 \newtheorem{theorem}{Theorem}[section]
 \newtheorem{corollary}[theorem]{Corollary}
 \newtheorem{lemma}[theorem]{Lemma}
  \newtheorem{conjecture}[theorem]{Conjecture}
 \theoremstyle{definition}
 \theoremstyle{remark}
 \newtheorem{remark}[theorem]{Remark}
 \newtheorem{example}{Example}
 \numberwithin{equation}{section}
\begin{document}

\title{Difference Covering Arrays and Pseudo-\\Orthogonal Latin Squares}

\author{Fatih Demirkale\\
\small\tt fatih.demirkale@uqconnect.edu.au\\
\small\tt Department of Mathematics,\\
\small\tt  Ko\c{c} University, Sar{\i}yer, 34450, \.{I}stanbul, Turkey\\
\\
Diane M. Donovan  \\
\small\tt dmd@maths.uq.edu.au\\
\small\tt Centre for Discrete Mathematics and Computing,\\
\small\tt University of Queensland, St Lucia 4072 Australia \\
\\
Joanne Hall\\
\small\tt j42.hall@qut.edu.au\\
\small Department of Mathematics\\
\small Queensland University of Technology\\
\small Qld, 4000\\
\small\tt j42.hall@qut.edu.au\\
\\
Abdollah Khodkar\\
\small \tt akhodkar@westga.edu\\
\small \tt Department of Mathematics\\
\small University of West Georgia\\
\small Carrollton, GA 30118, USA\\
\\
Asha Rao\\
\small\tt asha@rmit.edu.au\\
\small School of Mathematical  and Geospacial Sciences\\
\small RMIT University\\
\small Vic 3000, Australia
}
\date{January 19, 2015}
\maketitle

%\subjclass {05B15}

%\keywords{Difference covering arrays, Latin squares, pseudo-orthogonal Latin squares, mutually nearly orthogonal Latin squares}

%\cortext[cor]{corresponding author}

\maketitle

\begin{abstract}
Difference arrays are used in applications such as software testing, authentication codes and data compression.  Pseudo-orthogonal Latin squares are used in experimental designs.   A special class
of pseudo-orthogonal Latin squares are the mutually nearly orthogonal Latin squares (MNOLS) first discussed in 2002, with general constructions given in 2007. In this paper we
develop row complete MNOLS from difference covering arrays. We will use this connection to settle the spectrum question for sets of 3 mutually pseudo-orthogonal Latin squares of even order, for all but the order 146.
\end{abstract}

\section{Introduction}

Difference matrices are a fundamental tool used in the construction of combinatorial objects,   generating a significant body of research that has identified a number of existence constraints. These difference matrices have been used for diverse applications, for instance, in the construction of authentication codes without secrecy \cite{Stinson}, software testing \cite{CDFP1},\cite{CDPP2} and data compression \cite{Korner}.
This diversity of applications, coupled with  existence constraints, has motivated authors to generalise the definition to holey difference matrices,  difference covering arrays and difference packing arrays, to mention just a few.

 In the current paper  we are interested in constructing subclasses of cyclic difference covering arrays and exploiting these  structures to emphasize new connections with other  combinatorial objects, such as  pseudo-orthogonal Latin squares. We use this connection to settle
  the existence spectrum for sets of 3 mutually pseudo-orthogonal Latin squares of even order, in all but one case. We begin with the formal definitions.

A {\em difference matrix} (DM) over an abelian group $(G,+)$ of order $n$ is defined to be an $n\times k$ matrix $Q=[q(i,j)]$ with entries from $G$  such that,
  for all pairs of  columns  $0\leq j,j^\prime\leq k-1$, $j\neq j^\prime$,  the difference set
$$
\Delta_{j,j^\prime}=\{q(i,j)-q(i,j^\prime) \mid 0\leq i\leq n-1\}
$$
contains every element of $G$ equally often, say $\lambda$ times. (See, for instance, \cite{colbourn}, \cite{Ge} and \cite{HSS}.) Note that we label the rows from $0$ to $n-1$ and the columns $0$ to $k-1$. Also  to be consistent with later sections involving  Latin squares and covering arrays  our definition uses the transpose of the matrix given in \cite{colbourn} and \cite{Ge}. Since the addition of a constant vector, over $G$,  to all rows and a constant vector to any column does not alter the set $\Delta_{j,j^\prime}$,  we may assume that one row and one column contain only $0$, the identity element of $G$. More precisely, to simplify later calculations, we will assume that   all entries in the last row and last column of $Q$ are $0$.
A difference matrix will be denoted DM$(n,k;\lambda)$. If $(G,+)$ is the cyclic group we refer to a {\em cyclic difference matrix}.

\begin{theorem}\cite[Thm 17.5, p 411]{colbourn} A DM$(n,k;\lambda)$ does not exist if $k>\lambda n$.
\end{theorem}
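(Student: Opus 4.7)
The plan is to use a short character-theoretic orthogonality argument on the columns of $Q$. I would fix any non-trivial character $\chi:G\to\mathbb{C}^{*}$ of the abelian group $G$ and attach to each column index $j\in\{0,\ldots,k-1\}$ the complex vector
\[
\mathbf{v}_{j}^{\chi} \;=\; \bigl(\chi(q(i,j))\bigr)_{0\le i\le n-1} \;\in\;\mathbb{C}^{n}.
\]
The aim is to show that these $k$ vectors are mutually orthogonal and nonzero, hence linearly independent, forcing $k\le n$.

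Next I would evaluate the Hermitian inner product for $j\ne j'$. Using $\overline{\chi(a)}=\chi(-a)$ together with multiplicativity gives
\[
\langle\mathbf{v}_{j}^{\chi},\mathbf{v}_{j'}^{\chi}\rangle \;=\; \sum_{i=0}^{n-1}\chi\bigl(q(i,j)-q(i,j')\bigr),
\]
and the defining DM property tells me that the multiset $\Delta_{j,j'}$ contains every element of $G$ with the same multiplicity $\lambda$, so the sum collapses to $\lambda\sum_{g\in G}\chi(g)=0$ because $\chi$ is non-trivial. Since $\|\mathbf{v}_{j}^{\chi}\|^{2}=n$, none of the vectors is zero.

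Having produced $k$ pairwise orthogonal nonzero vectors in $\mathbb{C}^{n}$, I would conclude $k\le n$. Because the multiset $\Delta_{j,j'}$ has exactly $n$ entries and yet contains each of the $n$ elements of $G$ with multiplicity $\lambda$, the definition forces the parameter relation $n=\lambda n$, so the inequality reads $k\le\lambda n$ exactly as stated. The one conceptually substantive move is reinterpreting the DM axiom as orthogonality after applying a character; the remainder is routine linear algebra. A purely combinatorial alternative is also available, namely to normalize the last row and column of $Q$ to $0$ and observe that in any other row the $k-1$ entries in columns $0,\ldots,k-2$ must be distinct and nonzero (since the unique row in which any two columns agree is the all-zero one), giving $k-1\le n-1$ at once; this variant adapts unchanged to the standard alternative convention in which $Q$ has $\lambda n$ rather than $n$ rows.
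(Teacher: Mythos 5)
The paper offers no proof of this statement at all---it is quoted from the Handbook of Combinatorial Designs---so there is nothing to compare against except the standard literature. Your main argument is correct and is essentially the classical character-sum proof of the Bose--Bush-type bound: for a fixed non-trivial character $\chi$ of $G$ the inner product computation $\langle\mathbf{v}_{j}^{\chi},\mathbf{v}_{j'}^{\chi}\rangle=\sum_{i}\chi(q(i,j)-q(i,j'))=\lambda\sum_{g\in G}\chi(g)=0$ is right, the vectors have norm squared equal to the number of rows and so are nonzero, and $k$ pairwise orthogonal nonzero vectors in $\mathbb{C}^{(\text{rows})}$ force $k\le(\text{rows})=\lambda n$. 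You are also right to flag the quirk that the paper's definition literally fixes the matrix at $n$ rows, which forces $\lambda=1$; under the standard convention of $\lambda n$ rows your argument gives the stated bound for all $\lambda$. (The only degenerate case is $|G|=1$, where no non-trivial character exists, but the theorem is vacuous or false there anyway and is universally stated with $n\ge 2$.)

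One aside in your last sentence is not right, though it does not affect the main proof. The combinatorial variant---normalize a row to zero and argue that in every other row the entries of columns $0,\dots,k-2$ are distinct and nonzero---is valid only for $\lambda=1$, because it rests on the difference $0$ occurring exactly once in each $\Delta_{j,j'}$. It does \emph{not} ``adapt unchanged'' to $\lambda n$ rows: for $\lambda>1$ the difference $0$ occurs $\lambda$ times, so two columns may agree in $\lambda-1$ rows besides the normalized one, and the distinctness argument collapses. Indeed it must collapse, since it would prove $k\le n$, which is false in general for $\lambda>1$ (generalized Hadamard matrices realize $k=\lambda n>n$). If you want a proof covering all $\lambda$, keep the character argument and drop the claim that the counting variant generalizes.
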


In the main, we will use difference matrices with $k= 4$, $\lambda=1$ and where possible we will work with cyclic difference matrices. In Section \ref{sc:spec} we list a number of existence results that will be relevant to the current paper.

A {\em holey difference matrix} (HDM) over an abelian group $(G,+)$ of order $n$ with a subgroup $H$ of order $h$ is defined to be an $(n-h)\times k$ matrix $Q=[q(i,j)]$ with entries from $G$  such that,
  for all pairs of  columns  $0\leq j,j^\prime\leq k-1$, $j\neq j^\prime$,  the difference set
$$
\Delta_{j,j^\prime}=\{q(i,j)-q(i,j^\prime) \mid 0\leq i\leq n-h-1\}
$$
contains every element of $G\setminus H$ equally often, say $\lambda$ times. A holey difference matrix will be denoted HDM$(k,n;h)$, where $|G|=n$ and $|H|=h$. If $G$ is the cyclic group then we refer to a {\em cyclic holey difference matrix}.

\begin{remark}\label{rem}
As before  a constant vector may be added to any column without affecting $\Delta_{j,j^\prime}$ so we may assume that all entries in the last column of $Q$ are equal to $0$. However since $H$ is a subgroup,  $0$ belongs to the hole. Consequently $0$ does not occur in $\Delta_{j,j^\prime}$, and thus there will be no row containing two or more $0$'s. Further since $\Delta_{j,k-1}=G\setminus H$, $0\leq j\leq k-2$, the entries of $H$ do not occur in the first $k-1$ columns of $Q$.
\end{remark}

 A {\em difference covering (packing) array} over an abelian group $(G,+)$ of order $n$ is defined to be an $\eta\times k$ matrix $Q=[q(i,j)]$ with entries from $G$  such that,
  for all pairs of distinct columns  $0\leq j,j^\prime\leq k-1$,  the difference set
$$
\Delta_{j,j^\prime}=\{q(i,j)-q(i,j^\prime) \mid 0\leq i\leq \eta-1\}
$$
contains every element of $G$ at least (at most) once. (See, for instance, \cite{Yin1} and \cite{Yin2}.) A difference covering array will be denoted  DCA$(k,\eta;n)$ and a difference packing array will be denoted DPA$(k,\eta;n)$.
  If $(G,+)$ is the cyclic group, then the difference covering (packing) array is said to be {\em cyclic}.

   Difference covering arrays have been studied in their own right and are related to mutually orthogonal partial Latin squares and transversal coverings, with applications  in information technology, see \cite{KAG} and \cite{SMM}.

As before we may assume that the last row and last column of a DCA$(k,\eta;n)$ contain only 0.

 In the papers \cite{Yin1} and \cite{Yin2}, Yin constructs cyclic DCA$(4,n+1;n)$ for all even integers $n$, with similar results for cyclic difference packing arrays. Yin  documents a number of product constructions for difference covering arrays, some of which will be reviewed in Section \ref{sc:spec} and then adapted to construct difference covering arrays with specific properties; properties that build connections with pseudo-orthogonal Latin squares.

 The additional properties that we  seek are that $0$ (the entry relating to identity element of $G$) occurs at least twice in each column of the  DCA$(k,n+1;n)$ and for pairs of  columns, not including the last column, the repeated difference is not the element $0$.
 Formally we are interested in    DCA$(k,n+1;n)$, $Q=[q(i,j)]$, ($0\leq i\leq n$, $0\leq j\leq k-1$) satisfying the properties:
\begin{itemize}
\item[{\bf P1.}]  the entry $0\in G$ occurs at least twice in each column of $Q$, and
\item[{\bf P2.}] for all pairs of distinct columns $j$ and $j^\prime$, $j\neq k-1\neq j^\prime$, $\Delta_{j,j^\prime}=\{q(i,j)-q(i,j^\prime)\mid 0\leq i\leq n-1\}=G\setminus\{0\},$
\end{itemize}
Note that this last property implies that $\Delta_{j,j^\prime}$ contains a repeated difference that is not $0$.

The following example, of cyclic DCA$(4, 7;6)$ that satisfies  P1 and P2, is taken from \cite{RSS}.
$$
B^T=\left[\begin{array}{ccccccc}
0&1&2&3&4&5&0\\
1&3&5&0&2&4&0\\
3&0&4&1&5&2&0\\
0&0&0&0&0&0&0
\end{array}\right]
$$

In the next lemma we show that if $G$ is the cyclic group over ${\mathbb Z}_n$, then these conditions imply that for all distinct columns $j$ and $j^{\prime}$, $j\neq k-1\neq j^\prime$,
$$
\Delta_{j,j^\prime}=\{0, 1, 2,\dots,n/2,n/2,\dots, n-1\}
$$
with repetition retained.

\begin{lemma}\label{diff} If there exists a  cyclic  DCA$(k,n+1;n)$, $Q=[q(i,j)]$, ($0\leq i\leq n$, $0\leq j\leq k-1$) satisfying Properties P1 and P2,
 then $n$ is even. Further, given $d_0$ such that  $d_0=q(i,j)-q(i,j^\prime)=q(i^\prime,j)-q(i^\prime,j^\prime)$, for  $i\neq i^\prime$  and  $k-1\neq j\neq j^\prime\neq k-1$,    then $d_0 = n/2$.
\end{lemma}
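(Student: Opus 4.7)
The plan is to compute the sum $\sum_{i=0}^{n-1}\bigl(q(i,j)-q(i,j')\bigr)$ in two different ways and derive a congruence that both forces $n$ to be even and pins down $d_0$.

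First I would establish that, for each column index $j \neq k-1$, the entries in rows $0$ through $n-1$ of column $j$ form a permutation of $\mathbb{Z}_n$. Under the standing normalization that the last column of $Q$ is identically $0$, the DCA covering property applied to the pair $(j,k-1)$ says that the multiset $\{q(i,j) : 0 \leq i \leq n\}$ hits every element of $\mathbb{Z}_n$, and since this multiset has $n+1$ entries exactly one element is doubled. Row $n$ already contributes one $0$, and P1 supplies at least one more $0$ in rows $0$ through $n-1$; a short count then shows that a third $0$ would leave too few non-zero entries to cover $\mathbb{Z}_n \setminus \{0\}$. Hence the first $n$ entries of column $j$ contain $0$ exactly once and every non-zero residue of $\mathbb{Z}_n$ exactly once.

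Given this permutation property, both $\sum_{i=0}^{n-1} q(i,j)$ and $\sum_{i=0}^{n-1} q(i,j')$ equal $\sum_{g \in \mathbb{Z}_n} g$, so their difference vanishes in $\mathbb{Z}_n$. On the other hand, since the multiset $\Delta_{j,j'}$ has $n$ entries while its support $\mathbb{Z}_n \setminus \{0\}$ has $n-1$ elements, a quick multiplicity argument shows that exactly one residue $d_0 \in \mathbb{Z}_n \setminus \{0\}$ is doubled, so
$$\sum_{i=0}^{n-1}\bigl(q(i,j)-q(i,j')\bigr) \;=\; d_0 + \sum_{g \in \mathbb{Z}_n} g$$
in $\mathbb{Z}_n$. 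Equating the two expressions yields $d_0 \equiv -\sum_{g \in \mathbb{Z}_n} g \pmod{n}$.

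To close the argument I would observe that $\sum_{g \in \mathbb{Z}_n} g = n(n-1)/2$ is $\equiv 0 \pmod n$ when $n$ is odd and $\equiv n/2 \pmod n$ when $n$ is even. The odd case forces $d_0 = 0$, which contradicts P2 (which excludes $0$ from $\Delta_{j,j'}$), so $n$ must be even and $d_0 = n/2$. The main obstacle is the first step: it is essential that P1 combines with the DCA covering property to pin each relevant column down to a genuine permutation, since the column-sum cancellation in the second step depends on that fact.
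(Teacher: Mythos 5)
Your proposal is correct and follows essentially the same route as the paper: establish via P1 and the all-zero last column that each relevant column restricted to rows $0$ through $n-1$ is a permutation of $\mathbb{Z}_n$, then double-count $\sum_{i=0}^{n-1}\bigl(q(i,j)-q(i,j')\bigr)$ modulo $n$ to force $d_0\equiv -n(n-1)/2$, which rules out odd $n$ and gives $d_0=n/2$. Your write-up is in fact slightly cleaner than the paper's, which invokes a ``standard form'' normalization of column $0$ that your symmetric column-sum cancellation makes unnecessary.
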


\begin{proof} Let $Q=[q(i,j)]$ ($0\leq i\leq n$, $0\leq j\leq k-1$) represent the difference covering array. The definition requires that ${\mathbb Z}_{n}\subseteq \Delta_{j,j^\prime}$ and since column $k-1$ of $Q$ contains all zeros, Property P1 implies that the remaining columns are permutations of the multi-set $\{0,0, 1, 2,\dots, n-1\}$.

 Let $d_0\in {\mathbb Z}_n\setminus \{0\}$ represent the repeated difference in $\Delta_{j,j^\prime}$. Suppose $n$ is odd and, without loss of generality, that column 0 is in standard form. Then, for all $0< j \leq k-2$, $ \sum_{i=0}^{n-1}q(i,j)=\frac{(n-1)n}{2}$ and
$$
 \sum_{i=0}^{n-1}(i-q(i,j))\equiv \frac{(n-1)n}{2}+d_0 \mod n.
$$
Consequently
$
2d_0=n(2u-n+1),$
or equivalently $n|2d_0$. But since $n$ is odd, this leads to the contradiction, $d_0\in {\mathbb Z}_n$ and $n|d_0$. Thus $n$ is $2p$ for some integer $p$, where $p$ divides $d_0$, implying $d_0=p$.

\end{proof}

The remainder of this paper is organised as follows. In Section \ref{Latinsquare} we will draw the connection between DCA$(k,n+1;n)$ and sets of mutually pseudo-orthogonal Latin squares and for a subclass of squares settle the spectrum question for all but a single order, namely 146. In Section \ref{sc:spec} we review some of the general constructions for difference covering arrays and show that these constructions can be used to construct DCA$(k,n+1;n)$ that satisfy Properties P1 and P2.
In Section \ref{constructions} we give three new constructions for DCA$(4,n+1;n)$'s and consequently new families of
 mutually pseudo-orthogonal Latin squares.

The notation  $[a,b]=\{a,a+1,\dots, b-1,b\}$ refers to  the closed interval of integers from $a$ to $b$.

\section{Pseudo-orthogonal Latin squares and  difference covering arrays}\label{Latinsquare}

In this section we verify that   cyclic difference covering arrays can be used to construct pseudo-orthogonal Latin squares.

A {\em Latin square} of order $n$  is an $n\times n$ array  in which each of the symbols of ${\mathbb Z}_n$ occurs once in every row and once in every column.
 Two Latin squares $A=[a(i,j)]$ and $B=[b(i,j)]$, of order $n$, are said to be {\em orthogonal} if
$$
O=\{(a(i,j),b(i,j))\mid 0\leq i,j\leq n-1\}={\mathbb Z}_n\times {\mathbb Z}_n.
$$
A set of $t$ Latin squares is said to be {\em mutually orthogonal}, $t$-MOLS$(n)$, if they are pairwise orthogonal.
A set of  $t$ {\em idempotent} MOLS$(n)$, denoted  $t$-IMOLS$(n)$, is a set of $t$-MOLS$(n)$ each of which is idempotent; that is, the cell $(i,i)$ contains the entry $i$, for all $0\leq i\leq n-1$.

It is well known that difference matrices can be used to construct sets of mutually orthogonal Latin squares, see for instance  \cite[Lemma 6.12]{HSS}.

While the applications of orthogonal Latin squares are well documented,  there are still many significant existence questions   unanswered. For
instance,   it is known that there is no pair of
MOLS(6), however it  is not known if there exists a set of three MOLS(10),
 or  four MOLS(22), see \cite{colbourn}. The existence of a set of  four MOLS(14) was established by  Todorov \cite{todorov} in 2012, but it is not known if there exists a set of five MOLS(14). Many of the existence results have been obtained using  quasi-difference matrices  or  difference matrices with holes, see \cite{colbourn}.

The importance and applicability of MOLSs combined with these difficult open questions has motivated authors, such as  Raghavarao, Shrikhande and Shrikhande \cite{RSS} and Bate and Boxall \cite{BB},  to slightly vary the  orthogonality condition  to that of pseudo-orthogonal. A pair of Latin squares, $A=[a(i,j)]$ and $B=[b(i,j)]$, of order $n$, is said to be {\em pseudo-orthogonal} if
given
$O=\{(a(i,j),b(i,j))\mid 0\leq i,j\leq n-1\}$,
 for all $a\in {\mathbb Z}_n$
$$
|\{(a,b(i,j))\mid (a,b(i,j))\in O\}|=n-1.
$$
That is, each symbol in $A$ is paired with every symbol in $B$ precisely once, except for one symbol with which it is paired twice and one symbol with which it is not paired at all. A set of $t$ Latin squares, of order $n$, are said to be mutually pseudo-orthogonal if they are pairwise pseudo-orthogonal.

The value and applicability  of pseudo-orthogonal Latin squares  has been established through applications to multi-factor crossover designs in animal husbandry \cite{BB}, and strongly regular graphs \cite{BHS} (though the definition varies here). {\em Mutually nearly orthogonal Latin squares} (MNOLS) are a  special class of pseudo-orthogonal Latin squares, in that the set $O$ does not contain the pair $(a,a)$, for any $a\in{\mathbb Z}_n$. Mutually nearly orthogonal Latin squares (MNOLS) were first discussed in  a paper by   Raghavarao,   Shrikhande and  Shirkhande in 2002 \cite{RSS}.

A natural question to ask is: Can we use difference techniques to construct mutually pseudo-orthogonal Latin squares?
 Raghavarao,   Shrikhande  and  Shirkhande  did precisely this and constructed  mutually pseudo-orthogonal Latin squares from cyclic DCA$(k,n+1;n)$ termed $(k,n)$-difference sets in \cite{RSS}. The  Raghavarao,   Shrikhande  and  Shirkhande result is as follows.

 \begin{theorem}
If there exists a cyclic DCA$(t+1, 2p+1;2p)$, $Q^\prime=[q^\prime(i,j)]$, that satisfies  P1 and P2, then there exists a set of $t$ pseudo-orthogonal Latin squares of order $2p$.
\end{theorem}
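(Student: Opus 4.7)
The plan is to construct the required $t$ Latin squares by additively developing the first $t$ columns of $Q'$ over $\mathbb{Z}_{2p}$. Specifically, for each $j$ with $0 \le j \le t-1$ I would define an array $L_j$ of order $2p$ by
\[
L_j(i,c) \;=\; q'(i,j) + c \pmod{2p}, \qquad 0 \le i,\, c \le 2p-1,
\]
discarding the all-zero last row of $Q'$ and using the remaining rows as the rows of each $L_j$.

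First I would verify the Latin-square property. Fixing the row index $i$ and varying $c$ produces a coset of $\mathbb{Z}_{2p}$, so every row of $L_j$ is a permutation. For the column condition I would appeal to the observation already extracted in the proof of Lemma~\ref{diff}: Property P1, together with the all-zero last row of $Q'$, forces the entries in rows $0,1,\ldots,2p-1$ of each column $j<t$ to be a permutation of $\mathbb{Z}_{2p}$. Hence fixing $c$ and varying $i$ also sweeps through every residue, so each $L_j$ is a Latin square.

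Next, to establish pseudo-orthogonality of $L_j$ and $L_{j'}$ with $j \ne j'$ and both indices less than $t$, I fix a symbol $a \in \mathbb{Z}_{2p}$ and count. For each row $i$ the equation $L_j(i,c)=a$ has the unique solution $c = a - q'(i,j)$, so $a$ appears in $L_j$ exactly $2p$ times, once per row, and the corresponding entry of $L_{j'}$ equals $a - (q'(i,j) - q'(i,j'))$. As $i$ ranges over $\{0,\ldots,2p-1\}$, the multi-set of these paired entries is the additive translate $a - \Delta_{j,j'}$. Property P2 identifies $\Delta_{j,j'}$ as a set with $\mathbb{Z}_{2p}\setminus\{0\}$, while Lemma~\ref{diff} identifies its repeated element as $p$. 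Consequently the multi-set of paired values is $\mathbb{Z}_{2p} \setminus \{a\}$ with $a-p$ occurring twice, which is exactly the pseudo-orthogonality condition: the symbol $a$ in $L_j$ is paired with every symbol except $a$ itself, and is paired twice with $a+p$.

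The argument contains no hard step beyond invoking Lemma~\ref{diff}; the main obstacle is bookkeeping rather than difficulty. Everything rests on the fact that the unique repeated difference in $\Delta_{j,j'}$ is the order-two element $p$, since this is precisely what makes the translate $a-\Delta_{j,j'}$ miss the single value $a$ and duplicate the single value $a+p$, matching the count $|\{(a,\cdot)\}\cap O| = n-1$ in the definition of pseudo-orthogonality. The only real care needed is in separating the role of the $t$ non-trivial columns of $Q'$, which yield the Latin squares, from the all-zero column $t$, which simply provides the reference frame used to control the difference structure.
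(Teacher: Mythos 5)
Your construction $L_j(i,c)=q'(i,j)+c \pmod{2p}$ is exactly the paper's construction (its Equation (2.1)), and your verification — columns of $Q'$ minus the all-zero row are permutations of $\mathbb{Z}_{2p}$, and the superimposed pairs for symbol $a$ form the translate $a-\Delta_{j,j'}$ with the repeated difference $p$ supplied by Lemma \ref{diff} — is the same argument the paper gives, merely phrased as a per-symbol count rather than as the global pair set $(\mathbb{Z}_{2p}\times\mathbb{Z}_{2p}\setminus\{(x,x)\})\cup\{(x,x+p)\}$. The proposal is correct and takes essentially the same approach as the paper.
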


\begin{proof} Recall that without loss of generality  we may assume that the last row and column of $Q^\prime$ contain all zeros.  Construct a new matrix  $Q=[q(i,j)]$ by removing the last row and last column from $Q^\prime$ and  define a set of $t$ arrays, $L_s=[l_s(i,j)]$, $0\leq s\leq t-1$, of order $2p$, by
\begin{equation}\label{cyclic}
l_s(i,j)=q(i,s)+j(\mbox{mod }2p),\ 0\leq i,j\leq 2p-1.
\end{equation}
It is easy to see that each column of  $L_s$ is a permutation of ${\mathbb Z}_{2p}$  and so $L_s$ is a Latin square. By Lemma \ref{diff}
$$
\Delta_{j,j^\prime}=\{q^\prime(i,j)-q^\prime(i,j^\prime) \mid 1\leq i\leq 2p\}=({\mathbb Z}_{2p}\setminus \{0\})\cup \{p\}
$$
  implying that when any two Latin squares are superimposed we obtain the set of ordered pairs $(\{{\mathbb Z}_{2p}\times {\mathbb Z}_{2p}\}\setminus\{(x,x)\mid 0\leq x\leq 2p-1\})
\cup \{(x,x+p)\mid 0\leq x\leq 2p-1\}$ with repetition retained.

\end{proof}

If there exists a pair of pseudo-orthogonal Latin squares generated from cyclic difference covering arrays satisfying P1 and P2, then there exists a pair of nearly orthogonal Latin squares. Conversely,
a pair of nearly orthogonal Latin squares are necessarily pseudo-orthogonal Latin squares.
Given this and the strong connection with papers \cite{LvR} and \cite{RSS} we will state all results in terms of mutually nearly orthogonal Latin squares.

 Raghavarao,   Shrikhande  and  Shirkhande established bounds on the maximum number of Latin squares in a set of mutually nearly orthogonal Latin squares. This result provides bounds on $k$ for DCA$(k,n+1;n)$ that satisfy P1 and P2.

 \begin{lemma} Let $p\geq 2$ be a positive integer. If there exists a cyclic DCA$(k+1, 2p+1;2p)$ that satisfies P1 and P2, then $k\leq p+1$. Further if $p$ is even and there exists a DCA$(k, 2p+1;2p)$, then $k<p+1$.
 \end{lemma}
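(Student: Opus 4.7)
The plan is to run a simple double-count on column-value incidences in the $k$ non-last columns of the DCA, exploiting the fact from Lemma~\ref{diff} that the doubled difference in every $\Delta_{s,s'}$ is forced to be $p$. With the last row and last column of $Q=[q(i,j)]$ normalised to $0$, property P1 combined with the covering condition $\mathbb Z_{2p}\subseteq\Delta_{s,k}$ forces each non-last column, restricted to rows $0,\ldots,2p-1$, to be a permutation of $\mathbb Z_{2p}$; and P2 together with Lemma~\ref{diff} guarantees that for any two distinct non-last columns $s,s'$, the multiset $\Delta_{s,s'}$ over those rows equals $\mathbb Z_{2p}\setminus\{0\}$ with $p$ doubled.

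For $i\in\{0,\ldots,2p-1\}$ and $v\in\mathbb Z_{2p}$, I would define $N_{i,v}=|\{s\in\{0,\ldots,k-1\}:q(i,s)=v\}|$. Since P2 forbids two non-last columns from sharing a value in a common row (otherwise $0\in\Delta_{s,s'}$), each $N_{i,v}\in\{0,1\}$ and $\sum_v N_{i,v}=k$, giving
\[
\sum_{i,v}N_{i,v}^{2}=\sum_{i,v}N_{i,v}=2pk.
\]
In parallel, $\sum_{i,v}N_{i,v}N_{i,v+p}$ rewrites as the number of ordered triples $(i,s,s')$ with $q(i,s)-q(i,s')=p$ (using $-p=p$ in $\mathbb Z_{2p}$); by Lemma~\ref{diff} each of the $k(k-1)$ ordered pairs of distinct non-last columns supplies exactly two such rows, so this sum equals $2k(k-1)$.

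Expanding the non-negative quantity
\[
0\leq \sum_{i,v}(N_{i,v}-N_{i,v+p})^{2}=2\cdot 2pk-2\cdot 2k(k-1)=4k(p-k+1)
\]
immediately yields $k\leq p+1$. For the parity refinement when $p$ is even, I would note that equality $k=p+1$ would force $N_{i,v}=N_{i,v+p}$ for every $i,v$; then for each $i$ the support $\{v:N_{i,v}=1\}$ is invariant under the fixed-point-free involution $v\mapsto v+p$ of $\mathbb Z_{2p}$ and hence has even cardinality, but that cardinality equals $\sum_v N_{i,v}=k=p+1$. So $p+1$ must be even, i.e., $p$ odd; for $p$ even the inequality is strict, giving $k<p+1$. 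The only real obstacle is choosing the right pair of second-moment sums to juxtapose; once that choice is made, the rest is a direct expansion.
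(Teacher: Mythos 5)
Your argument is correct, and it takes a genuinely different route from the paper's. The paper disposes of the lemma in one line: a cyclic DCA$(k+1,2p+1;2p)$ satisfying P1 and P2 yields $k$ mutually nearly orthogonal Latin squares of order $2p$ by the earlier construction theorem, and the bound then follows by citing the Raghavarao--Shrikhande--Shrikhande upper bound on the size of such a set (with its parity refinement supplying the strict inequality). You instead prove the bound directly at the level of the array: the normalisation and P1 force each non-last column to be a permutation of ${\mathbb Z}_{2p}$ on the first $2p$ rows, P2 forces $N_{i,v}\in\{0,1\}$, Lemma~\ref{diff} pins the doubled difference at $p$ so that $\sum_{i,v}N_{i,v}N_{i,v+p}=2k(k-1)$, and nonnegativity of $\sum_{i,v}(N_{i,v}-N_{i,v+p})^{2}=4k(p-k+1)$ gives $k\le p+1$, with the equality analysis via the fixed-point-free involution $v\mapsto v+p$ yielding the parity refinement. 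This is in effect a self-contained, DCA-level reconstruction of the RSS incidence-matrix bound; it buys independence from the external citation and from the Latin-square construction, at the cost of a paragraph of counting. One caveat: as literally printed, the second claim concerns a DCA$(k,2p+1;2p)$ (one fewer column, with P1, P2 and cyclicity not restated), for which your argument, applied to its $k-1$ non-last columns, only gives $k\le p+1$ rather than $k<p+1$. The paper's own one-line proof suffers from the same mismatch, so this is evidently a typographical slip for a cyclic DCA$(k+1,2p+1;2p)$ satisfying P1 and P2, and under that reading your equality analysis delivers exactly the stated strict inequality.
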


 \begin{proof} If $k>p+1$ then there exists a set of more than $(p+1)$-MNOLS$(2p)$, which contradicts Raghavarao,   Shrikhande  and  Shrikhande result. Similarly for the second statement.
 
 \end{proof}

\subsection{ Some interesting facts}

It is also interesting to note that the MNOLS$(2p)$ constructed from cyclic difference covering arrays are essentially copies of the cyclic group. Consequently, these Latin squares are all {\em bachelor squares}, in that they have no orthogonal mate.

We also note that these sets of Latin squares are row complete. A {\em row complete} Latin square, $L=[l(i,j)]$ is one in which the columns can be reordered in such a way that the set
$\{(l(i,j),l(i,j+1)\mid i \in {\mathbb Z}_n, 0\leq j \leq n-1\}={\mathbb Z}_n\times {\mathbb Z}_n$. So the set of entries obtained by taking pairs of adjacent cells in the same row, for all rows, gives the set of all ordered pairs on ${\mathbb Z}_n$.

Williams \cite{W} verified that the columns of the Latin square corresponding to the Cayley table of the cyclic group can be rearranged to obtain a row complete Latin square. %In particular Williams showed that when the columns of the cyclic Latin square, of order $n=2p$ and denoted $C=[c(i,j)]$ where $c(i,j)=i+j\mod n$ are reordered to
%\begin{align*}
%0\ \;\; 1\ \;\;n-1\ \;\;2\ \;\; n-2\ \;\; 3\ \;n-3\ \; \dots \ \;\; n/2
%\end{align*}
%we obtain a row complete Latin square.

Each of the $k$ MNOLS$(2p)$ constructed from cyclic DCA$(k+1, 2p+1;2p)$ can be obtained by reordering the rows of the Cayley table of the cyclic group, without touching the columns. Hence simultaneously reordering the columns of these nearly orthogonal Latin squares will also produce row complete pseudo-orthogonal Latin squares.

\section{The spectrum for sets of 3 mutually nearly orthogonal   Latin squares}\label{sc:spec}

In 2007,   Li   and  van Rees  \cite{LvR} continued the study of $3$-MNOLS$(n)$ and conjectured that they exist for all even $n\geq 6$. In a partial answer to this question, Li and van Rees proved the existence for small orders and  orders greater than 356,  (see also \cite{PR}).

\begin{theorem}\cite[Thm 4.8]{LvR}
If $2p\geq 358$, then there exists a $3$-MNOLS$(2p)$.
\end{theorem}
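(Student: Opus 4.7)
The plan is to construct, for every even $n = 2p \ge 358$, a cyclic DCA$(4, 2p+1; 2p)$ meeting Properties P1 and P2; by the Raghavarao--Shrikhande--Shrikhande result recalled in Section \ref{Latinsquare}, this immediately yields a $3$-MNOLS$(2p)$. So the task reduces entirely to a difference-covering-array existence problem, which is amenable to the product and recursive techniques for difference matrices reviewed in Section \ref{sc:spec} (and developed in the cited work of Yin).

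First, I would assemble a finite list of \emph{seed} orders $m$ for which a cyclic DCA$(4, m+1; m)$ satisfying P1 and P2 is written down directly, typically as a few starter blocks over ${\mathbb Z}_m$. Property P1 (two zeros per non-trailing column) and Property P2 (the repeated difference in each pair of non-trailing columns is forced to be $m/2$ by Lemma \ref{diff}) can then be checked by inspection. The seeds should be chosen so that every residue class of $2p$ modulo some small base $b$ has a representative of controllable size.

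Second, I would employ a transversal-design / GDD recursion: take a TD$(4, q)$ (equivalently, two MOLS of order $q$), inflate each point by a seed value $m$, and fill the resulting groups using a seed DCA$(4, m+1; m)$ satisfying P1 and P2. The blocks of the TD supply all cross-group differences, while the inflated seeds supply the within-group differences together with the forced repeated value $m/2$. A careful choice of $q$, $m$, and possibly a small additive adjustment $r$ drawn from the seed list lets one realise any sufficiently large even $2p$ as $qm + r$ with $q$ in a regime where the required TDs are known to exist.

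The main obstacle will be preserving the strengthened properties P1 and P2 through the recursion, not merely the bare DCA axiom: one must track where the extra zeros land so that each column of the final array carries exactly two, and verify that the unique repeated non-zero difference in every pair of non-trailing columns remains $n/2$ after inflation. The numerical threshold $2p \ge 358$ then emerges as the first even integer beyond which the available seed list, combined with TD-existence data for the relevant values of $q$, covers every residue class; the smaller even orders must be handled by further direct constructions or ad hoc gluings, which are carried out separately in \cite{LvR} and in the new constructions of Section \ref{constructions}.
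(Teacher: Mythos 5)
Your reduction of the theorem to the existence of a cyclic DCA$(4,2p+1;2p)$ satisfying P1 and P2 for \emph{every} even $2p\ge 358$ is a genuine gap: that existence statement is strictly stronger than the theorem and is still open --- it is precisely the concluding conjecture of Section \ref{sc:spec}, and the paper explicitly records a long list of orders (64, 68, \dots, 356, as well as 146) for which no cyclic difference covering array with these properties could be found, falling back on non-DCA constructions there. The recursive step you propose (inflating a TD$(4,q)$ by seed arrays and tracking the extra zeros and the repeated difference $n/2$) is exactly the step that is not known to go through in the generality you need. The paper's recursions of this type (Lemma \ref{l:insert}, Corollary \ref{c:prod}, Lemma \ref{l:prod}) all require a cyclic DM$(n',4;1)$ or a cyclic HDM as an ingredient, and a cyclic DM$(n',4;1)$ over ${\mathbb Z}_{n'}$ cannot exist for $n'$ even; consequently these products only reach orders possessing a suitable odd factor and leave whole families (orders divisible by large powers of $2$, or with awkward factors of $3$ because of the gcd restrictions in Yin's theorems) out of reach. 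So no choice of seed list and TD parameters along your lines is currently known to cover all even $2p\ge 358$, and the threshold $358$ does not arise that way.

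The statement is in fact imported from \cite{LvR}, where it is proved by working directly at the level of Latin squares rather than difference arrays: one uses the product construction (Theorem \ref{vanrees1}: $k$-MNOLS$(2p)$ together with $k$-MOLS$(2p)$ and $k$-MOLS$(n)$ yield $k$-MNOLS$(2pn)$) and the GDD filling construction (Theorem \ref{vanrees2}: fill the groups of a suitable GDD with small MNOLS and the blocks with idempotent MOLS). A short list of small MNOLS for the group sizes, combined with standard existence results for GDDs and MOLS, then covers every even order from $358$ onward. That route sacrifices the cyclic and row-complete structure that the DCA approach delivers --- which is exactly why the present paper still pursues difference covering arrays wherever possible --- but it is what makes the bound $358$ attainable.
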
\emph{}

This work was extended in 2014, when Demirkale, Donovan and Khodkar \cite{DDK} developed further constructions for cyclic DCA$(4, 2p+1;2p)$ proving:

\begin{theorem}\cite{DDK}
There exist $3$-MNOLS$(2p)$, where $2p\equiv $ {\rm 14, 22, 38, 46 mod 48}.
\end{theorem}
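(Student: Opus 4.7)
The plan is to reduce the question, as usual in this setting, to constructing a cyclic DCA$(4,2p+1;2p)$ satisfying properties P1 and P2 for every $2p$ in the four prescribed residue classes. Given such an array, the theorem of Section \ref{Latinsquare} immediately converts it into a set of three row-complete nearly orthogonal Latin squares of order $2p$, so the entire burden is on the difference covering array side. Since the four residues $14,22,38,46$ mod $48$ are exactly the classes $2p\equiv 6\pmod 8$ with $\gcd(2p,6)=2$, the situation is one where generic DCA existence (Yin) is already known but the extra conditions P1 and P2 are \emph{not} supplied by Yin's construction, which is the source of difficulty.

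I would proceed by a two-layer strategy. First, fix one small \emph{seed} cyclic DCA$(4,m+1;m)$ satisfying P1 and P2 in each of the four residue classes mod $48$ (the smallest members are $2p=14,22,38,46$ themselves, small enough that either a direct construction in the style of the sample array $B^T$ in the excerpt, or a short computer search over the orbit of candidate generating vectors, will locate one). Second, use a product-type construction of the kind reviewed in Section \ref{sc:spec}: given a cyclic DCA$(4,m+1;m)$ on $\mathbb{Z}_m$ satisfying P1 and P2 together with a cyclic difference matrix DM$(2s,4;1)$ on $\mathbb{Z}_{2s}$, one builds a cyclic DCA$(4,2ms+1;2ms)$ on $\mathbb{Z}_{2ms}$ by the standard ``substitute the DM rows into each entry of the DCA'' recipe. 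Taking $2s$ to run over the large family of orders for which a DM$(2s,4;1)$ is available (in particular $2s$ coprime to $6$, which is the classical case), one sweeps through all $2p$ in each target residue class mod $48$. A couple of sporadic values not covered by this sweep can be mopped up with one additional ingredient (e.g.\ a holey DM) or by direct construction, since by Lemma \ref{diff} the repeated difference is forced to be the unique element of order $2$, which considerably constrains the search.

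The main obstacle is not existence of DCAs at these orders but the preservation of P1 and P2 under the product. P2 demands that every nonzero difference occurs and that the unique repeated difference in any pair of non-final columns is nonzero (and by Lemma \ref{diff}, equals half the order). Under a product of a DCA on $\mathbb{Z}_m$ with a DM on $\mathbb{Z}_{2s}$, the differences between non-final columns of the product lie in $\mathbb{Z}_{2ms}\cong \mathbb{Z}_m\times\mathbb{Z}_{2s}$ and can be analysed coordinate-wise: the $\mathbb{Z}_m$ component is governed by the seed DCA (giving every nonzero residue with $m/2$ repeated) while the $\mathbb{Z}_{2s}$ component is governed by the DM (giving every residue exactly once). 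One must check that the unique repeated pair is $(m/2,s)$, i.e.\ the order-two element of $\mathbb{Z}_{2ms}$, and that each column of the product still contains $0$ at least twice; the latter follows from the seed column having two zeros combined with the last row of the DM being zero. Once these two verifications are carried out, the theorem follows by case analysis on the four residues, using the corresponding seed array in each case.
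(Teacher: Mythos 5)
Your reduction to cyclic DCA$(4,2p+1;2p)$'s satisfying P1 and P2 is the right framework and is indeed how \cite{DDK} proceeds, but the recursive sweep you propose cannot work, for two reasons. First, the multiplier you want does not exist: a cyclic DM$(2s,4;1)$ over ${\mathbb Z}_{2s}$ of even order never exists (already a cyclic DM$(n,3;1)$ is equivalent to a complete mapping of ${\mathbb Z}_n$ and forces $n$ odd; note that in this paper only odd-order cyclic DM$(n,4;1)$'s are ever invoked, cf.\ Theorem \ref{prime-cdm} and Theorem \ref{3-cdm}). So in any product of the type of Lemma \ref{l:prod} the difference-matrix factor must be odd and the seed DCA must carry the entire even part of the order. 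Second, and fatally, even after that correction the reachable orders are exactly the products $m\cdot n$ with $m\in\{14,22,38,46\}$ and $n$ odd, i.e.\ only orders divisible by one of the four seeds. The residue classes $14,22,38,46\bmod 48$ contain infinitely many integers of the form $2q$ with $q$ prime, $q>23$ (e.g.\ $62=2\cdot 31$, $86=2\cdot 43$, $94=2\cdot 47$, $118=2\cdot 59$, $134=2\cdot 67$, \dots), and none of these is divisible by any seed. These are not ``a couple of sporadic values'': the product construction misses all but a thin sliver of each class, so essentially the whole theorem is left to the mopping-up step.

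What \cite{DDK} actually does (and what Theorem \ref{thm-6mu+4} of this paper does for the classes $10,22,34,46\bmod 48$) is a \emph{direct} parametric construction: explicit closed-form entries for the three nontrivial columns of the DCA as linear functions of the row index on a bounded number of subintervals, verified via the gcd and ``constant jump'' arguments of Section \ref{constructions} to make each column a permutation of ${\mathbb Z}_{2p}$ and each pairwise difference multiset equal to $({\mathbb Z}_{2p}\setminus\{0\})\cup\{p\}$. Your discussion of how P1 and P2 behave under a product (repeated difference forced to $p$ by Lemma \ref{diff}, zeros supplied by the all-zero rows) is correct as far as it goes and essentially reproduces Lemma \ref{l:prod}, but it addresses the easy part of the problem; the orders the product cannot reach are the substance of the theorem, and for them a direct construction is unavoidable.
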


The next result lists known values for  cyclic DCA$(4, 2p+1;2p)$ satisfying P1 and P2, with $2p\leq 356$.

\begin{lemma}
There exists  cyclic DCA$(4, 2p+1;2p)$ for $2p= $ {\rm 6, 8,}$\dots$, {\rm 20, 22, 38, 46, 62, 70, 86, 94,
110, 118, 134, 142, 158, 166, 182, 190, 206, 214, 230, 238, 254, 262, 278, 286, 302, 310, 326, 334,
350}.
\end{lemma}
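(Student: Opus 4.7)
The statement is essentially a bookkeeping claim: it asserts that certain small and medium orders lie in the known existence spectrum. My plan is therefore to verify the list by partitioning it into two groups and matching each group to an already-established construction.

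First, I would isolate the residue classes modulo 48 that appear. Tabulating the values gives
\begin{align*}
2p \equiv 14 \pmod{48} &:\ 14,\ 62,\ 110,\ 158,\ 206,\ 254,\ 302,\ 350,\\
2p \equiv 22 \pmod{48} &:\ 22,\ 70,\ 118,\ 166,\ 214,\ 262,\ 310,\\
2p \equiv 38 \pmod{48} &:\ 38,\ 86,\ 134,\ 182,\ 230,\ 278,\ 326,\\
2p \equiv 46 \pmod{48} &:\ 46,\ 94,\ 142,\ 190,\ 238,\ 286,\ 334.
\end{align*}
Every entry of the list other than $2p \in \{6,8,10,12,16,18,20\}$ falls into one of these four classes and is therefore handled directly by the Demirkale--Donovan--Khodkar theorem cited immediately above the lemma. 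The plan here is simply to write out this case-matching transparently so the reader can confirm that nothing is missed.

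Second, I would dispose of the remaining small orders $2p \in \{6,8,10,12,16,18,20\}$ by pointing to the explicit arrays already in the literature: the $2p=6$ case is the DCA$(4,7;6)$ displayed as the matrix $B^T$ in the introduction of the present paper, taken from Raghavarao--Shrikhande--Shrikhande \cite{RSS}, and the remaining small orders are given as explicit difference covering arrays in Li--van Rees \cite{LvR}. For each quoted array one must check Properties P1 and P2, but this is a routine inspection: P1 is visible from the presence of two zeros per column, and P2 follows from confirming that the multiset of pairwise differences on the first $n$ rows is $\{0,1,\dots,n/2,n/2,\dots,n-1\}$ with $0$ appearing only in differences involving the constant-zero column.

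The main (and essentially only) obstacle is simply thoroughness: the list must be exhausted and each cited reference must in fact produce a DCA satisfying both P1 and P2 (not merely a difference covering array in the loose sense). The cleanest presentation is therefore a short table listing each value of $2p$ in the lemma together with the precise prior source that supplies a DCA$(4,2p+1;2p)$ with Properties P1 and P2. No new combinatorial idea is required; the content of the lemma is a consolidation of known data to serve as the base for the recursive constructions developed later in Sections \ref{sc:spec} and \ref{constructions}.
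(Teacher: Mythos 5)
Your proposal is correct and follows essentially the same route as the paper: the paper's proof simply cites \cite{RSS} for orders $6$ and $8$, \cite{LvR} for $10,12,14,16,18,20$, and \cite{DDK} for all remaining values, which is exactly your partition (the only cosmetic difference being that you route $2p=14$ through the $14 \bmod 48$ family of \cite{DDK} rather than through \cite{LvR}). One small point of care: the Demirkale--Donovan--Khodkar result is quoted in the text as an existence statement for $3$-MNOLS, so when invoking it for this lemma you should cite the underlying cyclic DCA$(4,2p+1;2p)$ constructions of \cite{DDK} satisfying P1 and P2, as you correctly flag in your final paragraph.
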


\begin{proof} The existence of orders $6$ and $8$ was given in \cite{RSS} and orders 10, 12, 14, 16, 18, 20 in \cite{LvR}. All the remaining cases were shown to exist in \cite{DDK}.

\end{proof}

van Rees recently summarised these results and indicated that $3$-MNOLS$(2p)$ exist for all orders except possibly those given below.

\begin{lemma} \cite{priv-van-rees}\label{rem-spec} A set of $3$-MNOLS$(2p)$ exists except possibly when $2p=$
{\rm 24, 26, 28, 30, 34, 36, 42, 50, 52, 54, 58, 66, 74, 82, 92, 102, 106, 114, 116, 122, 124,
130, 138, 146, 148, 170, 172, 174, 178}.
\end{lemma}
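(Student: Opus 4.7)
The plan is to combine several existence results and complete the proof by a finite case-check. By the Li--van Rees result (Theorem~3.1 above), every even order $2p\ge 358$ already admits a $3$-MNOLS$(2p)$, so I only need to handle the finite window $6\le 2p\le 356$.

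Within this window I would assemble three layers of input. First, for the small base orders $2p\in\{6,8,10,12,14,16,18,20\}$, the explicit cyclic DCA$(4,2p+1;2p)$ satisfying P1 and P2 from \cite{RSS} and \cite{LvR} are converted to $3$-MNOLS$(2p)$ by the construction in Theorem~2.1. Second, the DDK congruence family (Theorem~3.2) adjoins every order with $2p\equiv 14,22,38,46\pmod{48}$, yielding in $[22,350]$ the set
\[
\{14,22,38,46,62,70,86,94,110,118,134,142,158,166,182,190,206,214,230,238,254,262,278,286,302,310,326,334,350\}.
\]
Third, I would feed these base orders into the product/recursive constructions for difference covering arrays reviewed in Section~\ref{sc:spec} (and extended in Section~\ref{constructions}), checking at each step that Properties P1 and P2 survive composition. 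This generates a $3$-MNOLS$(2p)$ whenever $2p$ factors as $n_1 n_2$ with $n_1$ already in hand and $n_2$ equipped with a suitable difference matrix (or HDM), picking up a large family of composite even orders such as $32,40,44,48,56,60,\dots$

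The proof then concludes by a purely mechanical audit: intersect the even integers in $[6,356]$ with the union of the sets produced in the three layers above, and verify that precisely the $29$ listed values remain uncovered. The genuine obstacle is this final audit. One must establish, for each excluded order, that \emph{no} combination of the base constructions and the product/recursive constructions of Section~\ref{sc:spec} will produce a cyclic DCA$(4,2p+1;2p)$ with P1 and P2, while simultaneously showing that every other even order up to $356$ is reachable. This is tedious but algorithmic bookkeeping, and in the present paper it is simply quoted from van Rees's private communication \cite{priv-van-rees}; redoing it from scratch would be the most time-consuming (though routine) part of the argument.
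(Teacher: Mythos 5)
First, a structural point: the paper offers no proof of this lemma at all --- it is quoted verbatim from van Rees's private communication \cite{priv-van-rees}, so there is no internal argument to match. Your reconstruction has the right outer shape (invoke the Li--van Rees bound to dispose of all $2p\ge 358$, then do a finite audit on $[6,356]$), and your enumeration of the DDK congruence classes $14,22,38,46 \bmod 48$ is correct. But as a proof sketch it has a genuine gap: your three layers cannot cover all even orders outside the excluded list. The cyclic-DCA product machinery of Section~\ref{sc:spec} is structurally incapable of reaching orders such as $64, 68, 72, 76, 96, 108, 128, 144, \ldots$ (none of which appear in the excluded list, so the lemma asserts their existence). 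The reason is that a cyclic DM$(n,k;1)$ with $k\ge 3$ over ${\mathbb Z}_n$ forces a complete mapping of ${\mathbb Z}_n$, which does not exist for even $n$; hence you can never ``multiply'' two even factors cyclically, and holes of size $2$ are excluded by the bound $k\le p+1$ of Lemma~2.3 (no DCA$(4,3;2)$ with P1, P2 exists). The indispensable fourth layer is the pair of MOLS/GDD-based recursions of Li and van Rees (Theorems~\ref{vanrees1} and \ref{vanrees2} here), which act directly on Latin squares rather than on difference arrays; without them the audit fails on a substantial set of orders.

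A second, smaller error: since the lemma says ``exists except \emph{possibly} when\ldots'', the excluded list carries no claim, and you neither need to nor can show that ``no combination of the constructions'' reaches those orders. Indeed the whole point of the paper is that most of them (e.g.\ $26, 30, 34, 36, 50, 74, 92, 116, \ldots$) \emph{are} subsequently constructed. All that the audit requires is a one-directional check: every even $2p\in[6,356]$ not on the list is produced by some known construction. Framing the excluded orders as needing a non-reachability proof both inflates the work and contradicts the later sections.
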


In this section we will settle the existence question for all even orders except $2p=146$.  For completeness we list all values less than $2p=358$ and, given the connection with row complete Latin squares, where possible we will  use cyclic difference covering arrays to construct the Latin squares. Therefore we begin this section by reviewing relevant results from \cite{Ge}, \cite{Yin1} and  \cite{Yin2}, and adapting these to construct cyclic DCA$(4, 2m+1;2m)$ that satisfy  P1 and P2. Where appropriate we will indicate how these results can be used to  settle the spectrum question.

We begin with the following straight forward result that is analogous to \cite[Lem 2.3]{Ge}.

\begin{lemma}\label{l:insert} Suppose that there exists a HDM$(k,n;h)$ over the group $(G,+)$ with a hole over the subgroup $H$. Further suppose there exists a  DCA$(k,h+1;h)$ over $H$ satisfying  $P1$ and $P2$. Then there exists a DCA$(k,n+1;n)$ over $G$ satisfying  P1 and P2. Further suppose that the HDM$(k,n;h)$ and  DCA$(k,h+1;h)$ are cyclic. Then there exists a cyclic DCA$(k,n+1;n)$ satisfying  P1 and P2.
\end{lemma}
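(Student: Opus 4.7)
The plan is to build $Q$ by vertically concatenating the given HDM and DCA. Let $M_1$ be the $(n-h)\times k$ HDM over $G$, normalized (per Remark \ref{rem}) so that its last column is all zeros and every entry in its first $k-1$ columns lies in $G\setminus H$. Let $M_2$ be the $(h+1)\times k$ DCA over $H$, normalized so that its last row and last column are all zeros. Define
\[
Q \;=\; \begin{pmatrix} M_1 \\ M_2 \end{pmatrix},
\]
an $(n+1)\times k$ matrix over $G$ whose last row and last column are all zero.

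I would then verify the DCA covering property column-pair by column-pair. For $j,j' < k-1$, the rows of $M_1$ contribute to $\Delta_{j,j'}$ the multiset $G\setminus H$ (each element appearing exactly once, since counting forces $\lambda=1$ in the HDM), while the rows of $M_2$ contribute a multiset covering $H$ (by the inner DCA property). Their union covers $G$. For the pair $(j,k-1)$ with $j<k-1$, $\Delta_{j,k-1}$ reduces to the column-$j$ entries of $Q$; in $M_1$ these are each element of $G\setminus H$ exactly once (the last column of $M_1$ being zero) and in $M_2$ they cover $H$ (the last column of $M_2$ being zero). So $Q$ is a DCA$(k,n+1;n)$.

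Next I would check P1 and P2 for $Q$. For P1, the first $k-1$ columns of $M_1$ avoid $H$ and hence avoid $0$, so the zeros in those columns of $Q$ come entirely from $M_2$, where by the inner P1 at least two zeros appear per column; the last column of $Q$ is all zero. For P2, fix $j,j'<k-1$ and compute $\Delta_{j,j'}$ over rows $0$ through $n-1$ of $Q$: the $M_1$ rows contribute $G\setminus H$ with each element appearing once, while the top $h$ rows of $M_2$ contribute $H\setminus\{0\}$ with exactly one repeated element, by the inner P2. The union is $G\setminus\{0\}$ with a single repeated element lying in $H\setminus\{0\}$, hence nonzero — exactly what P2 for $Q$ requires. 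The cyclic case is immediate: if $G$ is cyclic with $H$ its cyclic subgroup, and both $M_1,M_2$ are realised over these groups, then $Q$ is a cyclic difference covering array by construction.

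The verification is mostly routine; the main point requiring care is the bookkeeping in P2, where one must invoke the inner DCA's P2 to ensure exactly one repeated difference arises and observe that this repeat, lying in $H\setminus\{0\}$, remains nonzero when viewed as an element of the larger group $G$.
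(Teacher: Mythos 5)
Your proof is correct and follows essentially the same route as the paper: both construct the new array by vertically concatenating the (normalized) HDM over $G$ with the DCA over the subgroup $H$, and verify the covering and P1/P2 properties pairwise on columns, with the repeated difference coming from inside $H$ and hence being nonzero. Your write-up is more detailed than the paper's (which states the concatenation and asserts the conclusion), but there is no substantive difference in approach.
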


\begin{proof}
In the cyclic case, let $A=[a(i,j)]$ ($0\leq i\leq n-1-h$, $0\leq j\leq k-1$) represent the cyclic HDM$(k,n;h)$ and $B=[b(i,j)]$ $(0\leq i\leq h$, $0\leq j\leq k-1$) represent the  cyclic DCA$(k,h+1;h)$. The definition of cyclic implies  that  $H=\{0,u, 2u, \dots(h-1)u\},$ where $n=uh$ and the proof of Lemma \ref{diff} implies that $h$ is even and the repeated difference in $\Delta_{j,j^\prime}$, $j\neq k-1\neq j^\prime$, of $B$ is $hu/2=n/2$.

Set $Q=[q(i,j)]$ ($0\leq i\leq n$, $0\leq j\leq k$) to be the concatenation of $A$ with $B$ and we obtain  a cyclic DCA$(k,n+1;n)$ that satisfies  P1 and P2.

The non-cyclic case follows similarly.

\end{proof}

Next we give a general product type construction taken from \cite{Ge} and adapt it to construct cyclic difference covering arrays that satisfy P1 and P2.

\begin{lemma}\cite[Lem 2.6]{Ge}\label{l:prod-ge} If both a cyclic HDM$(k,n;h)$ and a cyclic DM$(n^\prime,k;1)$ exist, then so does a cyclic HDM$(k,nn^\prime;hn^\prime)$. In particular, if there exists a cyclic DM$(n,k;1)$ and a cyclic DM$(n^\prime,k;1)$ then there exists cyclic DM$(nn^\prime,k;1)$.
\end{lemma}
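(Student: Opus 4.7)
The plan is to build a Kronecker-style product of the two input arrays via the mixed-base embedding $(a,b)\mapsto a+nb$ into $\mathbb{Z}_{nn'}$. Let $A=[a(i,j)]$, $0\le i\le n-h-1$, be the cyclic HDM$(k,n;h)$ over $\mathbb{Z}_n$ with hole subgroup $H$, and let $B=[b(i',j)]$, $0\le i'\le n'-1$, be the cyclic DM$(n',k;1)$ over $\mathbb{Z}_{n'}$. Choosing canonical integer representatives $a(i,j)\in\{0,\ldots,n-1\}$ and $b(i',j)\in\{0,\ldots,n'-1\}$, I would form the $(n-h)n'\times k$ array $C=[c((i,i'),j)]$ over $\mathbb{Z}_{nn'}$ defined by
$$
c((i,i'),j)=a(i,j)+n\,b(i',j)\pmod{nn'}.
$$

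The goal is then to verify that $C$ is a cyclic HDM$(k,nn';hn')$ whose hole is the unique cyclic subgroup $H'=(n/h)\mathbb{Z}_{nn'}$ of order $hn'$. The first step is to observe that $H'$ coincides with the preimage $\pi^{-1}(H)$ under the reduction map $\pi:\mathbb{Z}_{nn'}\to\mathbb{Z}_n$; in particular $\ker\pi=n\mathbb{Z}_{nn'}\subseteq H'$. For $j\neq j'$ the column-difference in row $(i,i')$ equals $\alpha_i+n\,d^{B}_{i'}$ in $\mathbb{Z}_{nn'}$, where $\alpha_i=a(i,j)-a(i,j')\pmod{nn'}$ and $d^{B}_{i'}=b(i',j)-b(i',j')\pmod{n'}$. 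Applying $\pi$ returns the HDM-difference $a(i,j)-a(i,j')\in\mathbb{Z}_n\setminus H$, so every difference of $C$ avoids $H'$. Uniqueness proceeds in two steps: if two rows $(i_1,i_1')$ and $(i_2,i_2')$ yield equal differences in $\mathbb{Z}_{nn'}$, reducing mod $n$ and invoking the HDM property of $A$ forces $i_1=i_2$; the residual equation $n\,d^{B}_{i_1'}\equiv n\,d^{B}_{i_2'}\pmod{nn'}$, i.e.\ $d^{B}_{i_1'}\equiv d^{B}_{i_2'}\pmod{n'}$, then forces $i_1'=i_2'$ by the DM property of $B$.

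A counting argument closes the proof: the $(n-h)n'$ row-differences so produced are pairwise distinct and all lie in $\mathbb{Z}_{nn'}\setminus H'$, a set of matching cardinality, so every non-hole element of $\mathbb{Z}_{nn'}$ is hit exactly once, and $\mathbb{Z}_{nn'}$ is cyclic. The ``in particular'' statement falls out as the trivial-hole case: running the identical construction with both inputs being full cyclic difference matrices yields a cyclic DM$(nn',k;1)$, since the same argument then shows that the differences sweep out all of $\mathbb{Z}_{nn'}$ exactly once. The main obstacle I anticipate is careful bookkeeping around the mixed-base embedding: the lift sending $a(i,j)\in\mathbb{Z}_n$ to its canonical representative in $\mathbb{Z}_{nn'}$ is only a set-theoretic section and not a group homomorphism, so confirming that the differences decompose cleanly as $\alpha_i+n\,d^{B}_{i'}$, and that $\pi^{-1}(H)$ really equals the cyclic subgroup $(n/h)\mathbb{Z}_{nn'}$ of the prescribed order, is the step where I would proceed most carefully.
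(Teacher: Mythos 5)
Your construction and verification are correct; the paper itself gives no proof of this lemma (it is quoted from Ge), but your base-$n$ Kronecker product $c((i,i'),j)=a(i,j)+n\,b(i',j)$ is exactly the construction the paper spells out for the analogous Lemma~\ref{l:prod}, and your two-step argument (reduce mod $n$ to pin down $i$ via the HDM property, then cancel to pin down $i'$ via the DM property, then count against $|\mathbb{Z}_{nn'}\setminus H'|=(n-h)n'$) is the standard and complete justification, including the identification $\pi^{-1}(H)=(n/h)\mathbb{Z}_{nn'}$ of the hole.
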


The first statement of Lemma \ref{l:prod-ge} coupled with Lemma \ref{l:insert} leads to the following straightforward result.

\begin{corollary}\label{c:prod}
Suppose that there exists a cyclic  HDM$(k,n;h)$, a cyclic DM$(n^\prime,$ $k;1)$ and a cyclic DCA$(k,hn^\prime+1;hn^\prime)$ that satisfies P1 and P2.  Then there exists a cyclic DCA$(k,nn^\prime+1;nn^\prime)$ that satisfies  P1 and P2.
\end{corollary}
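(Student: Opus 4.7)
The plan is a direct two-step application of the preceding results. First I would combine the hypothesized cyclic HDM$(k,n;h)$ with the cyclic DM$(n^\prime,k;1)$ using Lemma \ref{l:prod-ge} to produce a cyclic HDM$(k,nn^\prime;hn^\prime)$. This holey difference matrix has entries over a cyclic group of order $nn^\prime$ with a hole over a cyclic subgroup of order $hn^\prime$, so its hole has precisely the order required to match the given filler array.

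Next, I would invoke Lemma \ref{l:insert} with the HDM$(k,nn^\prime;hn^\prime)$ just constructed and the given cyclic DCA$(k,hn^\prime+1;hn^\prime)$ satisfying P1 and P2. Concatenating the rows of the HDM with the rows of the DCA yields an $(nn^\prime+1)\times k$ array over the cyclic group of order $nn^\prime$. For any pair of distinct columns $j,j^\prime$ with $j\neq k-1 \neq j^\prime$, the rows coming from the HDM contribute exactly the non-hole differences in $\mathbb Z_{nn^\prime}\setminus H$ (once each), while the rows coming from the DCA, after identifying $H$ with $\mathbb Z_{hn^\prime}$, contribute exactly $H\setminus\{0\}$ together with the one repeated non-zero difference (which by Lemma \ref{diff} equals $hn^\prime/2=nn^\prime/2$). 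Together these give every element of $\mathbb Z_{nn^\prime}\setminus\{0\}$ exactly once plus the repeat at $nn^\prime/2$, so P2 holds; and the two occurrences of $0$ in each column of the filler DCA survive, so P1 holds.

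Since both the HDM$(k,nn^\prime;hn^\prime)$ produced by Lemma \ref{l:prod-ge} and the filler DCA are cyclic, the resulting DCA$(k,nn^\prime+1;nn^\prime)$ is cyclic, completing the argument.

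There is really no main obstacle here: the corollary is essentially a bookkeeping consequence of chaining Lemmas \ref{l:insert} and \ref{l:prod-ge}. The only point that needs a moment of care is verifying that the repeated difference of the filler DCA, which lives in $H$, agrees with $nn^\prime/2$ inside the ambient group $\mathbb Z_{nn^\prime}$; this is immediate from the identification $H=\{0,n,2n,\dots,(hn^\prime-1)n\}$ used implicitly in Lemma \ref{l:insert} together with the formula $hu/2=nn^\prime/2$ when $u=n$ and $h$ is replaced by $hn^\prime$.
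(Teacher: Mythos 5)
Your proposal is correct and follows exactly the route the paper intends: the paper itself presents this corollary as an immediate consequence of combining the first statement of Lemma \ref{l:prod-ge} (to produce the cyclic HDM$(k,nn^\prime;hn^\prime)$) with Lemma \ref{l:insert} (to fill the hole with the given cyclic DCA$(k,hn^\prime+1;hn^\prime)$). Your additional remarks about the repeated difference landing at $nn^\prime/2$ are consistent with the argument already carried out in the proof of Lemma \ref{l:insert}.
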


The second statement of Lemma \ref{l:prod-ge} can also be adapted.

\begin{lemma}\label{l:prod} Suppose a cyclic DM$(n,k;1)$, a cyclic DM$(n^\prime,k;1)$  and a cyclic DCA$(k,n^\prime+1;n^\prime)$ satisfying P1 and P2  exist. Then there exists a cyclic DCA$(k,$ $nn^\prime+1;nn^\prime)$
that satisfies P1 and P2.
\end{lemma}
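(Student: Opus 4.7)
My plan is to derive Lemma \ref{l:prod} as the $h=1$ specialisation of Corollary \ref{c:prod}. The only ingredient that is not already packaged in the excerpt is the observation that a cyclic DM$(n,k;1)$ is essentially the same object as a cyclic HDM$(k,n;1)$ with trivial hole $H=\{0\}$.

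First, I would take the given cyclic DM$(n,k;1)$ and use the standard normalisation to arrange for its last row and last column to consist entirely of zeros. Since $\lambda=1$, the difference set $\Delta_{j,k-1}$ equals all of $\mathbb{Z}_n$ for each $j\in\{0,\dots,k-2\}$, so column $j$ is a permutation of $\mathbb{Z}_n$ whose unique $0$ lies in the (already zero) last row. Deleting this last row therefore produces an $(n-1)\times k$ array in which the last column is still all zero, no other column contains any zero, and in which each difference set $\Delta_{j,j'}$ equals $\mathbb{Z}_n\setminus\{0\}$ exactly once. This is by definition a cyclic HDM$(k,n;1)$ in the sense of Remark \ref{rem}.

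Next, I would feed this cyclic HDM$(k,n;1)$, together with the hypothesised cyclic DM$(n',k;1)$ and the hypothesised cyclic DCA$(k,n'+1;n')$ satisfying P1 and P2 (observing that with $h=1$ we have $hn'+1=n'+1$, so the sizes match), into Corollary \ref{c:prod}. Its conclusion is precisely a cyclic DCA$(k,nn'+1;nn')$ satisfying P1 and P2, which is what is required.

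The main obstacle, such as it is, is the bookkeeping in the first step: verifying that a normalised cyclic DM$(n,k;1)$ with its zero row removed genuinely satisfies the HDM axioms, including the implicit condition from Remark \ref{rem} that $0$ does not appear in the first $k-1$ columns. Once that reduction is in place, the $h=1$ instance of Corollary \ref{c:prod} absorbs the product step (Lemma \ref{l:prod-ge}) and the filling-in step (Lemma \ref{l:insert}) in one stroke, and properties P1 and P2 for the final array are inherited directly from those of the small DCA$(k,n'+1;n')$ via Lemma \ref{l:insert}.
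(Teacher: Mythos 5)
Your proposal is correct and matches the paper's own proof, which opens with exactly this observation: ``This result can be obtained by taking a hole of size 1 in Corollary \ref{c:prod} or as follows.'' The authors then elect to spell out an equivalent explicit interleaving construction instead, but your reduction of a normalised cyclic DM$(n,k;1)$ (delete the all-zero last row) to a cyclic HDM$(k,n;1)$ with trivial hole, followed by the $h=1$ instance of Corollary \ref{c:prod}, is precisely the first route the paper endorses.
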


\begin{proof} This result can be obtained by taking a hole of size 1 in Corollary \ref{c:prod} or as follows. Let $A=[a(i,j)]$ ($0\leq i\leq n-1,0\leq j\leq k-1$) represent the cyclic DM$(n,k;1)$,
$B=[b(i,j)]$ ($0\leq i\leq n^\prime-1,0\leq j\leq k-1$) represent the cyclic DM$(n^\prime,k;1)$ and $C=[c(i,j)]$ ($0\leq i\leq n^\prime,0\leq j\leq k-1$) represent the cyclic DCA$(k,n^\prime+1;n^\prime)$. Recall that $a(n-1,j)=b(n^\prime-1,j)=c(n^\prime,j)=0$ for all $0\leq j\leq k-1$ and $a(i,k-1)=b(i^\prime,k-1)=c(i^{\prime\prime},k-1)=0$ for all $0\leq i\leq n-1$, $0\leq i^\prime\leq n^\prime-1$, and $0\leq i^{\prime\prime}\leq n^\prime$.

Construct a matrix $Q=[q(i,j)]$ where, for $0\leq i\leq n-1$, $0\leq i^\prime\leq n^\prime-1$, $0\leq i^{\prime\prime}\leq n^\prime$ and $0\leq j\leq k-1$,
\begin{align*}
q(i+i^\prime n,j)&=a(i,j)+b(i^\prime,j)n, \mbox{ when }i\neq n-1\\
q(n-1+ i^{\prime\prime}n,j)&=c(i^{\prime\prime},j)n.
\end{align*}
Then
\begin{align*}
\Delta_{j,j^\prime}&=\{a(i,j)+b(i^\prime,j)n-a(i,j^\prime)-b(i^\prime,j^\prime)n\mid 0\leq i\leq n-2,0\leq i^\prime \leq n^\prime -1\}\\
&\quad \cup  \{c(i^{\prime\prime},j)n-c(i^{\prime\prime},j^\prime)n\mid 0\leq i^{\prime\prime}\leq n^\prime\},\\
&=
{\mathbb Z}_n\setminus\{0,n, 2n,\dots, n(n^\prime-1)\}\cup
\{0,n, 2n,\dots, nn^\prime/2,nn^\prime/2, \dots,\\
& \quad n(n^\prime-1)\}.
\end{align*}
Properties P1 and P2 follow as in the proof of Lemma \ref{l:insert}.

\end{proof}

This result can be generalised to construct non-cyclic difference covering arrays as in \cite{Yin1}.

We now combine these results with various results of \cite{colbourn}, \cite{Ge} and \cite{Yin2} to obtain results for cyclic DCA$(4, 2p+1;2p)$, satisfying  P1 and P2, implying new existence results for row complete $3$-MNOLS$(2p)$, where $2p<358$. In the lists below $*$ indicates that the existence of $3$-MNOLS$(2p)$ was previously unknown.

In doing this we will settle the remaining cases in Lemma \ref{rem-spec}, with the exception of $2p=146$. Here we believe that there exists a DCA$(4, 147;146)$ satisfying P1 and P2 but have been unable to verify it.

\begin{theorem}\cite[Thm 17.6, p 411]{colbourn}\label{prime-cdm} If $n$ is a prime greater than or equal to $k$, then there exists a cyclic DM$(n,k;1)$.
\end{theorem}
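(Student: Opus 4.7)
The plan is to give the classical multiplication-table construction over $\mathbb{Z}_n$. Let $n=p$ be prime with $p\geq k$, and define the $p\times k$ matrix $Q=[q(i,j)]$ by
\[
q(i,j)=ij \pmod{p},\qquad 0\leq i\leq p-1,\ 0\leq j\leq k-1.
\]
I would then directly verify the difference condition: for any two distinct columns $j,j'\in\{0,1,\dots,k-1\}$,
\[
\Delta_{j,j'}=\{\,q(i,j)-q(i,j')\mid 0\leq i\leq p-1\,\}=\{\,i(j-j')\bmod p\mid 0\leq i\leq p-1\,\}.
\]

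The heart of the argument is that $j-j'$ is a unit in $\mathbb{Z}_p$. Indeed, since $0\leq j,j'\leq k-1\leq p-1$ and $j\neq j'$, we have $|j-j'|\in\{1,2,\dots,k-1\}$, and because $k\leq p$ this forces $j-j'\not\equiv 0\pmod p$. As $p$ is prime, $j-j'$ is invertible in $\mathbb{Z}_p$, so multiplication by $j-j'$ is a bijection of $\mathbb{Z}_p$. Therefore $\{i(j-j')\bmod p:0\leq i\leq p-1\}=\mathbb{Z}_p$, each element appearing exactly once, which is the required condition with $\lambda=1$. Cyclicity is automatic since the entries live in the cyclic group $\mathbb{Z}_p$.

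Finally, to match the normalization adopted in the paper, where the last row and the last column consist of zeros, I would note that row~$0$ and column~$0$ of $Q$ are already all zero, and permuting rows (respectively columns) of a difference matrix preserves the difference sets $\Delta_{j,j'}$. Swapping row $0$ with row $p-1$, and column $0$ with column $k-1$, yields a normalized cyclic DM$(p,k;1)$.

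There is essentially no real obstacle in this proof — the only point requiring any care is observing that the hypothesis $k\leq p$ combined with primality of $p$ is exactly what is needed for $j-j'$ to be invertible in $\mathbb{Z}_p$; without the bound $k\leq n$ one could have $j-j'\equiv 0\pmod n$, and without primality one could have a nonzero non-unit.
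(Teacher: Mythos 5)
Your proof is correct: the multiplication-table construction $q(i,j)=ij \bmod p$ together with the observation that $j-j'$ is a unit in $\mathbb{Z}_p$ (since $0<|j-j'|\le k-1\le p-1$ and $p$ is prime) is exactly the classical argument. The paper gives no proof of its own here --- it simply cites the result from the Handbook of Combinatorial Designs --- and your argument, including the normalization by row/column swaps, is the standard one underlying that citation.
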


\begin{lemma}\cite[Lem 2.5]{Yin2}\label{hdm-2prime}  Let $n\geq 5$ be prime.  Then there exists a cyclic HDM$(4, 2n;2)$.
\end{lemma}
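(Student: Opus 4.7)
The plan is to construct the HDM explicitly, following \cite[Lemma 2.5]{Yin2}. The key observation is that since $n$ is an odd prime, the Chinese remainder theorem gives $\mathbb{Z}_{2n}\cong\mathbb{Z}_n\times\mathbb{Z}_2$, and under this identification the required hole $\{0,n\}$ corresponds to the subgroup $\{0\}\times\mathbb{Z}_2$. The non-hole set is then $\mathbb{Z}_n^*\times\mathbb{Z}_2$, which has exactly $2(n-1)$ elements, matching the number of rows of the target HDM.

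The first coordinate of the construction is inherited from a cyclic DM$(n,4;1)$ over $\mathbb{Z}_n$, whose existence for $n\geq 5$ prime is given by Theorem~\ref{prime-cdm}. Concretely, one can take $m(i,j)=ij\bmod n$ for $i\in\mathbb{Z}_n$ and $j\in\{0,1,2,3\}$; after deleting the all-zero row, the remaining $n-1$ rows are indexed by $\mathbb{Z}_n^*$. I would then index the $2(n-1)$ rows of the HDM by pairs $(a,\epsilon)\in\mathbb{Z}_n^*\times\mathbb{Z}_2$, declare the last column identically zero, and use the underlying DM to populate the first ($\mathbb{Z}_n$) coordinate of the entries of the remaining three columns, taking the first coordinate of row $(a,\epsilon)$ in column $j<3$ to be $a s_j\bmod n$ for three distinct nonzero scalars $s_0,s_1,s_2\in\mathbb{Z}_n$.

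The main difficulty, and what distinguishes the construction from a trivial doubling of the DM, is the assignment of the $\mathbb{Z}_2$-component (second coordinate) of each entry. The obstacle is that any naive choice, in which the second coordinate of $q((a,\epsilon),j)$ is a function of $(a,j)$ plus a $\epsilon$-linear term whose slope does not vary with $a$, makes the second coordinate of every pairwise between-column difference independent of $\epsilon$; the two rows $(a,0)$ and $(a,1)$ then contribute identical differences and the difference multiset collapses onto $\mathbb{Z}_n^*\times\{0\}$ counted twice rather than $\mathbb{Z}_n^*\times\mathbb{Z}_2$ once. Following Yin, this obstruction is resolved by letting the second coordinate depend on both $a$ and $\epsilon$ in a coupled (nonseparable) fashion, for example by attaching to each $a$ a sign $\eta(a)\in\mathbb{Z}_2$ driven by the quadratic character modulo $n$, so that the $\epsilon=0$ and $\epsilon=1$ halves contribute different second coordinates in a controlled way. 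Once a suitable assignment is in place, verification of the HDM property is a finite check: for each of the six pairs of columns one confirms that the $2(n-1)$ rows split evenly into those giving $\mathbb{Z}_2$-difference $0$ and those giving $\mathbb{Z}_2$-difference $1$, and that within each half the first-coordinate differences cover $\mathbb{Z}_n^*$ exactly once. These conditions reduce to a small, finite set of congruences in $\mathbb{Z}_n$ that hold for every odd prime $n\geq 5$.
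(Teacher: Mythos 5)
This lemma is quoted verbatim from Yin \cite[Lem 2.5]{Yin2}; the paper offers no proof of its own, so there is nothing internal to compare against. Judged on its own merits, your proposal has a structural flaw that no choice of the ``coupled'' second coordinate can repair. You fix the $\mathbb{Z}_n$-component of the entry in row $(a,\epsilon)$ and column $j$ to be $as_j$, independent of $\epsilon$. Then for any pair of columns $j\neq j'$ the $\mathbb{Z}_n$-part of the difference is $a(s_j-s_{j'})$, and since $a\mapsto a(s_j-s_{j'})$ is a bijection of $\mathbb{Z}_n^*$, the only two rows realizing a given value $d$ are $(a,0)$ and $(a,1)$ for the \emph{same} $a$. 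For the difference multiset to equal $\mathbb{Z}_n^*\times\mathbb{Z}_2$ with $\lambda=1$, these two rows must therefore produce distinct $\mathbb{Z}_2$-parts, i.e.\ writing $\delta_j(a)$ for the change in the $\mathbb{Z}_2$-part of column $j$'s entry when $\epsilon$ flips from $0$ to $1$, you need $\delta_j(a)\neq\delta_{j'}(a)$ for every $a$ and \emph{every} pair of the four columns. Four elements of $\mathbb{Z}_2$ cannot be pairwise distinct (indeed, the all-zero last column forces $\delta_3(a)=0$, whence $\delta_0(a)=\delta_1(a)=\delta_2(a)=1$, a contradiction already among the first three columns). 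So the obstruction you correctly diagnose for the ``separable'' choice is not removed by letting the $\epsilon$-slope depend on $a$, nor by a quadratic-character twist; it is forced by your decision to make the first coordinate $\epsilon$-independent. Any working construction must let the $\mathbb{Z}_n$-component itself vary with $\epsilon$ (e.g.\ use different scalar triples on the two halves of the rows), so that a given $\mathbb{Z}_n$-difference is realized by rows with different values of $a$.

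Beyond this, the argument is not actually a proof even of what it sets out to do: the second-coordinate assignment is introduced only as ``for example \dots\ driven by the quadratic character'' without being defined, and the verification is compressed to the assertion that ``a small, finite set of congruences \dots\ hold for every odd prime $n\geq5$'' without stating which congruences or why they hold. Since the entire content of the lemma is the existence of an explicit array satisfying six simultaneous difference conditions, leaving both the array and the check unspecified leaves the lemma unproved. The remedy is either to cite Yin's Lemma 2.5 as the paper does, or to write down Yin's actual rows over $\mathbb{Z}_{2n}$ and verify the six difference sets directly.
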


\begin{lemma}
There exists cyclic DCA$(4, 2p+1;2p)$ for $2p=50^*$, {\rm 98}, $170^*$, {\rm 242, 290, 338}.
\end{lemma}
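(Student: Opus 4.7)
My plan is to handle all six orders in a uniform way via Corollary \ref{c:prod}, writing $2p=n\cdot n'$ so that the three hypotheses of that Corollary line up with results already in hand.  The HDM ingredient is taken to be a cyclic HDM$(4,n;2)$ with $n=2q$ for some prime $q\ge 5$, supplied by Lemma \ref{hdm-2prime}; the DM ingredient is a cyclic DM$(n',4;1)$ with $n'$ a prime $\ge 5$, supplied by Theorem \ref{prime-cdm}; and the hole-filling ingredient is a cyclic DCA$(4,2n'+1;2n')$ with P1 and P2, which I hope to inherit from the list in the preceding lemma.

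Concretely, I would use the factorisations $50=10\cdot 5$, $98=14\cdot 7$, $170=34\cdot 5$, $242=22\cdot 11$, and $290=58\cdot 5$.  The corresponding primes $q\in\{5,7,17,11,29\}$ and $n'\in\{5,7,11\}$ all satisfy the hypotheses of Lemma \ref{hdm-2prime} and Theorem \ref{prime-cdm}, and the companion orders $2n'\in\{10,14,22\}$ are all present in the list established just before, so Corollary \ref{c:prod} with $(n,h,n')=(2q,2,n')$ immediately produces the desired cyclic DCA$(4,2p+1;2p)$ satisfying P1 and P2 in each of these five cases.

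The remaining case $2p=338=26\cdot 13$ is the main obstacle.  The natural choice $n=26$, $n'=13$ requires a cyclic DCA$(4,27;26)$ satisfying P1 and P2, and $2p=26$ does not appear on the inherited list; indeed $26$ is still in the van Rees open list of Lemma \ref{rem-spec}.  No other factorisation of $338$ rescues the situation within the toolkit of Section \ref{sc:spec}: the only nontrivial divisors of $338$ are $2$, $13$, $26$, and $169$, so the only alternative hole sizes are either odd (and hence excluded by Lemma \ref{diff}) or equal to $2$, in which case one would need a cyclic DCA$(4,3;2)$ satisfying P1 and P2, and a short case check over $\mathbb{Z}_2$ (three rows each containing $0$ at least twice, with row $2$ the zero row) rules this out.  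My plan for $2p=338$ is therefore to construct a cyclic DCA$(4,27;26)$ with P1 and P2 by an independent argument---either from one of the new constructions of Section \ref{constructions} or by a direct construction on $\mathbb{Z}_{26}$---after which Corollary \ref{c:prod} with $(n,h,n')=(26,2,13)$ closes the case.  Supplying this missing ingredient for $n=26$ is where I expect the real work of the proof to lie.
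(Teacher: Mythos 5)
Your proposal is correct and matches the paper's proof: the authors use exactly these factorisations, building a cyclic HDM$(4,2p;h)$ with $2p(h)=50(10)$, $98(14)$, $170(10)$, $242(22)$, $290(10)$, $338(26)$ from Lemma \ref{hdm-2prime} and Theorem \ref{prime-cdm}, and then filling the hole via Corollary \ref{c:prod} using the small DCAs of orders $10$, $14$, $22$, $26$. The one ingredient you flag as ``the real work,'' a cyclic DCA$(4,27;26)$ satisfying P1 and P2, is exactly what the paper supplies in Section \ref{constructions} (Corollary \ref{cor-2m} with $i=0$, made explicit in the example with $m=13$, $f=16$), so the $2p=338$ case closes precisely as you planned.
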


\begin{proof}
Corollary \ref{c:prod} together with Theorem  \ref{prime-cdm} and Lemma \ref{hdm-2prime}  can be used first to construct a cyclic HDM$(4, 2p;h)$ with
$2p(h)=$ {\rm 50(10), 98(14), 170(10), 242(22), 290(10), 338(26)} and then the
required DCA$(4, 2p+1;2p)$.

\end{proof}

Lemmas \ref{spec:90}, \ref{spec:60}, \ref{spec:140} do not document any new existence results, however they do verify that for the given orders
cyclic DCA$(4, 2p+1;2p)$ satisfying P1 and P2 exist.

\begin{lemma}\cite[Thm 2.1]{Yin2}\label{hdm-23}  Let $n$ be an odd positive integer satisfying gcd$(n, 9)\neq 3$.  Then there exists a cyclic HDM$(4, 2n;2)$.
\end{lemma}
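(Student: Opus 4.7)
The goal is to construct a cyclic HDM$(4,2n;2)$ over $\mathbb{Z}_{2n}$ with hole $H=\{0,n\}$; that is, a $(2n-2)\times 4$ matrix whose column-difference multi-sets each realise $\mathbb{Z}_{2n}\setminus\{0,n\}$ exactly once. Since $n$ is odd, the Chinese Remainder Theorem yields $\mathbb{Z}_{2n}\cong \mathbb{Z}_2\oplus \mathbb{Z}_n$, with the hole corresponding to $\mathbb{Z}_2\oplus\{0\}$, so the first coordinate lives entirely in $\mathbb{Z}_2$ and only the second coordinate interacts non-trivially with the hole. The plan is to assemble the matrix from base HDMs using the product construction of Lemma \ref{l:prod-ge}, case-splitting on the prime factorisation of $n$ as dictated by the hypothesis.

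The condition $\gcd(n,9)\ne 3$ is precisely the disjunction $\gcd(n,3)=1$ or $9\mid n$. In the first case, when every prime factor of $n$ is at least $5$, I would start from Lemma \ref{hdm-2prime}, which supplies a cyclic HDM$(4,2p;2)$ for each prime $p\ge 5$, and compose it with a cyclic DM$(n/p,4;1)$, built up from the remaining prime factors of $n$ by iterating Theorem \ref{prime-cdm} together with the product half of Lemma \ref{l:prod-ge}. In the second case, $9\mid n$, I would take a base cyclic HDM$(4,18;2)$ over $\mathbb{Z}_{18}$---which one writes down explicitly and checks directly, since only $16$ differences per column pair must be verified---and compose it via Lemma \ref{l:prod-ge} with a cyclic DM on the cofactor $n/9$. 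If $n/9$ still contains a factor of $3$ (as when $n=27,81,\dots$), the composition is iterated with a higher-power base HDM$(4,2\cdot 3^k;2)$ taking the rôle of the $18$.

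The main obstacle is securing the base HDMs: the existence of HDM$(4,18;2)$, and more generally of HDM$(4,2\cdot 3^k;2)$ for all $k\ge 2$, must be established by a direct algebraic construction, because straightforward product arguments fail here. This failure also explains the hypothesis: whenever $3\mid n$ but $9\nmid n$, any factorisation $n=n_0 m$ intended for Lemma \ref{l:prod-ge} forces a $3$ into either the HDM side (where no HDM$(4,6;2)$ of the required form is available) or the DM side, and Theorem 1.1 forbids any cyclic DM$(3,4;1)$ since $4>3$. Once the base HDMs are in hand and all prime factors of $n$ other than $3$ are absorbed on the DM side of the product, the standard verification that the resulting column-difference multi-sets cover $\mathbb{Z}_{2n}\setminus\{0,n\}$ exactly once completes the proof.
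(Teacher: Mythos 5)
The paper gives no proof of this lemma; it is quoted directly from Yin, so the only question is whether your reconstruction stands on its own. It does not, because of a concrete misuse of the product construction. Lemma \ref{l:prod-ge} turns a cyclic HDM$(k,n;h)$ and a cyclic DM$(n',k;1)$ into a cyclic HDM$(k,nn';hn')$: the hole is \emph{multiplied} by $n'$. Hence composing the HDM$(4,2p;2)$ of Lemma \ref{hdm-2prime} with a DM$(n/p,4;1)$ yields an HDM$(4,2n;2n/p)$, not the HDM$(4,2n;2)$ you want, and the same objection applies to your $9\mid n$ case, where the product of an HDM$(4,18;2)$ with a DM$(n/9,4;1)$ has a hole of size $2n/9$. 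To repair this you need the standard ``fill the hole'' step for holey difference matrices --- concatenate onto the HDM$(4,2n;2n/p)$ an HDM$(4,2n/p;2)$ written over the subgroup of order $2n/p$ --- and then organise the whole argument as an induction on the number of prime factors of $n$. The paper contains no such lemma (Lemma \ref{l:insert} fills a hole with a DCA, not with another HDM), so you would have to state and prove it; it is true and routine, but it is the load-bearing step and it is missing.

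Even with that repaired, the base cases are not secured. Lemma \ref{hdm-2prime} covers primes $p\geq 5$, but when $9\mid n$ you need a cyclic HDM$(4,2\cdot 3^k;2)$ for the full $3$-part $3^k$ of $n$ (or at least HDM$(4,18;2)$ together with cyclic DM$(3^j,4;1)$ for the remaining powers of $3$, whose existence is itself delicate: Theorem \ref{prime-cdm} gives nothing for the prime $3$, and Theorem \ref{3-cdm} only starts at $3^3$ and with $k=5$ columns). You acknowledge that these objects ``must be established by a direct algebraic construction'' but do not supply one, so the hardest part of the statement --- precisely the part responsible for the hypothesis $\gcd(n,9)\neq 3$ --- is asserted rather than proved. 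Your identification of the target object, the case split on $\gcd(n,9)$, and the observation that no cyclic DM$(3,4;1)$ exists are all correct, but as written the argument establishes the lemma only when $n$ is itself a prime at least $5$.
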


\begin{lemma}\label{spec:90}
There exists cyclic DCA$(4, 2p+1;2p)$ for $2p=$ {\rm 90, 126, 198, 234, 306, 342}.
\end{lemma}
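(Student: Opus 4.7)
The plan is to dispose of all six cases uniformly via Corollary~\ref{c:prod}, paralleling the structure of the preceding lemma's proof. First I would observe that each of the six listed orders factors as $2p = 18m$ for $m \in \{5, 7, 11, 13, 17, 19\}$, every one of which is a prime at least $5$. Taking $k = 4$, $n = 18$, $h = 2$, and $n' = m$ in the hypotheses of Corollary~\ref{c:prod} gives $nn' = 2p$ and $hn' = 2m$, so the conclusion of that corollary is exactly a cyclic DCA$(4, 2p + 1; 2p)$ satisfying P1 and P2, provided the three required ingredients are in hand.

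Second, I would supply those ingredients. A cyclic HDM$(4, 18; 2)$ comes from Lemma~\ref{hdm-23} applied with $n = 9$, since $9$ is odd and $\gcd(9, 9) = 9 \ne 3$. A cyclic DM$(m, 4; 1)$ is furnished by Theorem~\ref{prime-cdm} for each $m \in \{5, 7, 11, 13, 17, 19\}$, each being a prime with $m \ge k = 4$. A cyclic DCA$(4, 2m + 1; 2m)$ satisfying P1 and P2 is available for $2m \in \{10, 14, 22, 38\}$ from the initial list of small cyclic DCAs stated earlier, and for $2m = 26$ and $2m = 34$ from the new constructions of Section~\ref{constructions} (the $2m = 26$ DCA is likewise invoked implicitly in the preceding lemma when $2p = 338$ and $h = 26$).

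Assembling these three ingredients via Corollary~\ref{c:prod} then produces the cyclic DCA$(4, 18m + 1; 18m)$ satisfying P1 and P2 for each $m$, which is precisely the required DCA$(4, 2p + 1; 2p)$. The main obstacle is the availability of the filler DCAs of orders $26$ and $34$, which are not present in the initial listing and must be borrowed from Section~\ref{constructions}; apart from this, the proof is a uniform book-keeping application of the three cited results with a single choice of parameters.
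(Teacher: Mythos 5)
Your proposal is correct and follows exactly the paper's route: the paper likewise writes each order as $2p=18m$ with hole $h=2m$, building the cyclic HDM$(4,2p;2m)$ from the HDM$(4,18;2)$ of Lemma~\ref{hdm-23} and the DM$(m,4;1)$ of Theorem~\ref{prime-cdm}, then filling the hole via Corollary~\ref{c:prod} with the small DCAs of orders $10,14,22,26,34,38$. Your explicit sourcing of the order-$26$ and order-$34$ fillers from Section~\ref{constructions} is a detail the paper leaves implicit, but it is the same argument.
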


\begin{proof}
Corollary \ref{c:prod} together with Theorem  \ref{prime-cdm}  and Lemma \ref{hdm-23}  can be used first  to construct a cyclic HDM$(4, 2p;h)$ with
$2p(h)=$ {\rm 90(10), 126(14), 198(22), 234(26), 306(34), 342(38)} and then the
required DCA$(4, 2p+1;2p)$.
	
\end{proof}

\begin{theorem}\cite[Thm 2.3]{Yin2}\label{hdm-4prime}  Let $n\geq 4$ and $n=2^\alpha 3^\beta p_1^{\alpha_1}\dots p_t^{\alpha_t},$ where $(\alpha,\beta)\neq (1,0)$, $\alpha_i \geq 0$ and the prime factors $p_i\geq 5$ for $1 \leq i \leq t$.  Then there exists a cyclic HDM$(4, 2n;2)$.
\end{theorem}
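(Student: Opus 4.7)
The plan is to prove the theorem by strong induction on $n$, combining a small set of base cases with the product construction of Lemma \ref{l:prod-ge} and a standard filling-in-holes step. The base cases are supplied directly: for $n$ a prime at least $5$, Lemma \ref{hdm-2prime} already yields a cyclic HDM$(4, 2n; 2)$; for odd $n$ with $\gcd(n, 9) \neq 3$, Lemma \ref{hdm-23} does the same; and for a short list of small composites consistent with the hypothesis (for example $n = 4, 6, 8, 9, 12, 16, 18, 24, 27$) one exhibits explicit cyclic HDM$(4, 2n; 2)$ over $\mathbb{Z}_{2n}$, typically by a computer-assisted search over starter blocks. Theorem \ref{prime-cdm} additionally guarantees a cyclic DM$(n', 4; 1)$ whenever $n'$ is a prime at least $5$, and these provide the multipliers used in the inductive step.

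For the inductive step I would factor $n = n_1 n_2$ so that $n_1 \geq 4$ already satisfies the hypothesis (making a cyclic HDM$(4, 2 n_1; 2)$ available by the inductive hypothesis) and so that a cyclic DM$(n_2, 4; 1)$ exists (taking $n_2$ prime and at least $5$, or a product of such primes via the second half of Lemma \ref{l:prod-ge}). The first half of Lemma \ref{l:prod-ge} applied to these two ingredients produces a cyclic HDM$(4, 2 n_1 n_2; 2 n_2)$ on $\mathbb{Z}_{2n}$ whose hole sits over the subgroup of order $2 n_2$. One then fills this hole with a cyclic HDM$(4, 2 n_2; 2)$ (again by induction or a base case) in the usual way, overlaying its rows inside the hole after translating by coset representatives; the concatenation of rows delivers the desired cyclic HDM$(4, 2n; 2)$.

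The main obstacle is the bookkeeping in the case analysis: one must verify that a suitable factorization is always available for every $(\alpha, \beta) \neq (1, 0)$. When $\alpha \geq 2$, peel off a factor of $4$ (seeded by an HDM$(4, 8; 2)$); when $\alpha = 1$ and $\beta \geq 1$, peel off a factor of $6$ or $12$; when $\alpha = 0$ and $\beta \geq 1$, peel off a factor of $3$ against some $p_i \geq 5$ (or invoke Lemma \ref{hdm-23} and the small odd base cases directly); when $\alpha = \beta = 0$, Lemma \ref{hdm-23} already covers the remaining odd $n$. The excluded pattern $(\alpha, \beta) = (1, 0)$, namely $n = 2 p_1^{\alpha_1} \cdots p_t^{\alpha_t}$ with each $p_i \geq 5$, resists this approach precisely because no cyclic DM$(2, 4; 1)$ exists (the necessary bound $k \leq \lambda n$ is violated), so the lone factor of $2$ cannot serve as a multiplier, and no small cyclic HDM$(4, 4; 2)$ is available to seed the induction.
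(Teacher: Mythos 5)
The paper does not prove this statement; it is quoted verbatim from Yin \cite[Thm 2.3]{Yin2}, so there is no in-paper proof to compare against and your argument has to stand on its own.

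It does not: the inductive step has a genuine gap. Your only engine is Lemma \ref{l:prod-ge}, whose multiplier must be a cyclic DM$(n_2,4;1)$, and no cyclic DM$(n_2,k;1)$ with $k\geq 3$ exists for \emph{any} even $n_2$. (Sum the differences of two columns: each of the three pairwise difference multisets sums to $n_2(n_2-1)/2\equiv n_2/2 \pmod{n_2}$, yet additivity over three columns forces $n_2/2+n_2/2\equiv n_2/2$, i.e.\ $n_2/2\equiv 0$ --- the same parity computation as in the proof of Lemma \ref{diff}.) Hence every multiplier in your recursion must be an \emph{odd} divisor of $n$. For $n=2^\alpha$ with $\alpha\geq 2$ the only odd divisor is $1$, so the recursion never fires and every such $n$ would have to be a base case; your finite list (stopping at $27$) leaves $n=32,64,128,\dots$ unreached. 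The same obstruction afflicts the whole infinite family $n=2^\alpha 3^\beta$: the only candidate odd multipliers are powers of $3$, cyclic DM$(3,4;1)$ violates $k\leq\lambda n$, and cyclic DM$(3^i,4;1)$ is only available for $i\geq 3$ (Theorem \ref{3-cdm}), which still strands, e.g., all $2^\alpha$, $2^\alpha\cdot 3$ and $2^\alpha\cdot 9$. You correctly flag this even-multiplier non-existence as the reason $(\alpha,\beta)=(1,0)$ is excluded, but you do not notice that it equally blocks your own induction on the $2^\alpha 3^\beta$ seeds; Yin's actual proof necessarily supplies these seeds by separate direct constructions rather than by an odd-multiplier product. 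The rest of your outline --- multiply a seed HDM$(4,2\cdot 2^\alpha 3^\beta;2)$ by a cyclic DM over the part of $n$ coprime to $6$ (Theorem \ref{prime-cdm} plus the second half of Lemma \ref{l:prod-ge}), then fill the resulting hole of order $2n_2$ with an HDM$(4,2n_2;2)$ from Lemma \ref{hdm-23} --- is sound, and the hole-filling step is the correct HDM analogue of Lemma \ref{l:insert}; but without the infinite family of seeds the proof is incomplete.
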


\begin{lemma}\label{spec:60}
There exists cyclic DCA$(4, 2p+1;2p)$ for $2p=$ {\rm 60, 80, 84, 100, 112, 120, 132, 156, 160, 168,
176, 180, 204, 208, 224, 228, 240, 252, 264, 272, 276, 300, 304,
312, 320, 336, 352}.
\end{lemma}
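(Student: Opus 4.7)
The plan is to apply Corollary~\ref{c:prod} once to each of the $27$ listed values. For each target $2p$ I seek a factorisation $2p = 2m \cdot n'$ in which (i) $m$ satisfies the prime-factorisation hypothesis of Theorem~\ref{hdm-4prime}, namely $m = 2^\alpha 3^\beta p_1^{\alpha_1}\cdots p_t^{\alpha_t}$ with $(\alpha,\beta)\neq(1,0)$ and $p_i\geq 5$, so that a cyclic HDM$(4,2m;2)$ exists; (ii) $n'$ is a prime $\geq 5$, so that a cyclic DM$(n',4;1)$ is supplied by Theorem~\ref{prime-cdm}; and (iii) a cyclic DCA$(4,2n'+1;2n')$ satisfying P1 and P2 is already in hand. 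Corollary~\ref{c:prod}, invoked with hole $h=2$ inside the HDM and base DCA on $hn' = 2n'$ points, then produces the required cyclic DCA$(4,2p+1;2p)$.

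For most listed values the factorisation is immediate. Taking $n'=5$ handles $2p\in\{60,80,120,160,180,240,300,320\}$ with $m\in\{6,8,12,16,18,24,30,32\}$; $n'=7$ handles $\{84,112,168,224,252,336\}$ with $m\in\{6,8,12,16,18,24\}$; $n'=11$ handles $\{132,176,264,352\}$; $n'=19$ handles $\{228,304\}$; and $n'=23$ handles $\{276\}$. In each case a one-line check (for instance $6=2\cdot 3$, $8=2^3$, $18=2\cdot 3^2$, $30=2\cdot 3\cdot 5$, $32=2^5$, each avoiding $(\alpha,\beta)=(1,0)$) confirms the hypothesis of Theorem~\ref{hdm-4prime}, and the base DCAs on $2n'\in\{10,14,22,38,46\}$ points come from Lemma~3.3.

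For $n'=13$ and $n'=17$, which are forced by $\{156,208,312\}$ and $\{204,272\}$ respectively (these values have $13$ or $17$ as their only odd prime factor that is at least $5$, so no other choice of $n'$ divides $2p$), the required bases are DCA$(4,27;26)$ and DCA$(4,35;34)$; neither appears in Lemma~3.3. The argument therefore relies on these two base orders being supplied by a construction given elsewhere in the paper, most naturally one of the direct constructions developed in Section~\ref{constructions}, which would need to be cited at this point.

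The genuine obstacle is $2p=100$. The only divisors of $100$ that are products of primes $\geq 5$ are $5$ and $25$, and they force $m=10=2\cdot 5$ and $m=2$ respectively; both fall into the single excluded case $(\alpha,\beta)=(1,0)$ of Theorem~\ref{hdm-4prime}. Alternative routes via Lemma~\ref{l:prod-ge} or Lemma~\ref{l:prod} demand a cyclic DM$(n,4;1)$ for some $n\in\{2,4,10\}$, none of which exists, while Lemmas~\ref{hdm-23} and \ref{hdm-2prime} fail for the remaining candidates ($10$ and $50$ are neither odd primes nor hit by those lemmas' hypotheses). Closing $2p=100$ therefore requires a bespoke construction, either an HDM$(4,100;h)$ for a larger hole $h\in\{4,20\}$ fed into Lemma~\ref{l:insert}, or a direct production of DCA$(4,101;100)$ of the kind obtained in Section~\ref{constructions}; this is the step I expect to absorb the real work.
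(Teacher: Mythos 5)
For $26$ of the $27$ orders your argument is exactly the paper's: it writes each $2p$ as $2m\cdot n'$ with $n'\in\{5,7,11,13,17,19,23\}$, takes the cyclic HDM$(4,2m;2)$ from Theorem~\ref{hdm-4prime}, the cyclic DM$(n',4;1)$ from Theorem~\ref{prime-cdm}, and feeds a base DCA on $2n'$ points into Corollary~\ref{c:prod}; the hole sizes $10,14,22,26,34,38,46$ you obtain are precisely the values $h$ recorded in the paper's list $2p(h)$. Your observation that the base orders $26$ and $34$ do not appear in the earlier lemma of known small orders and must be imported from Section~\ref{constructions} (Corollary~\ref{cor-2m} with $i=0$ gives $m=13$, and Theorem~\ref{thm-6mu+4} with $\mu=5$ gives $6\mu+4=34$) is correct, and is exactly the implicit forward reference the paper is relying on.

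The one genuine gap is $2p=100$, which you leave open. You are right to be suspicious of this case: the paper's own entry ``$100(10)$'' forces a cyclic HDM$(4,20;2)$, and $10=2\cdot 5$ falls into the excluded case $(\alpha,\beta)=(1,0)$ of Theorem~\ref{hdm-4prime} as quoted, so the cited theorem does not deliver it. But your conclusion that a bespoke construction is needed is wrong. Theorem~\ref{hdm-4hole}, stated in the paper a few lines further on and used for Lemma~\ref{spec:140}, gives a cyclic HDM$(4,20;4)$ (take $n=5$); Lemma~\ref{l:prod-ge} with a cyclic DM$(5,4;1)$ turns this into a cyclic HDM$(4,100;20)$, and Corollary~\ref{c:prod} together with the known cyclic DCA$(4,21;20)$ satisfying P1 and P2 (order $20$ is in the list from \cite{LvR}) then yields the required cyclic DCA$(4,101;100)$. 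You even name the hole size $20$ as a candidate but do not notice that the paper already supplies the tool. Adding this one step completes your proof; as written, the case $2p=100$ of the statement is not established.
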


\begin{proof}
Corollary \ref{c:prod} together with Theorem  \ref{prime-cdm}  and Theorem \ref{hdm-4prime}  can be used first to construct a cyclic HDM$(4, 2p;h)$ with
$2p(h)=$ 60(10), 80(10), 84(14), 100(10), 112(14), 120(10), 132(22),
156(26), 160(10), 168(14),
176(22), 180(10),
204(34), 208(26), 224(14), 228(38), 240(10), 252(14),
264(22), 272(34), 276(46),
300(10), 304(38),
312(26), 320(10), 336(14), 352(22)  and then the
required \\ DCA$(4, 2p+1;2p)$.
	
\end{proof}

\begin{theorem}\cite[Thm 2.4]{Yin2}\label{hdm-4hole}  Let  $n= p_1^{\alpha_1}\dots p_t^{\alpha_t}$,
 where $\alpha_i \geq 0$ and the prime factors $p_i\geq 5$ for $1 \leq i \leq t$.  Then there exists a cyclic HDM$(4, 4n;4)$.
\end{theorem}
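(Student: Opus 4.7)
The plan is to prove the result by induction on the number of prime factors of $n$ (counted with multiplicity), with a direct construction providing the base case and a standard product-plus-fill argument handling the inductive step.

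For the base case, take $n = p$ with $p \geq 5$ prime and construct a cyclic HDM$(4, 4p; 4)$ over $\mathbb{Z}_{4p}$ whose hole is the order-$4$ subgroup $H = \{0, p, 2p, 3p\}$. Since $\gcd(4, p) = 1$, the Chinese Remainder Theorem gives $\mathbb{Z}_{4p} \cong \mathbb{Z}_4 \oplus \mathbb{Z}_p$, under which $H$ corresponds to $\mathbb{Z}_4 \oplus \{0\}$. The goal is then to exhibit $4(p-1)$ rows of length $4$ in $\mathbb{Z}_4 \oplus \mathbb{Z}_p$ whose six column-pair difference multisets each cover $\mathbb{Z}_4 \oplus \mathbb{F}_p^{\ast}$ exactly once. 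A natural starter-adder construction fixes a primitive root $\theta$ of $\mathbb{F}_p$, chooses a single length-$4$ starter whose pairwise differences in the $\mathbb{Z}_p$-coordinate are all non-zero, and develops it multiplicatively by powers of $\theta$ (sweeping the $\mathbb{F}_p^{\ast}$-component) and additively over $\mathbb{Z}_4$ (sweeping the $\mathbb{Z}_4$-component). Coupling the $\mathbb{Z}_4$-offsets to the multiplicative orbit so that each $\mathbb{Z}_4$-value appears $(p-1)$ times in each of the six difference sets yields the required HDM.

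For the inductive step, write $n = pm$ where $p \geq 5$ is a prime factor of $n$, so every prime factor of $m$ is also at least $5$. By Theorem~\ref{prime-cdm} and iterated application of the second statement of Lemma~\ref{l:prod-ge}, a cyclic DM$(m, 4; 1)$ exists. Applying the first statement of Lemma~\ref{l:prod-ge} to the base-case cyclic HDM$(4, 4p; 4)$ together with this DM produces a cyclic HDM$(4, 4pm; 4m)$. The inductive hypothesis supplies a cyclic HDM$(4, 4m; 4)$, whose ground group is the order-$4m$ subgroup of $\mathbb{Z}_{4pm}$, and this sits naturally inside the order-$4m$ hole of the previously constructed array. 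Concatenating the two arrays (the HDM-to-HDM analogue of Lemma~\ref{l:insert}, in which one fills an HDM hole with a smaller HDM rather than with a DCA) yields a cyclic HDM$(4, 4n; 4)$.

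The main obstacle is the base case: producing a starter that simultaneously satisfies all six column-pair difference conditions for every prime $p \geq 5$. These conditions are not independent, and the split between $p \equiv 1 \pmod 4$ (where $\mathbb{F}_p$ already contains a primitive fourth root of unity, permitting a clean multiplicative construction) and $p \equiv 3 \pmod 4$ (where the $\mathbb{Z}_4$ summand must carry the fourfold symmetry on its own) is likely to require two separate starter recipes. Small primes such as $p = 5$ or $p = 7$ may need ad hoc verification before a uniform algebraic recipe takes over, and the careful choice of $\mathbb{Z}_4$-offsets is where I expect the bulk of the technical effort to lie.
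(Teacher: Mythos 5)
The paper does not actually prove this statement: it is imported verbatim from Yin \cite[Thm 2.4]{Yin2}, so there is no internal argument to compare yours against. Judged on its own terms, your recursive scaffolding is sound and matches the standard route: a cyclic DM$(m,4;1)$ exists by Theorem \ref{prime-cdm} and iterated use of the second part of Lemma \ref{l:prod-ge}; the first part of Lemma \ref{l:prod-ge} inflates a prime base case to a cyclic HDM$(4,4pm;4m)$; and filling the order-$4m$ hole with a cyclic HDM$(4,4m;4)$ is the legitimate HDM-analogue of Lemma \ref{l:insert}, since $(G\setminus H)\cup(H\setminus K)=G\setminus K$.

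The genuine gap is the base case, which you identify but do not close. For $n=p$ prime the entire content of the theorem is the existence of $4(p-1)$ rows over $\mathbb{Z}_4\oplus\mathbb{F}_p$ whose six pairwise difference multisets each equal $\mathbb{Z}_4\oplus\mathbb{F}_p^{\ast}$ exactly. Your sketch --- one starter developed multiplicatively by a primitive root and additively over $\mathbb{Z}_4$, with the $\mathbb{Z}_4$-offsets ``coupled'' to the multiplicative orbit --- is a restatement of what must be proved, not a proof. Observe that adding an element of $\mathbb{Z}_4\oplus\{0\}$ to an entire row changes no difference, so the $\mathbb{Z}_4$-components must vary per column and per row in such a way that all six offset-difference sequences are simultaneously equidistributed over $\mathbb{Z}_4$ while the $\mathbb{F}_p^{\ast}$-components sweep each nonzero residue exactly once for each pair of columns; you exhibit no starter, no offset schedule, and no verification, and you concede that the cases $p\equiv 1$ and $p\equiv 3\pmod 4$ and the small primes $5$ and $7$ may each need separate treatment. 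Until explicit starters are produced and checked --- which is where the work in Yin's paper actually lies --- the theorem is not established by this argument.
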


\begin{lemma}\label{spec:140}
There exists cyclic DCA$(4, 2p+1;2p)$ for $2p=$ {\rm 140, 196, 220, 260, 308, 340}.
\end{lemma}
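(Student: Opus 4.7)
The plan is to follow the template established in Lemmas \ref{spec:60} and \ref{spec:90}: for each target order $2p$, write $2p = 4nn'$ with $n = p_1^{\alpha_1}\cdots p_t^{\alpha_t}$ having all prime factors $p_i \geq 5$, and with $n'$ a prime satisfying $n' \geq 5$. Theorem \ref{hdm-4hole} then yields a cyclic HDM$(4, 4n; 4)$, Theorem \ref{prime-cdm} yields a cyclic DM$(n', 4; 1)$, and Lemma \ref{l:prod-ge} (inside Corollary \ref{c:prod}) combines these into a cyclic HDM$(4, 4nn'; 4n') = $ HDM$(4, 2p; 4n')$. Finally, Lemma \ref{l:insert} fills the hole of size $h = 4n'$ using an already-established cyclic DCA$(4, h+1; h)$ that satisfies P1 and P2, which produces the required cyclic DCA$(4, 2p+1; 2p)$ satisfying P1 and P2.

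The natural factorizations are $2p(h) = 140(20), 220(20), 260(20), 340(20)$ (each with $n' = 5$, so $n \in \{7, 11, 13, 17\}$), together with $196(28)$ and $308(28)$ (with $n' = 7$ and $n \in \{7, 11\}$). For the first four orders, a cyclic DCA$(4, 21; 20)$ satisfying P1 and P2 is already on the list of small cases from \cite{RSS} and \cite{LvR}, so these cases follow verbatim from the mechanism of Lemma \ref{spec:60}: check that each $n \in \{7, 11, 13, 17\}$ satisfies the hypothesis of Theorem \ref{hdm-4hole}, that $n' = 5$ satisfies the hypothesis of Theorem \ref{prime-cdm}, and that P1 and P2 are preserved by the construction of Corollary \ref{c:prod}.

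I expect the main obstacle to be the $h = 28$ case, needed for $2p = 196$ and $2p = 308$. A cyclic DCA$(4, 29; 28)$ satisfying P1 and P2 is not in the small-case list from \cite{RSS,LvR}, does not lie in the residue classes $2p \equiv 14, 22, 38, 46 \pmod{48}$ handled in \cite{DDK}, and indeed $2p = 28$ appears on the list of orders left open in Lemma \ref{rem-spec}. The proof of Lemma \ref{spec:140} is therefore genuinely contingent on first supplying a cyclic DCA$(4, 29; 28)$ satisfying P1 and P2, either as a direct small construction or via one of the new constructions to be developed in Section \ref{constructions}. Once that ingredient is in hand the remaining work is purely clerical, and the same template then settles all six orders simultaneously.
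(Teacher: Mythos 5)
Your proposal follows exactly the paper's route --- Corollary \ref{c:prod} combined with Theorem \ref{prime-cdm} and Theorem \ref{hdm-4hole} --- and your factorizations agree with the paper's except that for $2p=140$ the paper takes $h=28$ (via HDM$(4,20;4)$ and DM$(7,4;1)$) where you take $h=20$; both work. You are also right that the $h=28$ cases hinge on a cyclic DCA$(4,29;28)$ satisfying P1 and P2, which is not in the small-case lists; the paper discharges this dependency by computer search in a later lemma of the same section (the list $2p=24^*, 28^*, 32,\dots$), so your flagged contingency is exactly how the paper resolves it.
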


\begin{proof}
Corollary \ref{c:prod} together with Theorem  \ref{prime-cdm} and Theorem \ref{hdm-4hole}  can be used  first to construct a cyclic HDM$(4, 2p;h)$ with
$2p(h)=$ {\rm 140(28), 196(28), 220(20), 260(20), 308(28), 340(20)} and then the
required DCA$(4, 2p+1;2p)$.

\end{proof}

\begin{theorem}\cite[Thm 3.10]{Ge}\label{3-cdm} A cyclic DM$(3^i, 5;1)$ exists for all $i\geq 3$.
\end{theorem}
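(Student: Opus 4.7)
The natural linear construction of a cyclic DM$(n,5;1)$ would take five slopes $a_0,\ldots,a_4\in\mathbb{Z}_n$ and set $q(i,j)=a_j i$; this succeeds precisely when every difference $a_j-a_{j'}$ is a unit in $\mathbb{Z}_n$. Over $\mathbb{Z}_{3^i}$ the units are the elements coprime to $3$, so the slopes must lie in pairwise distinct residue classes modulo $3$. Since $\mathbb{Z}_{3^i}/3\mathbb{Z}_{3^i}\cong\mathbb{Z}_3$ has only three cosets, this naive approach yields at most three valid slopes, two short of what is needed. This rules out any uniform algebraic family and forces us to build base cases separately and propagate multiplicatively.

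My plan proceeds in two stages. The first is to exhibit cyclic DM$(3^j,5;1)$ directly for the three base powers $j=3,4,5$. I would search for a short list of initial blocks whose orbit under the cyclic $\mathbb{Z}_{3^j}$-action fills the $3^j\times 5$ array with the required difference profile. The quotient map $\mathbb{Z}_{3^j}\to\mathbb{Z}_3$ provides strong necessary conditions: for each pair of columns, the multiset of mod-$3$ differences must be compatible with uniform coverage of $\mathbb{Z}_{3^j}$. Using this constraint to prune, the problem reduces either to a bounded combinatorial search or to the identification of a specific algebraic pattern.

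The second stage propagates by the product construction of Lemma~\ref{l:prod-ge}: given cyclic DMs of orders $a$ and $b$ with the same number of columns, one obtains a cyclic DM of order $ab$. A short Frobenius-type argument shows that every integer $i\ge 3$ can be written as $i=3a+4b+5c$ with $a,b,c\ge 0$, since $\{3,4,5\}$ generates all non-negative integers from $3$ on. Thus $3^i=27^a\cdot 81^b\cdot 243^c$ is a product of the three established base orders, and iterated application of the product construction yields the cyclic DM$(3^i,5;1)$ for every $i\ge 3$.

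The main obstacle is the base case $j=3$, where the parameters are tightest and no obvious algebraic shortcut is available; the mod-$3$ coset analysis trims the search space but still leaves a nontrivial combinatorial problem to resolve explicitly. The cases $j=4$ and $j=5$ are similar in spirit, differing mainly in computational cost.
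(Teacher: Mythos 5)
This theorem is not proved in the paper at all; it is imported verbatim from Ge \cite[Thm 3.10]{Ge}, so there is no in-paper argument to match your proposal against. Judged on its own terms, your proposal has the right recursive scaffolding but a genuine gap where the mathematical content lives. Your observations are sound as far as they go: the naive linear construction $q(i,j)=a_ji$ does cap out at three columns over $\mathbb{Z}_{3^i}$ because the slopes must be pairwise non-congruent mod $3$; the product construction of Lemma~\ref{l:prod-ge} does apply here (Ge's product of cyclic DMs does not require coprime orders --- the entries $a_{ij}+g\,b_{i'j}$ over $\mathbb{Z}_{gh}$ work even when $g$ and $h$ share factors); and the numerical-semigroup step is fine, since $\{3,4,5\}$ represents every integer $\ge 3$ (indeed simple induction from bases $3,4,5$ with step $+3$ suffices). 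So if cyclic DM$(3^j,5;1)$ exist for $j=3,4,5$, the theorem follows exactly as you say.

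The gap is that you never produce those three base arrays, and you acknowledge as much. The mod-$3$ coset condition you propose as a pruning device is only a necessary condition on the reduction of the array to $\mathbb{Z}_3$; it does not come close to determining a DM$(27,5;1)$, and ``a bounded combinatorial search would resolve it'' is a plan, not a proof --- particularly since the non-existence of DM$(3,5;1)$ and DM$(9,5;1)$ shows the pattern does not start until $3^3$, so there is no small-case intuition to lean on. In Ge's paper the burden of Theorem 3.10 is precisely the explicit exhibition of the base matrices (for $3^3$, $3^4$, $3^5$), after which the multiplicative closure is the easy part. Until you write down those arrays (or an algebraic construction that yields them), the proposal establishes only the reduction of the theorem to its hard kernel, not the theorem itself.
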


\begin{lemma}
There exists cyclic DCA$(4, 2p+1;2p)$ for $2p=$ {\rm 216, 270, 324}.
\end{lemma}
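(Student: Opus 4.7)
The plan is to apply Corollary~\ref{c:prod} in the same spirit as the three preceding spectrum lemmas, but this time drawing the cyclic difference matrix from Theorem~\ref{3-cdm} rather than Theorem~\ref{prime-cdm}. Each of $216, 270, 324$ factors as $27 \cdot n$ with $n \in \{8, 10, 12\}$, and Theorem~\ref{3-cdm} with $i = 3$ yields a cyclic DM$(27, 5; 1)$; deleting any column produces the cyclic DM$(27, 4; 1)$ that will serve as the ``$n'$'' input in Corollary~\ref{c:prod}. This is the only natural choice for these three orders, since none of $216, 270, 324$ admits a prime factor $\geq 5$ and $\geq 5$ factor (the case $270 = 54 \cdot 5$ is the sole partial exception), so the previous lemmas based on Theorem~\ref{prime-cdm} do not apply.

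Next I would verify that Theorem~\ref{hdm-4prime} supplies the needed cyclic HDMs: writing $n = 2 n_0$ with $n_0 \in \{4, 5, 6\}$, the factorizations $n_0 = 2^2$, $n_0 = 5$, $n_0 = 2 \cdot 3$ each satisfy $(\alpha, \beta) \neq (1, 0)$, so Theorem~\ref{hdm-4prime} produces cyclic HDM$(4, 8; 2)$, HDM$(4, 10; 2)$, and HDM$(4, 12; 2)$ respectively. Feeding HDM$(4, n; 2)$, the cyclic DM$(27, 4; 1)$, and a cyclic DCA$(4, 55; 54)$ satisfying P1 and P2 into Corollary~\ref{c:prod} (with $h = 2$ and $n' = 27$) then delivers the desired cyclic DCA$(4, 2p + 1; 2p)$ satisfying P1 and P2 for each $2p \in \{216, 270, 324\}$.

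The principal obstacle is supplying the intermediate DCA$(4, 55; 54)$, since $54 = 2 \cdot 3^3$ has no prime factor $\geq 5$ and is therefore not produced by any of the preceding spectrum lemmas in the section. My plan is to bootstrap its existence by a further invocation of the same two tools: Theorem~\ref{hdm-4prime} with $n_0 = 27 = 3^3$ (for which $(\alpha, \beta) = (0, 3) \neq (1, 0)$) yields a cyclic HDM$(4, 54; 2)$, and Lemma~\ref{l:insert} then fills the $2$-hole using an explicit cyclic DCA$(4, 3; 2)$ over $\mathbb{Z}_2$ (a $3 \times 4$ array in which each column is a permutation of $\{0, 0, 1\}$ arranged so that P1 and P2 hold trivially, since $G \setminus \{0\} = \{1\}$). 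Once DCA$(4, 55; 54)$ is in hand, the three target orders follow uniformly from the Corollary~\ref{c:prod} construction outlined above, which is why it is natural to package them as a single lemma immediately after Theorem~\ref{3-cdm}.
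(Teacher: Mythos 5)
Your main construction is exactly the paper's: for each of $2p=216,270,324$ you build a cyclic HDM$(4,2p;54)$ by combining an HDM$(4,n;2)$ from Theorem~\ref{hdm-4prime} (with $n=8,10,12$) with the cyclic DM$(27,4;1)$ coming from Theorem~\ref{3-cdm}, and then fill the hole of size $54$ via Corollary~\ref{c:prod}. The genuine gap is in how you supply the required cyclic DCA$(4,55;54)$ satisfying P1 and P2. Your bootstrap rests on an ``explicit cyclic DCA$(4,3;2)$ over $\mathbb{Z}_2$'' whose columns are permutations of $\{0,0,1\}$, with P2 claimed to hold trivially. No such array exists: P2 requires that for each pair among the first three columns \emph{both} differences taken over the two non-zero rows equal $1$; if column $0$ differs from column $1$ in both of those rows and also differs from column $2$ in both of those rows, then columns $1$ and $2$ agree in both rows, so $\Delta_{1,2}=\{0,0\}$, which violates P2 and even the basic covering requirement (the element $1$ never appears in $\Delta_{1,2}$). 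Hence the hole of size $2$ in your HDM$(4,54;2)$ cannot be filled, and since $54=2\cdot 3^3$ has no prime factor $\ge 5$, none of the recursive results of the section produce a DCA$(4,55;54)$ either. The paper obtains this ingredient from the explicit computer-generated array listed for $2p=54$ in a later lemma of the same section; once you substitute that input for your nonexistent DCA$(4,3;2)$ step, your argument for $216$, $270$ and $324$ goes through and coincides with the paper's proof.
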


\begin{proof}
Corollary \ref{c:prod} together with Theorem  \ref{3-cdm}  and Theorem \ref{hdm-4prime}  can be used to first construct a cyclic HDM$(4, 2p;h)$ with
$2p(h)=$ 216(54), 270(54), 324(54) and then the
required DCA$(4, 2p+1;2p)$.

\end{proof}

The next result from Yin's paper \cite{Yin2} is interesting in that it allows us to construct difference covering arrays and so nearly orthogonal Latin squares
of order $6n$, and gives many values that were previously unresolved (the obstruction was the non-existence of MNOLS of order $6$).

\begin{theorem} \cite[Thm 2.2]{Yin2}\label{hdm-6} Let $n$ be a positive integer of the form $p_1^{\alpha_1}p_2^{\alpha_2}\dots p_t^{\alpha_t}$, where $\alpha_i \geq 0$ and
the prime factors $p_i\geq 5$ for $1 \leq i \leq t$. Then there exists a cyclic HDM$(4, 6n;6)$.
\end{theorem}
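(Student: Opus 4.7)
The plan is to induct on the number of prime factors of $n$ counted with multiplicity, reducing the problem to a base case for a single prime $p\ge 5$ and then bootstrapping to prime powers before combining across distinct primes.

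First I would construct the base case: a cyclic HDM$(4,6p;6)$ over $\mathbb{Z}_{6p}$ for every prime $p\ge 5$. Because $\gcd(6,p)=1$, CRT gives $\mathbb{Z}_{6p}\cong\mathbb{Z}_6\oplus\mathbb{Z}_p$ and the size-$6$ hole becomes $\mathbb{Z}_6\oplus\{0\}$, so one seeks a $6(p-1)\times 4$ matrix with rows indexed (say) by $\mathbb{Z}_6\times\mathbb{Z}_p^{*}$ whose pairwise column differences hit $\mathbb{Z}_6\oplus\mathbb{Z}_p^{*}$ exactly once. A naive multiplicative Ansatz of the form $q((u,a),j)=(A_j u+B_j a,\, c_j a)$ is not quite flexible enough because $\mathbb{Z}_6$ has only two units; realistically the construction splits into cases according to the residue of $p$ modulo a small integer (e.g.\ $12$) and is assembled from a small number of explicit base sequences in $\mathbb{F}_p$ whose pairwise differences cover $\mathbb{F}_p^{*}$ in a controlled way, with all $\binom{4}{2}=6$ column-pair conditions checked simultaneously.

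Once the base is in hand, I would extend to prime powers by iterating Lemma \ref{l:prod-ge}: a cyclic HDM$(4,6p^{\alpha-1};6)$ combined with the cyclic DM$(p,4;1)$ of Theorem \ref{prime-cdm} yields a cyclic HDM$(4,6p^\alpha;6p)$, whose enlarged hole is then filled in the usual way by a scaled copy of HDM$(4,6p;6)$ to give a cyclic HDM$(4,6p^\alpha;6)$. Finally, for general $n=q_1\cdots q_t$ with $q_i=p_i^{\alpha_i}$, I would induct on $t$: given cyclic HDM$(4,6q_1\cdots q_{t-1};6)$ and the cyclic DM$(q_t,4;1)$ provided by the explicit rule $q(i,j)=j\cdot i$ (valid since $\gcd(q_t,6)=1$ makes $\pm 1,\pm 2,\pm 3$ units of $\mathbb{Z}_{q_t}$), Lemma \ref{l:prod-ge} and a final hole-filling with HDM$(4,6q_t;6)$ produce the desired cyclic HDM$(4,6n;6)$.

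The main obstacle is the base case: producing an explicit cyclic HDM$(4,6p;6)$ for every prime $p\ge 5$. The product and fill-in machinery is standard once the base HDMs are in hand, but the base construction has to contend with the narrow unit structure of $\mathbb{Z}_6$ (only $\{1,5\}$), which obstructs simple algebraic templates and typically forces a case split combined with a small computer verification for the smallest primes. Throughout the proof one must be careful to preserve cyclicity rather than merely abelian-group structure; the hypothesis $p_i\ge 5$ is used precisely here, since it secures $\gcd(6,n)=1$ and hence the cyclic splitting $\mathbb{Z}_{6n}\cong\mathbb{Z}_6\oplus\mathbb{Z}_n$ underlying every step.
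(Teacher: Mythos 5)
This statement is quoted in the paper directly from Yin \cite{Yin2} (his Theorem 2.2) and the paper supplies no proof of its own, so there is nothing internal to compare against; your proposal therefore has to stand on its own as a proof, and as it stands it has a genuine gap. The outer scaffolding is fine and essentially the standard reduction: the explicit multiplicative DM$(q,4;1)$ with $q(i,j)=ji$ does work whenever $\gcd(q,6)=1$ (the column differences $j-j'$ lie in $\{\pm 1,\pm 2,\pm 3\}$, all units), Lemma \ref{l:prod-ge} correctly inflates an HDM$(4,6m;6)$ to an HDM$(4,6mq;6q)$, and filling the enlarged hole with a scaled HDM$(4,6q;6)$ over the (unique) cyclic subgroup of order $6q$ is the standard ``filling in holes'' step, analogous to Lemma \ref{l:insert} and easily verified. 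So the theorem really does reduce to the single-prime case.

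The problem is that the single-prime case --- a cyclic HDM$(4,6p;6)$ for \emph{every} prime $p\ge 5$ --- is the entire mathematical content of the result, and your proposal does not construct it. You correctly set up the CRT reformulation (a $6(p-1)\times 4$ array over $\mathbb{Z}_6\oplus\mathbb{Z}_p$ whose six pairwise difference sets each cover $\mathbb{Z}_6\times\mathbb{Z}_p^{*}$ exactly once) and correctly observe that a naive ansatz $q((u,a),j)=(A_ju+B_ja,\,c_ja)$ cannot work because $\mathbb{Z}_6$ has only two units, but then you only \emph{describe} what a construction would ``realistically'' look like (a case split on $p$ modulo a small integer, explicit base sequences in $\mathbb{F}_p$, small computer checks). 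No such sequences are exhibited, no case analysis is carried out, and no argument (e.g.\ a character-sum/Weil estimate for large $p$ together with finite verification) is given that the required sequences exist for all $p\ge 5$. Naming the obstacle is not the same as overcoming it; until the base family is actually produced, the proposal establishes only the (routine) reduction and not the theorem.
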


\begin{corollary}\label{ls:sixes}  Let $n$ be an integer of the form $p_1^{\alpha_1}p_2^{\alpha_2}\dots p_t^{\alpha_t}$, where $\alpha_i \geq 0$ and the prime factors $p_i\geq 5$ for $1 \leq i \leq t$. Then there exists a cyclic DCA$(4, 6n+1;6n)$ satisfying  P1 and P2. Consequently there exists  cyclic DCA$(4, 2p+1;2p)$ for $2p=30^*, 42^*, 66^*, 78, 102^*, 114^*, 138^*, 150, 174^*,\\
186, 210, 222, 246, 258, 282,
294, 318, 330, 354$.
\end{corollary}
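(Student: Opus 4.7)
The plan is to apply Lemma \ref{l:insert} with the holey difference matrix supplied by Theorem \ref{hdm-6} and the small seed array exhibited at the start of the paper. First I would invoke Theorem \ref{hdm-6} to obtain, for any $n$ with prime factorisation $p_1^{\alpha_1}\cdots p_t^{\alpha_t}$ satisfying $p_i\geq 5$, a cyclic HDM$(4,6n;6)$ over $\mathbb{Z}_{6n}$ whose hole is the unique subgroup of order $6$, namely $H=\{0,n,2n,3n,4n,5n\}\cong \mathbb{Z}_6$.

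Next I would use the cyclic DCA$(4,7;6)$ written as $B^T$ in the introduction (taken from \cite{RSS}). A direct inspection shows that each of its first three columns is a permutation of the multiset $\{0,0,1,2,3,4,5\}$ with the symbol $0$ appearing twice, and that its last row and last column consist of zeros; so P1 holds. Likewise, the pairwise difference multisets of distinct non-final columns cover $\mathbb{Z}_6\setminus\{0\}$ with the single repeated value $3=6/2$, matching Lemma \ref{diff} and verifying P2. Applying Lemma \ref{l:insert} with $k=4$, $G=\mathbb{Z}_{6n}$ and the subgroup $H\cong \mathbb{Z}_6$ then produces a cyclic DCA$(4,6n+1;6n)$ over $\mathbb{Z}_{6n}$ satisfying P1 and P2, which proves the first assertion of the corollary.

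To obtain the concrete list, I would check that for each stated order $2p$ the integer $n=2p/6$ lies in $\{5,7,11,13,17,19,23,25,29,31,35,37,41,43,47,49,53,55,59\}$, each of which factors into primes that are at least $5$; hence Theorem \ref{hdm-6} applies to every $n$ on this list and yields the corresponding DCA$(4,2p+1;2p)$. Combined with the Raghavarao--Shrikhande--Shrikhande construction, this gives row-complete $3$-MNOLS$(2p)$ for every listed $2p$, settling in particular the orders flagged with $*$ that were still open in Lemma \ref{rem-spec}.

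There is no real obstacle here: the corollary is a direct assembly of three previously established ingredients (the insertion lemma, the family of HDMs of Theorem \ref{hdm-6}, and the seed DCA from \cite{RSS}). The only verifications required are the routine ones that the seed array satisfies P1 and P2 and that each listed $n$ has all prime factors at least $5$; both are immediate by inspection.
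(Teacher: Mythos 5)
Your proposal is correct and matches the paper's (implicit) argument exactly: the corollary is stated as an immediate consequence of Theorem \ref{hdm-6} via Lemma \ref{l:insert}, using the cyclic DCA$(4,7;6)$ from \cite{RSS} as the seed in the hole, and your verification that each listed $n=2p/6$ has all prime factors at least $5$ is the only remaining check. Nothing is missing.
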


The following result can be verified using direct constructions given in the later Section \ref{constructions}.

 \begin{lemma}
There exists cyclic DCA$(4, 2p+1;2p)$ for $2p=26^*$, {\rm 266}, $2p=$ {\rm  40, 56, 88, 104, 136, 152,
184, 200, 232, 248, 280, 296, 328, 344} and $2p=34^*$, $58^*$, $82^*$, $106^*$, $130^*$, {\rm 154},
$178^*$, {\rm 202, 226, 250, 274,
298, 322, 346}.
\end{lemma}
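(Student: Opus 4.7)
The proof plan is to partition the listed orders into three groups, each matched to one of the explicit constructions promised in Section \ref{constructions}, and to extend the base cases using the product machinery of Corollary \ref{c:prod} and Lemma \ref{l:prod}.

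The orders $\{40, 56, 88, 104, 136, 152, 184, 200, 232, 248, 280, 296, 328, 344\}$ are precisely $2p = 8n$ for $n$ running over the odd integers in $[5,43]$ coprime to $3$. For these I would take the cyclic DCA$(4,9;8)$ already available from the small-orders enumeration earlier in this section and apply Lemma \ref{l:prod} with a cyclic DM$(n,4;1)$. The latter is supplied by Theorem \ref{prime-cdm} when $n$ is prime; for $n=25=5\cdot 5$ and $n=35=5\cdot 7$ it is obtained from the second assertion of Lemma \ref{l:prod-ge} as a cyclic product of DM$(5,4;1)$'s and DM$(7,4;1)$.

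The second family is the arithmetic progression $\{34, 58, 82, 106, 130, 154, 178, 202, 226, 250, 274, 298, 322, 346\} = \{24k+10 : 1\leq k\leq 14\}$. I expect Section \ref{constructions} to provide an explicit parametric construction of a cyclic DCA$(4, 24k+11; 24k+10)$ satisfying P1 and P2 that is valid at least on the stated range of $k$, thereby handling this entire family in one stroke.

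Finally, $\{26, 266\}$ is dealt with as follows. The order $2p=26$ is a genuinely new small case that is produced by a direct construction given in Section \ref{constructions}. For $2p=266 = 38\cdot 7$, I would take the cyclic HDM$(4,38;2)$ supplied by Lemma \ref{hdm-2prime} (with $n=19$), the cyclic DM$(7,4;1)$ from Theorem \ref{prime-cdm}, and the cyclic DCA$(4,15;14)$ from the earlier small-orders enumeration, and combine them via Corollary \ref{c:prod}. The principal technical obstacle in carrying out this plan is verifying that the three direct constructions really satisfy the stringent side-conditions P1 and P2 --- in particular that the repeated pairwise difference among the first three columns is forced to equal $p$ as required by Lemma \ref{diff} --- and not merely the bare difference-covering property. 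Once these base cases are in hand, P1 and P2 are automatically transmitted through the product constructions by the arguments given in the proofs of Lemma \ref{l:insert} and Lemma \ref{l:prod}.
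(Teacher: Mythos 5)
Your plan diverges from the paper's in an essential way, and for one of the three families it breaks down. The paper proves this lemma purely by citing three direct parametric constructions from Section \ref{constructions}: Corollary \ref{cor-2m} (with $i=0$ and $i=4$) supplies $2p=26$ and $266$; Corollary \ref{cor-4m+1} supplies the orders $16k+8$ with $k\not\equiv 1 \bmod 3$, which are exactly $40, 56, \dots, 344$; and Theorem \ref{thm-6mu+4} supplies the orders $6\mu+4$ with $\mu$ odd, which covers $34, 58, \dots, 346$. Your treatment of the third family and of $2p=26$ matches this in spirit, and your alternative derivation of $2p=266$ via Corollary \ref{c:prod} from a cyclic HDM$(4,38;2)$, a cyclic DM$(7,4;1)$ and a cyclic DCA$(4,15;14)$ is a legitimate substitute: the hole has order $14$ and the repeated difference $7\cdot 19=133=266/2$ comes out correctly, as in the proof of Lemma \ref{l:insert}.

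The genuine gap is your handling of $\{40,56,\dots,344\}$. Lemma \ref{l:prod} with $n'=8$ requires, besides the cyclic DCA$(4,9;8)$, a cyclic DM$(8,4;1)$ --- and no cyclic DM$(n,3;1)$, let alone DM$(n,4;1)$, exists for even $n$: normalising one column to zero forces the other two columns and their mutual difference all to be permutations of ${\mathbb Z}_n$, so the entries of the difference column sum to $0$, yet $\sum_{x\in{\mathbb Z}_n}x = n(n-1)/2 \equiv n/2 \not\equiv 0 \pmod n$ when $n$ is even. Swapping the roles of $n$ and $n'$ does not help (you still need a cyclic DM over the even factor, and a DCA satisfying P1 and P2 must have even order by Lemma \ref{diff}), and the paper provides no HDM with a hole of size $8$ that could be fed into Corollary \ref{c:prod} instead. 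So none of the product machinery in the paper reaches these orders from the order-$8$ base case; the paper handles them with the direct construction of Theorem \ref{QD(4m, 3)} and Corollary \ref{cor-4m+1}, which builds a cyclic DCA$(4,16k+9;16k+8)$ explicitly by taking $m=4k+2$ and $f=2m-2$.
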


\begin{proof}
Corollary \ref{cor-2m}, given in Section \ref{constructions}, verifies the existence of cyclic DCA$(4, 2p+1;2p)$ with
$2p=26^*, 266.$

Corollary \ref{cor-4m+1}, given in Section \ref{constructions}, verifies the existence of cyclic DCA$(4, 2p+1;2p)$ with
$2p=$ 40, 56, 88, 104, 136, 152, 184, 200, 232, 248, 280, 296, 328, 344.

Theorem  \ref{thm-6mu+4}, given in Section \ref{constructions}, verifies the existence of cyclic DCA$(4, 2p+1;2p)$ with
$2p=34^*, 58^*, 82^*, 106^*, 130^*$, 154, $178^*$, 202, 226, 250, 274, 298, 322, 346.

\end{proof}

\begin{lemma}
There exists cyclic DCA$(4, 2p+1;2p)$ for $2p=24^*, 28^*$, {\rm 32}, $36^*$, {\rm 44, 48}, $52^*$, $54^*$.
\end{lemma}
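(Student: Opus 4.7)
The plan is to handle each of the eight orders $2p\in\{24,28,32,36,44,48,52,54\}$ by exhibiting an explicit $(2p+1)\times 4$ cyclic difference covering array with entries from $\mathbb{Z}_{2p}$ and then verifying Properties P1 and P2 directly. None of these orders are reachable via the product constructions of Corollary \ref{c:prod} and Lemma \ref{l:prod} combined with the holey difference matrices of Lemmas \ref{hdm-2prime}, \ref{hdm-23}, \ref{hdm-4prime}, \ref{hdm-4hole}, \ref{hdm-6} (the required factorisations either fail or force a hole of size $<5$ that itself lacks a suitable small DCA). Since no recursive construction applies, the only realistic route is a direct presentation of each array, likely obtained by computer search constrained by the structural requirements of Lemma \ref{diff}.

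Concretely, I would construct each array $Q=[q(i,j)]$, $0\le i\le 2p$, $0\le j\le 3$, in standard form: set $q(2p,j)=0$ for $j=0,1,2,3$ and $q(i,3)=0$ for $0\le i\le 2p$, and take column $0$ to be the identity permutation on $\mathbb{Z}_{2p}$ with a repeated $0$ placed in a convenient row. A search over the remaining two columns then fixes the arrays. For the verification, Lemma \ref{diff} tells us exactly what the difference multi-sets should look like: each of $\Delta_{0,1},\Delta_{0,2},\Delta_{1,2}$ must equal $\{0,1,2,\dots,p,p,\dots,2p-1\}$, while $\Delta_{j,3}=\mathbb{Z}_{2p}$ follows automatically because column $j$ itself meets P1 and is a multiset permutation of $\{0,0,1,\dots,2p-1\}$.

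The core of the proof is then simply to display the eight arrays in a single figure or table environment and assert that inspection of the three nontrivial difference multisets confirms both P1 and P2. For instance, for $2p=24$ one records the three non-trivial columns of $Q$ as sequences of length $25$ with the forced entries $q(24,j)=0$; the other seven orders are presented analogously. Since each verification is a finite mechanical check on at most $54$ rows and three pairs of columns, no further theoretical argument is needed.

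The main obstacle is locating such arrays in the first place: for $2p\in\{36,44,48,52,54\}$ the search space is large, and backtracking must be pruned aggressively by (i) insisting on the structural form of Lemma \ref{diff}, (ii) fixing the standard-form zeros, and (iii) exploiting symmetry (column permutations and the $x\mapsto -x$ automorphism of $\mathbb{Z}_{2p}$). Once a single valid example is found in each order, the writeup reduces to tabulation and a short paragraph noting that the $\Delta_{j,j'}$ multisets are as prescribed by Lemma \ref{diff}, which immediately gives the desired cyclic DCA$(4,2p+1;2p)$ satisfying P1 and P2.
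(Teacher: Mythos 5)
Your approach is exactly the paper's: these eight orders are settled by explicit cyclic arrays found by computer search and then verified mechanically against P1 and P2 (the paper fixes column $0$ as the identity permutation, column $1$ as the odd residues followed by the even residues, and lists column $2$ explicitly for each order, with the all-zero last row and column understood). The only thing missing from your proposal is the data itself, which for a result of this kind is the entire content of the proof, so as written it is a correct plan rather than a complete argument.
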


\begin{proof}
These results have been verified by computer searches. The first column of the DCA$(4, 2p+1, 2p)$ is given by
$[0, 1, 2, \ldots, 2p-1]$, the second column by $[1, 3, \ldots, 2p-1, 2, 4, \ldots, 2p-2]$ and the third column
by

\noindent $2p=24$: [2, 0, 3, 1, 14, 21, 20, 19, 23, 15, 6, 18, 16, 10, 17, 8, 11,
   22, 5, 13, 4, 9, 7, 12]

\noindent $2p=28$: [2, 0, 3, 1, 11, 16, 22, 25, 20, 23, 4, 8, 21, 5, 18, 10, 19,
    13, 24, 27, 7, 26, 15, 9, 6, 14, 17, 12]

\noindent $2p=32$: [2, 0, 3, 6, 1, 13, 22, 30, 21, 25, 28, 26, 7, 5, 23, 20, 12,
    10, 24, 17, 31, 15, 29, 27, 11, 14, 4, 9, 8, 19, 18, 16]

\noindent $2p=36$: [5, 35, 13, 20, 11, 9, 1, 31, 10, 2, 30, 33, 4, 34, 32, 25, 28, 16,
    27, 22, 3, 29, 19, 24, 18, 15, 6, 23, 17, 7, 0, 8, 14, 12, 21, 26]

\noindent $2p=44$: [39, 13, 26, 21, 35, 3, 17, 16, 40, 28, 38, 25, 6, 10, 34, 5, 18, 30,
43, 15, 19, 36, 7, 24, 32, 14, 4, 0, 31, 12, 2, 9, 23, 37, 11, 42,
41, 29, 20, 1, 33, 27, 8, 22]

\noindent $2p=48$: [5, 41, 23, 40,  1, 39, 34, 25, 28,  8,  4,  9, 21, 30, 43, 18, 12,  2, 42, 45,
     32, 37, 33,  0, 26, 15, 13, 22, 10, 35, 44,  7, 36, 16, 27, 19, 46, 38,
     3, 47, 31, 29, 17, 14, 11, 24, 20,  6]

\noindent $2p=52$: [18, 12, 50, 37, 16, 6, 45, 4, 31, 34, 47, 21, 29, 2, 5, 22, 38, 3, 39,
    27, 0, 15, 51, 7, 28, 24, 42, 40, 48, 32, 9, 26, 20, 11, 1, 41, 19,
    35, 43, 13, 49, 33, 14, 17, 46, 8, 36, 23, 10, 30, 25, 44]

\noindent $2p=54$: [6, 5, 31, 27, 20, 38, 19, 4, 30, 51, 3, 52, 49, 14, 48, 23, 41, 12, 25,
    0, 32, 40, 21, 50, 9, 45, 16, 1, 46, 11, 28, 42, 47, 35, 39, 2, 22, 13,
    34, 33, 24, 44, 15, 53, 7, 17, 37, 36, 26, 18, 10, 43, 29, 8].

\end{proof}

For the remaining values
64, 68, 72, 74, 76, 92, 96, 108, 116, 122, 124, 128, 144, 146, 148, 162, 164, 172, 188, 192, 194, 212, 218, 236, 244, 256, 268, 284, 288, 292, 314, 316, 332, 348, 356
we were unable to construct cyclic difference covering arrays however for completeness and to answer questions about the spectrum we give full details verifying existence.
 It should be noted that it is possible to construct DCA$(4, 2p+1;2p)$ satisfying P1 and P2 for some of these orders however our construction does not give cyclic difference covering arrays and so the details have been omitted here.

For the Li and van Rees conjecture \cite[Conjecture 5.1]{LvR} we require two more results from their paper.
The second result uses group divisible designs:  A ${\mathcal K}$-{\em group divisible design} of {\em type} $g_1^{a_1}g_2^{a_2}\dots g_s^{a_s}$   is a partition ${
\mathcal G}$ of a finite set ${\mathcal V}$, of cardinality $v=\sum_{i=1}^s a_ig_i$, into $a_i$ {\em groups} of size $g_i$, $1\leq i\leq s$,
together with a family of subsets ({\em blocks}) ${\mathcal B}$  of ${\mathcal V}$ such that: 1) if $B \in {\mathcal  B}$,
then $|B| \in {\mathcal  K}$, 2) every pair of distinct elements of ${\mathcal V}$ occurs in $1$ block of ${\mathcal B}$ or $1$ group of ${\mathcal G}$ but not both, and  3) $|{\mathcal G}| > 1$.

\begin{theorem}\label{vanrees1}\cite[Thm 4.1]{LvR} Suppose there exists $k$-MNOLS$(2p)$, $k$-MOLS$(2p)$, and $k$-MOLS$(n)$. Then there exists $k$-MNOLS$(2pn)$.
\end{theorem}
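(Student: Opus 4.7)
The plan is to give a direct-product construction that combines all three ingredients. First, I would invoke the standard equivalence $k$-MOLS$(n) \Leftrightarrow k$-IMOLS$(n)$ (see, e.g., \cite{colbourn}) to replace the hypothesised MOLS$(n)$ by an idempotent set $L_1, \ldots, L_k$ with $L_s(i,i) = i$ for all $s, i$. The key payoff is the following alignment property: for any $s_1 \neq s_2$ and any $(c,d) \in \mathbb{Z}_n^2$, the unique cell $(a,b)$ with $(L_{s_1}(a,b), L_{s_2}(a,b)) = (c,d)$ (guaranteed by orthogonality) satisfies $a = b$ if and only if $c = d$, in which case $a = b = c = d$.

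Writing $M_1, \ldots, M_k$ for the given $k$-MNOLS$(2p)$ and $N_1, \ldots, N_k$ for the given $k$-MOLS$(2p)$, I identify the symbol set of the target square with $\mathbb{Z}_{2p} \times \mathbb{Z}_n$ and define, for $1 \leq s \leq k$,
\[
T_s((x,a),(y,b)) \;=\; \begin{cases} (M_s(x,y),\, L_s(a,b)) & \text{if } a = b, \\ (N_s(x,y),\, L_s(a,b)) & \text{if } a \neq b. \end{cases}
\]
A routine check then establishes that each $T_s$ is a Latin square of order $2pn$: for any fixed row $(x,a)$, the second coordinate $L_s(a,b)$ exhausts $\mathbb{Z}_n$ once as $b$ varies and, for each such $b$, the first coordinate (be it $M_s(x,y)$ or $N_s(x,y)$) exhausts $\mathbb{Z}_{2p}$ as $y$ varies; columns are symmetric.

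The main computation is pairwise near-orthogonality. Fix $s_1 \neq s_2$ and a candidate symbol pair $((u,c),(v,d))$, and isolate via orthogonality of $L_{s_1}, L_{s_2}$ the unique cell $(a,b)$ with $(L_{s_1}(a,b), L_{s_2}(a,b)) = (c,d)$; by the idempotent property, $a = b$ precisely when $c = d$. If $c \neq d$, the inner entries at that cell are drawn from $(N_{s_1}, N_{s_2})$, so $(u,v)$ arises exactly once by orthogonality of the MOLS$(2p)$. If $c = d$, the inner entries are drawn from $(M_{s_1}, M_{s_2})$, so $(u,v)$ arises $0$ times when $u = v$, $2$ times when $v$ is the (unique) doubled partner of $u$ in the MNOLS superposition, and once otherwise. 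Aggregating, each symbol $(u,c)$ is missed exactly by its own copy, is doubled with one specific partner, and paired once with every other symbol, which is precisely the $k$-MNOLS$(2pn)$ property. The principal obstacle in the argument is justifying the MOLS-to-IMOLS reduction in the first step; I would defer to the standard citation rather than re-prove it here.
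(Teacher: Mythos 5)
The paper offers no proof of this statement (it is quoted from \cite{LvR}), so there is nothing internal to compare against; judged on its own terms, your block construction and the final pair-counting are the right idea, but the opening reduction contains a genuine gap. The claimed equivalence $k$-MOLS$(n)\Leftrightarrow k$-IMOLS$(n)$ is false. The standard fact is only $N_I(n)\geq N(n)-1$: given $k$ MOLS one takes the cells where the $k$-th square carries a fixed symbol as a common transversal of the other $k-1$, normalises it to the diagonal, and obtains $k-1$ idempotent MOLS --- one square is consumed in the process. The equivalence genuinely fails in small cases: there are $2$ MOLS$(3)$, but the idempotent Latin square of order $3$ is unique, so $2$-IMOLS$(3)$ do not exist. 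What your construction actually needs is a single set $D$ of $n$ cells on which \emph{all} $k$ squares $L_1,\dots,L_k$ agree and each take every symbol once (so that $\{(L_{s_1},L_{s_2})\restriction D\}=\{(c,c)\}$ for \emph{every} pair $s_1\neq s_2$); after renaming this is exactly $k$-IMOLS$(n)$, and it is not implied by $k$-MOLS$(n)$. With only $k-1$ idempotent squares available, your argument delivers $(k-1)$-MNOLS$(2pn)$, not $k$.

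The theorem is nevertheless true as stated, and the repair is small but requires a different alignment device: replace the diagonal by a block-\emph{row}. Rename the symbols of each $L_s$ independently so that its first row is $(0,1,\dots,n-1)$ (this preserves mutual orthogonality and costs nothing), and define
\[
T_s\bigl((x,a),(y,b)\bigr)=\begin{cases}(M_s(x,y),\,L_s(a,b)) & a=0,\\ (N_s(x,y),\,L_s(a,b)) & a\neq 0.\end{cases}
\]
Rows and columns are checked exactly as before. For any $s_1\neq s_2$, the cells carrying the MNOLS are those with $a=0$, and there $(L_{s_1}(0,b),L_{s_2}(0,b))=(b,b)$, so these blocks contribute precisely the diagonal outer pairs, while the remaining blocks contribute each off-diagonal outer pair once; your counting then goes through verbatim and yields the full set of $k$-MNOLS$(2pn)$. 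In short: the diagonal of a set of MOLS cannot in general be symbol-aligned, but a row always can, and that is the missing idea.
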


\begin{theorem}\label{vanrees2}\cite[Thm 4.5]{LvR}\label{LvR} Suppose there exists a ${\mathcal K}$-GDD of type $g_1^{a_1}\dots g_s^{a_s}$. Further suppose that for any group  size $g_i$ there exists a $s$-MNOLS$(g_i)$ and for any block  size $k\in {\mathcal  K}$ there exists a $s$-IMOLS$(k)$.  Then there are $s$-MNOLS$(\sum_{i=1}^s a_ig_i)$.
\end{theorem}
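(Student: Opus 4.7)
My plan is to adapt the classical Wilson/MOLS-from-GDD construction, replacing the ``within-group'' MOLS by MNOLS, and to verify that the near-orthogonality defect of the inner MNOLS passes directly to the composite squares while the cross-group pairs remain covered exactly once by the IMOLS on the blocks.

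Let the GDD have point set $\mathcal{V}$ of size $v=\sum a_i g_i$, partitioned into groups $\mathcal{G}=\{G_1,\ldots,G_m\}$ (with $a_i$ groups of size $g_i$), and block collection $\mathcal{B}$. For each group $G_i$ fix $s$-MNOLS $(M_i^1,\ldots,M_i^s)$ on the symbol set $G_i$; for each block $B\in\mathcal{B}$ fix $s$-IMOLS $(N_B^1,\ldots,N_B^s)$ on the symbol set $B$ (its entries being elements of $B\subseteq\mathcal{V}$, with $N_B^t(x,x)=x$). Then I would define the $s$ squares $L^1,\ldots,L^s$ of order $v$, indexed by $\mathcal{V}\times\mathcal{V}$, by
\[
L^t(x,y)=\begin{cases} M_i^t(x,y), & x,y\in G_i \text{ for some } i,\\ N_B^t(x,y), & x,y \text{ lie in distinct groups, with } B\in\mathcal{B} \text{ the unique block containing } \{x,y\}.\end{cases}
\]
The GDD axioms guarantee that exactly one block contains any cross-group pair, so the definition is unambiguous; note also that any two distinct points of a block lie in different groups, so the off-diagonal cells of each block are precisely the cross-group cells they parametrise.

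Next I would verify each $L^t$ is Latin. Fix a row index $x\in G_i$. The cells $(x,y)$ with $y\in G_i$ produce the set $G_i$ (since $M_i^t$ is Latin). For $z\in\mathcal{V}\setminus G_i$, I use the unique block $B$ through $\{x,z\}$ and note that the Latin + idempotent property of $N_B^t$ on row $x$ gives each element of $B\setminus\{x\}$ exactly once among the off-diagonal entries; summing over all blocks through $x$ and using that every $z\notin G_i$ appears in exactly one such block, the cross-group portion of row $x$ produces $\mathcal{V}\setminus G_i$ without repetition. Columns are handled symmetrically.

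For the pseudo-orthogonality, I would compute, for fixed $t\neq t'$, the multiset $O_{t,t'}=\{(L^t(x,y),L^{t'}(x,y)):x,y\in\mathcal{V}\}$ by summing contributions from group cells and block cells. On each block $B$, orthogonality of $N_B^t,N_B^{t'}$ covers $B\times B$ once, and the idempotent diagonal accounts exactly for the pairs $\{(b,b):b\in B\}$; so the off-diagonal cells of $B$ contribute each cross-group ordered pair from $B\times B$ exactly once. Summing over $\mathcal{B}$, every cross-group pair of $\mathcal{V}\times\mathcal{V}$ arises exactly once. For each group $G_i$, the MNOLS property contributes: $(a,a)$ missing for every $a\in G_i$, each $(a,b)$ with $a\neq b\in G_i$ once, and exactly one pair $(a,b_a)$ doubled per $a\in G_i$. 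Combining, for every $a\in\mathcal{V}$ the diagonal $(a,a)$ is absent and exactly one off-diagonal pair $(a,b_a)$ is doubled, while every other pair occurs once. This is the MNOLS condition of order $v$.

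The main obstacle is not any one calculation but the careful bookkeeping of which cells belong to which block/group, together with the consistent use of idempotency to prevent the diagonal cells of the IMOLS on each block from polluting the group cells; once one accepts the ``no two block points lie in the same group'' feature of a GDD and the fact that idempotent diagonal pairs exactly account for the single omitted symbol at the interface between block and group, the argument reduces to a clean counting identity.
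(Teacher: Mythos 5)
The paper does not prove this theorem; it is quoted from Li and van Rees \cite{LvR}, whose proof is exactly the Wilson-style ``fill the groups with MNOLS, fill the blocks with IMOLS'' construction you give. Your argument is correct: the GDD axioms ensure the blocks through a point $x$ cover $\mathcal{V}\setminus G_i$ exactly once so each $L^t$ is Latin, and your accounting of the superposition (idempotent diagonals of the block squares exactly cancelling the diagonal of $B\times B$, cross-group pairs covered once, and the near-orthogonality defect confined to the groups) establishes the MNOLS property of order $\sum_i a_i g_i$.
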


\begin{lemma}
There exists  $3$-NMOLS$(2p)$ for $2p =$ {\rm 76}, $92^*$, {\rm 96, 108}, $116^*$, $124^*,$
{\rm 128, 144}, $148^*$, {\rm 164},
$172^*$, {\rm 188, 192, 212, 236, 244, 256, 268, 284, 288, 292, 316, 332, 348}  and {\rm 356}.
\end{lemma}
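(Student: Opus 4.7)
The plan is to dispatch every value in the list by applying one of the two Li--van Rees theorems (Theorem \ref{vanrees1}, the multiplicative construction, and Theorem \ref{vanrees2}, the GDD construction) to ingredients assembled from the preceding lemmas of this section together with standard MOLS and TD data from \cite{colbourn}. For each target $2p$ I will either exhibit a factorisation $2p = (2q)\cdot n$ with a $3$-MNOLS$(2q)$ already in hand and both $3$-MOLS$(2q)$ and $3$-MOLS$(n)$ existing, or I will produce a $\mathcal{K}$-GDD of type $g_1^{a_1}\cdots g_s^{a_s}$ whose group sizes all admit $3$-MNOLS and whose block sizes all admit $3$-IMOLS.

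First I would handle the values whose factorisations are of the ``prime-power times known MNOLS order'' type directly via Theorem \ref{vanrees1}. Concretely, $128 = 16\cdot 8$, $144 = 16\cdot 9$, $192 = 16\cdot 12$, $256 = 16\cdot 16$, and $288 = 32\cdot 9$; since $3$-MOLS$(n)$ exists for every prime power $n\geq 4$ and $3$-MOLS$(12)$, $3$-MOLS$(16)$ are standard, and since $3$-MNOLS$(16), 3$-MNOLS$(32)$ are already confirmed earlier in this section, Theorem \ref{vanrees1} yields these five orders at once. The multiples of $6$ that remain, namely $96=6\cdot 16$ and $108=6\cdot 18$, are \emph{not} covered by Corollary \ref{ls:sixes} (since the co-factors contain $2$ and $3$), so I would instead write $96 = 12\cdot 8$ and $108 = 12\cdot 9$ and again invoke Theorem \ref{vanrees1}, using the $3$-MNOLS$(12)$ from Lemma \ref{spec:60}.

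The remaining entries are almost all of the form $4p$ with $p$ an odd prime: $76, 92, 116, 124, 148, 164, 172, 188, 212, 236, 244, 268, 284, 292, 316, 332, 348, 356$. For these the direct multiplicative route is blocked, because $3$-MOLS$(2)$ does not exist. Here the workhorse is Theorem \ref{vanrees2}. I would start from a transversal design TD$(6,m)$, equivalently $4$-MOLS$(m)$, for a suitably chosen $m$ (typically $m\in\{19,23,29,31,\dots\}$, all prime and $\geq 5$, for which TD$(6,m)$ is classical), then truncate one of the six groups to a block-respecting subset of size $t$ to obtain a $\{5,6\}$-GDD of type $m^5\,t^1$ on $5m+t$ points. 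Choosing $m$ and $t$ so that $4p = 5m+t$ (for example $76 = 5\cdot 14 + 6$ after a second weighting step, $92 = 5\cdot 16 + 12$, and so on), and then weighting each point by a constant $w$ drawn from the $3$-MNOLS table, one converts this into a GDD whose group sizes all lie in the set $\{6,8,10,12,14,16,\dots\}$ of already-confirmed $3$-MNOLS orders. The hypotheses of Theorem \ref{vanrees2} then apply because $3$-IMOLS$(5)$ and $3$-IMOLS$(6)$ both exist (see \cite{colbourn}).

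The main obstacle is the bookkeeping for this last family: for each $4p$ one must certify that some choice of $(m,t,w)$ produces a GDD whose group spectrum is contained in the set of known $3$-MNOLS orders, and that the relevant TD$(6,m)$ or TD$(7,m)$ actually exists in the small cases. The delicate values are the smallest ones, $76$ and $92$, where the range of usable $m$ is narrowest; for these I expect to need an ad hoc $\{4,5\}$-GDD rather than the generic truncated-TD recipe. Once each target has been assigned a valid decomposition, the proof reduces to displaying a single table listing, for every $2p$ in the statement, the triple (construction, ingredient MNOLS/MOLS or GDD type, reference), with no further calculation required.
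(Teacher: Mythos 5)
Your handling of $2p=96,108,128,144,192,256,288$ via Theorem \ref{vanrees1} is sound (the ingredients $3$-MNOLS$(12)$, $3$-MNOLS$(16)$, $3$-MNOLS$(32)$ and the needed $3$-MOLS all exist, though $3$-MNOLS$(12)$ comes from the small-order lemma citing Li and van Rees, not from Lemma \ref{spec:60}), and it is a legitimately different route from the paper, which treats \emph{every} value in the list uniformly by feeding a $5$-GDD into Theorem \ref{vanrees2}. The problem is the remaining eighteen values of the form $4p$, which are the bulk of the lemma. Your recipe there has a fatal ingredient error: truncating one group of a TD$(6,m)$ to $t$ points with $0<t<m$ produces a $\{5,6\}$-GDD, and Theorem \ref{vanrees2} then demands a $3$-IMOLS$(6)$ for the blocks of size $6$. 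No such object exists --- there is not even a pair of orthogonal Latin squares of order $6$ --- so every application of your truncated-TD construction with $t>0$ collapses. The ``second weighting step'' you invoke is also not part of Theorem \ref{vanrees2} as stated, so it cannot be used to repair this.

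The paper sidesteps the issue entirely by using honest $5$-GDDs (all blocks of size exactly $5$) of type $g^5u^1$ with $g$ and $u$ even --- e.g.\ $12^5\,16^1$ for $76$ and $20^4\,12^1$ for $92$ --- whose existence is guaranteed by \cite[Thm 4.17, p 258]{colbourn}; then only $3$-IMOLS$(5)$ is required, and the group sizes $8,12,16,\dots,68$ are all confirmed $3$-MNOLS orders. Beyond the IMOLS$(6)$ error, you also leave the essential bookkeeping undone: you never actually exhibit a valid $(m,t)$ (or GDD type) for any of the eighteen values, and you explicitly flag $76$ and $92$ as unresolved. Since certifying a workable GDD for each order is precisely the content of the lemma, the proposal as written does not establish the statement for the majority of the listed values.
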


\begin{proof}
%44(8^5 4^1), 48(8^5 8^1),
The $3$-MNOLS$(2p)$, $2p= 76(12^5 16^1), 92(20^4 12), 96(16^5 16^1), 108(20^5 8^1),$ \\ $ 116(20^5, 16),
124(20^5 24^1), 128(24^5 8^1), 144(24^5 24^1), 148(24^5 28), 164(28^5 24^1),$\\ $ 172(32^5 12), 188(32^5 28^1),
192(32^5 36^1), 212(36^5 32^1), 236(40^5 36^1), 244(40^5 44^1),$ \\ $256(44^5 36^1), 268(44^5 48^1), 284(48^5 44^1),
288(48^5 48^1), 292(48^5 52^1), 316(52^5 56^1),$ \\$ 332(56^5 52^1), 348(56^5 68^1)$ and $356(60^5 56^1)$ can be constructed applying $5$-GDD, that exist by \cite[Thm 4.17, p 258]{colbourn}, in Theorem \ref{vanrees2}. Here the bracketed information gives the type of the GDD.
\end{proof}

%Finally for completeness we give the GDD's used for all remaining cases.

%\begin{lemma}There exists  $3$-NMOLS$(2p)$ for all remaining $2p\leq 356$ except possibly $2p=146$.
%\end{lemma}

\begin{lemma}There exists  $3$-NMOLS$(2p)$ for $2p =$ {\rm 64, 68, 72}, $74^*$, {\rm 122, 162, 194, 218}
and {\rm 314}.
\end{lemma}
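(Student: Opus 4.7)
The plan is to settle each order in the list by applying the two composition theorems just stated, namely the product construction of Theorem \ref{vanrees1} and the group-divisible-design construction of Theorem \ref{vanrees2}, to values of $2p$ for which $3$-MNOLS have already been established earlier in Section \ref{sc:spec}.

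For the orders $64$, $72$ and $162$ a clean multiplicative factorization is available with all three ingredients of Theorem \ref{vanrees1} in hand. Taking $64 = 8\cdot 8$, $72 = 8\cdot 9$ and $162 = 18 \cdot 9$ and noting that $3$-MNOLS$(8)$ and $3$-MNOLS$(18)$ appear in the existence list opening Section \ref{sc:spec}, while $3$-MOLS$(8)$, $3$-MOLS$(9)$ and $3$-MOLS$(18)$ are classical, Theorem \ref{vanrees1} produces the required squares directly.

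For the remaining orders $68, 74, 122, 194, 218$ and $314$, each of which is twice a prime or otherwise fails to factor usefully against the currently available MNOLS stock, I would instead invoke Theorem \ref{vanrees2}. The strategy is to exhibit, for each such $2p$, a $\{5\}$-GDD of type $a^5 b^1$ with $5a + b = 2p$, chosen so that both group sizes $a$ and $b$ are orders for which $3$-MNOLS has already been built, and with $3$-IMOLS$(5)$ supplying the blocks. Natural candidate types include $12^5 8^1$ for $2p = 68$, $12^5 14^1$ for $2p = 74$, $22^5 12^1$ for $2p = 122$, and analogous $a^5 b^1$ types for $194, 218, 314$ whose existence follows from the $5$-GDD tables in \cite[Thm 4.17]{colbourn}. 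This is precisely the same template used in the proof of the preceding lemma.

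The main obstacle is the case-by-case combinatorial matching: for each awkward order one must simultaneously verify (i) that a $5$-GDD of the chosen type is listed in \cite[Thm 4.17]{colbourn}, (ii) that every group size appearing has already been handled in this paper (so for example $14, 22, 38, 58$ must be drawn from the $6\mu + 4$ class produced in Section \ref{constructions}), and (iii) that $3$-IMOLS$(5)$ is available for the blocks. Each individual check is routine, but assembling a compatible triple for each of the six awkward orders is the only nontrivial content of the proof.
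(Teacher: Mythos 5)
Your treatment of $64$, $72$ and $162$ via Theorem \ref{vanrees1} (using $64=8\cdot 8$, $72=8\cdot 9$, $162=18\cdot 9$, with $3$-MNOLS$(8)$, $3$-MNOLS$(18)$ from the earlier existence list and classical $3$-MOLS of orders $8$, $9$, $18$) is sound, and it is a genuinely different route from the paper, which instead feeds an $8$-GDD of type $8^8$, a $9$-GDD of type $8^9$ and an $\{11,12,13\}$-GDD of type $12^{12}10^1 8^1$ into Theorem \ref{vanrees2} for those three orders. For these values your argument is arguably the more economical one.

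The plan for the remaining orders, however, has a genuine gap. In a $\{5\}$-GDD of type $a^5b^1$, each block through a point $x$ covers exactly $4$ of the cross-pairs at $x$, so $4$ must divide the number of points outside the group of $x$; this forces $4\mid b$ (from a point in a group of size $a$, where that number is $4a+b$) and $4\mid a$ (from a point in the group of size $b$, where it is $5a$), hence $4\mid 5a+b=2p$. Since $74$, $122$, $194$, $218$ and $314$ are all $\equiv 2\pmod 4$, no $\{5\}$-GDD of type $a^5b^1$ on that many points exists at all; in particular your explicit candidates $12^5 14^1$ and $22^5 12^1$ already violate the divisibility conditions and cannot appear in the tables of \cite{colbourn}. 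This obstruction is exactly why the paper abandons block size $5$ for these orders and uses GDDs with block sizes in $\{7,8,9\}$, $\{7,8\}$, $\{11,12,13\}$, $\{13,14\}$ and $\{8,9,10,11\}$ obtained from truncated finite-field transversal designs, whose divisibility constraints admit group sizes such as $6$, $10$ and $14$ that are not multiples of $4$. Of your ``awkward'' orders only $2p=68$ could in principle be handled by an $a^5b^1$ template (e.g.\ $12^5 8^1$); to close the gap for the five orders $\equiv 2\pmod 4$ you must pass to larger block sizes, essentially as the paper does.
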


\begin{proof}
$3$-NMOLS$(2p)$ for $2p =$ 64, 68, 72, $74^*$, 122, 162, 194 and 218 can be constructed by applying $8$-GDD$(8^8)$, $\{7, 8, 9\}$-GDD$(8^7 6^2)$, $9$-GDD$(8^9)$, $\{7, 8, 9\}$-GDD$(8^7 6^3)$, $\{7, 8\}$-GDD$(16^7 10^1)$, $\{11, 12, 13\}$-GDD$(12^{12} 10^1 8^1)$,\\ $\{11, 12, 13\}$-GDD$(16^{11} 10^1 8^1)$,
%$\{11, 12, 13, 14\}$-GDD$(16^{11} 14^3)$ and \\
$\{13, 14\}$-GDD$(16^{13} 10^1)$ and
$\{8, 9, 10,$ $11\}$-\\GDD$(32^8 14^2 10^1)$, that exist by finite field constructions, in Theorem \ref{vanrees2}, respectively.
\end{proof}

All the results of this section combine to the following theorem.

\begin{theorem} There exists a set of $3$-MNOLS$(2p)$ for each positive integers $p\geq 3$, except possibly $p=73$.
\end{theorem}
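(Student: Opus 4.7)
The plan is to piece together all of the existence results accumulated so far in the paper, and to verify that the union of these results covers every even integer $2p\ge 6$ with the sole possible exception of $2p=146$. First, the Li--van Rees result (Theorem 3.1 of the excerpt) disposes of every order $2p\ge 358$. Second, the classical values $2p\in\{6,8,\ldots,22\}$ together with the infinite arithmetic progression $2p\equiv 14,22,38,46\pmod{48}$ covered in Section~\ref{sc:spec} account for a large infinite family in the range $24\le 2p\le 356$. So the remaining task is to account for the finite residual set, and in particular the $29$ orders listed in Lemma~\ref{rem-spec}, plus any other even value below $358$ not already handled.

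The core step is therefore a finite bookkeeping exercise: walk down the list of Lemma~\ref{rem-spec} and attach to each order a pointer to the result in this section that now establishes it. The grouping will be as follows. Orders of the form $6n$ with $n$ a product of primes $\ge 5$ (namely $30,42,66,102,114,138,174$) are handled by Corollary~\ref{ls:sixes}, via Theorem~\ref{hdm-6}. Orders of the form $2n$ with $n\ge 5$ prime (namely $50,170$) are handled by Corollary~\ref{c:prod} together with Lemma~\ref{hdm-2prime}. Orders of the form $6\mu+4$ in the specified family (namely $34,58,82,106,130,178$) are picked up by Theorem~\ref{thm-6mu+4} from Section~\ref{constructions}. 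The small order $26$ is picked up by Corollary~\ref{cor-2m}. The computer-search lemma clears $24,28,36,52,54$. Finally, the orders which resist the direct difference-array approach, namely $74,92,116,122,124,148,172$, are obtained via the $s$-MNOLS product/GDD machinery of Theorems~\ref{vanrees1} and~\ref{vanrees2}, using the GDDs of types such as $\{7,8,9\}$-GDD$(8^7 6^3)$ for $2p=74$ and $5$-GDD type $g_1^5 g_2^1$ breakouts for $92,116,124,148,172$. This leaves $2p=146$ as the only entry in Lemma~\ref{rem-spec} not addressed, giving the stated conclusion.

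The main (minor) obstacle is logistical rather than mathematical: I must verify that every entry of Lemma~\ref{rem-spec} other than $146$ genuinely appears, with correct starred annotation, among the orders certified in some lemma of this section, and that no even order $2p$ in $\{24,\ldots,356\}$ has been inadvertently missed. Since every preceding lemma in Section~\ref{sc:spec} already lists its output values explicitly, this reduces to comparing two finite lists. Once the tables match (with the single gap at $2p=146$), the theorem follows immediately by concatenating the statements of the preceding lemmas and of Theorem~3.1.
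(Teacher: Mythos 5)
Your proposal is correct and matches the paper's proof, which is simply the statement that ``all the results of this section combine'' — i.e.\ the same finite bookkeeping over Lemma~\ref{rem-spec} plus the Li--van Rees bound for $2p\ge 358$, with each residual order assigned to the same lemma you cite (Corollary~\ref{ls:sixes}, Corollary~\ref{cor-2m}, Theorem~\ref{thm-6mu+4}, the computer-search lemma, and the GDD lemmas). Your order-by-order assignments agree with the paper's lists, and $2p=146$ is indeed the unique unresolved case.
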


\begin{conjecture} There exists DCA$(4, 2p+1;2p)$ satisfying P1 and P2 of all positive integers $p\geq 3$.
\end{conjecture}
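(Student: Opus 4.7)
The plan is to close the 35 remaining gaps in which a cyclic DCA$(4,2p+1;2p)$ satisfying P1 and P2 is not yet known, namely $2p\in\{64,68,72,74,76,92,96,108,116,122,124,128,144,146,148,162,164,172,188,192,194,212,218,236,244,256,268,284,288,292,314,316,332,348,356\}$. Observe that 34 of these orders already admit $3$-MNOLS$(2p)$ via the GDD recursions of Theorems~\ref{vanrees1} and~\ref{vanrees2}, while $2p=146$ remains unresolved even at the MNOLS level. The task is therefore twofold: promote the GDD-based MNOLS to cyclic DCAs, and settle the isolated order $2p=146$ directly.

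First I would revisit Theorems~\ref{vanrees1} and~\ref{vanrees2} and establish a cyclic refinement: if the GDD of type $g_1^{a_1}\cdots g_s^{a_s}$ carries a point-regular cyclic automorphism (as happens for transversal designs and many of the $5$-GDDs arising from affine planes), and if each ingredient $k$-IMOLS and $k$-MNOLS is itself cyclic, then a cyclic DCA$(4,2p+1;2p)$ satisfying P1 and P2 can be assembled by rewriting the GDD construction in terms of holey difference matrices and then appealing to Lemma~\ref{l:insert} and Corollary~\ref{c:prod}. For each of the 34 orders listed above I would verify that the particular GDD used in Section~\ref{sc:spec} can be replaced by a cyclic version; when not, I would look for an alternative cyclic HDM$(4,2p;h)$ ingredient using the primes dividing $2p/h$, possibly combined with the direct small-order constructions of Section~\ref{constructions}.

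For the pivotal case $2p=146=2\cdot 73$, the difficulty is that $73$ is prime and does not fit directly into any of the HDM ingredient theorems of~\cite{Yin2}. I would attack this along three complementary lines: (i)~construct a cyclic HDM$(4,146;h)$ for some admissible $h\in\{2,10,14\}$ and invoke Lemma~\ref{l:insert} with the known cyclic DCA$(4,h+1;h)$; (ii)~run a targeted computer search for a direct cyclic DCA$(4,147;146)$ using the tripartite template of Section~\ref{constructions}, with first column the identity, second column $[1,3,\ldots,145,2,4,\ldots,144]$, and third column variable, and with symmetry-breaking achieved by fixing the positions of the two mandatory zeros dictated by P1; and (iii)~attempt an explicit algebraic construction exploiting the multiplicative structure of $\mathrm{GF}(73)$ and a quadratic-residue partition of $\mathbb{Z}_{146}$, in the spirit of Theorem~\ref{thm-6mu+4}.

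The main obstacle will be $2p=146$. The computational search in (ii), even after symmetry reduction, is of size roughly $146!/\lvert\mathrm{Aut}\rvert$ and requires aggressive pruning based on partial difference tallies; the algebraic approach in (iii) depends on the existence of a difference triple with controlled overlap in $\mathrm{GF}(73)$, which is not guaranteed by standard Weil-type character sum bounds at this order; and the HDM approach in (i) runs into the obstruction that no cyclic HDM$(4,146;h)$ is presently known for any admissible hole size. If all three channels fail, the fallback position is to prove that a cyclic DCA$(4,147;146)$ satisfying P1 and P2 would violate a congruence obstruction at the prime $73$, thereby refuting the conjecture in its present form and motivating a sharpened version with a finite list of sporadic exceptions.
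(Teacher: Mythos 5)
The statement you are addressing is a \emph{conjecture}: the paper offers no proof of it, and indeed explicitly records that for the orders $64, 68, \ldots, 356$ in your list the authors ``were unable to construct cyclic difference covering arrays,'' and that for $2p=146$ they ``believe'' a suitable DCA exists ``but have been unable to verify it.'' Your submission is not a proof but a research programme, and as such it contains no completed argument at any point. The first step --- a ``cyclic refinement'' of Theorems \ref{vanrees1} and \ref{vanrees2} showing that the GDD recursion can be rewritten through holey difference matrices so as to output a DCA$(4,2p+1;2p)$ satisfying P1 and P2 --- is precisely the step the paper could not carry out (which is why those 34 orders yield only $3$-MNOLS$(2p)$ and not difference covering arrays), and you give no argument that the required point-regular cyclic automorphisms or cyclic ingredient designs exist for any of the specific GDDs used in Section \ref{sc:spec}. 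Asserting that one ``would verify'' this for each order is not a verification.

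The case $2p=146$ is handled even more loosely: all three of your channels are described together with the reasons each may fail, and your stated fallback is that the conjecture might be \emph{false}, which is the opposite of a proof. A further structural point: the conjecture as stated asks only for a DCA$(4,2p+1;2p)$ satisfying P1 and P2, not a cyclic one, so you have also silently strengthened the target (the paper notes that non-cyclic constructions succeed for some of the outstanding orders). In short, there is no gap to compare against the paper's proof because neither you nor the paper proves the statement; what you have written is a plausible plan of attack whose every essential step remains open.
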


\section{Construction of difference covering arrays DCA$(4, 2m+1;2m)$}\label{constructions}

This section is devoted to giving new constructions for families of cyclic DCA$(4,$ $2m+1;2m)$ when $m = 2k + 1$, $ m = 8k + 4$ and $ m = 3k + 2$, respectively. In each of these cases the difference covering arrays satisfy P1 and P2 and so they can be used to construct MNOLS$(2m)$.

When using  a cyclic DCA$(4, 2m+1;2m)$ to construct nearly orthogonal Latin squares we strip off the last row and last column of zeros. Thus to  reduce the complexity of the notation and to avoid confusion, we will assume that we are constructing a $2m\times 3$ array  $Q=[q(i,j)]$ that satisfies:
\begin{itemize}
\item each column is a permutation of $Z_{2m}$ and
\item $\Delta_{j,j^\prime}=\{q(i,j)-q(i,j^\prime)|\mid 0\leq i\leq 2m-1\}=\{1, 2,\dots, m,m,\dots, 2m-1\}$, with repetition retained.
\end{itemize}
Also  we will use the following notation:  $q(a,0)=a$ (or $q(\alpha,0)=a(\alpha)$),  $q(a, 1)=b(a)$ (or $q(\alpha, 1)=b(\alpha)$),   and
 $q(a, 2)=c(a)$ (or $q(\alpha, 2)=c(\alpha)$).

The following lemmas document some well known results, stated without proof, which will be used extensively in the proof of subsequent results.

\begin{lemma}\label{basic0}
For all integers $x,y,z$, ${\rm gcd}(x+yz,z)=gcd(x,z).$
\end{lemma}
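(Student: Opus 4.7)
The plan is to establish the equality of the two gcds by the standard two-way divisibility argument, showing that each side divides the other.

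First I would let $d_1=\gcd(x+yz,z)$ and $d_2=\gcd(x,z)$, and observe that both are positive integers (treating the degenerate case $z=0$ separately, where the identity reduces to $|x|=|x|$ trivially). The forward direction is immediate: since $d_2 \mid x$ and $d_2 \mid z$, we have $d_2 \mid yz$, and hence $d_2 \mid (x+yz)$. Combined with $d_2 \mid z$, the definition of gcd gives $d_2 \mid d_1$.

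For the reverse direction I would argue symmetrically: from $d_1 \mid (x+yz)$ and $d_1 \mid z$ we get $d_1 \mid yz$, so $d_1 \mid \bigl((x+yz)-yz\bigr)=x$, and together with $d_1 \mid z$ this yields $d_1 \mid d_2$. Two positive integers dividing each other must be equal, so $d_1=d_2$, as required.

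There is no real obstacle here; the result is a textbook consequence of the elementary fact that if an integer divides two quantities then it divides any integer combination of them. The only point requiring mild care is the boundary case $z=0$, where one adopts the standard convention $\gcd(x,0)=|x|$ so that both sides equal $|x|$. Since the paper only invokes this identity with $z$ playing the role of a nonzero modulus in later arguments, one could equally well state and prove the lemma under the assumption $z\neq 0$ without loss of generality.
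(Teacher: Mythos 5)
Your proof is correct: the mutual-divisibility argument, together with the careful handling of the degenerate case $z=0$, is exactly the standard justification for this identity. The paper itself states this lemma without proof as a well-known fact, so your write-up simply supplies the expected textbook argument.
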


\begin{lemma}\label{basic}
Let $g$ and $p$ be positive integers and  $h$ a non-negative integer. Working modulo $2p$, if $gcd(g, 2p)=1$ then
\begin{align*}
\{gx+h\mid 0\leq x\leq 2p-1\}&={\mathbb Z}_{2p},
\end{align*}
or if $gcd(g, 2p)=r$ and $h\equiv s\mod r$ then
\begin{align*}
\{gx+h\mid 0\leq x\leq 2p/r-1\}
&=\{rx+s\mid 0\leq x\leq 2p/r-1\}.
\end{align*}
\end{lemma}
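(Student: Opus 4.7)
The plan is to handle the two cases separately, both following from the standard observation that the image of the arithmetic progression $gx+h$ in $\mathbb{Z}_{2p}$ is controlled by $\gcd(g,2p)$.

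For the first claim, when $\gcd(g,2p)=1$ the element $g$ is a unit in $\mathbb{Z}_{2p}$, so $x\mapsto gx$ is a bijection on $\mathbb{Z}_{2p}$; composing with the translation by $h$ preserves bijectivity. Hence as $x$ runs over the full set of representatives $\{0,1,\dots,2p-1\}$, the values $gx+h$ exhaust $\mathbb{Z}_{2p}$ without repetition, giving the claimed equality.

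For the second claim, I would write $g=rg'$ with $r=\gcd(g,2p)$, so that $\gcd(g',2p/r)=1$ (immediate from the definition of the gcd, and also a direct consequence of Lemma \ref{basic0}). Every value $gx+h$ satisfies $gx+h\equiv s\pmod{r}$, so the left-hand set is contained in the coset $s+r\mathbb{Z}_{2p}=\{rx+s\mid 0\leq x\leq 2p/r-1\}$, which has exactly $2p/r$ elements. To upgrade this containment to equality, it suffices to show the $2p/r$ values on the left are pairwise distinct mod $2p$. If $gx_1+h\equiv gx_2+h\pmod{2p}$, then $rg'(x_1-x_2)\equiv 0\pmod{2p}$, hence $g'(x_1-x_2)\equiv 0\pmod{2p/r}$, and the coprimality of $g'$ with $2p/r$ forces $x_1\equiv x_2\pmod{2p/r}$; in the range $0\leq x_1,x_2\leq 2p/r-1$ this means $x_1=x_2$.

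There is no real obstacle in this proof; the statement is a folklore fact about arithmetic progressions in a finite cyclic group, and the paper evidently states it without proof for precisely this reason. The only thing worth being careful about is presenting the conclusion in the exact normal form $\{rx+s\mid 0\leq x\leq 2p/r-1\}$ used in later sections, since the ensuing constructions of cyclic DCAs and MNOLS will repeatedly substitute into this shape to read off the full difference multiset.
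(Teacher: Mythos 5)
Your proof is correct: the first case is the standard unit-times-bijection argument, and in the second case you correctly combine the containment of the values in the coset $s+r\mathbb{Z}_{2p}$ with a counting/distinctness argument using $\gcd(g/r,\,2p/r)=1$. The paper explicitly states this lemma without proof as a well-known fact, so there is no authorial argument to compare against; yours is the expected folklore derivation and fills the gap adequately.
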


 \subsection{Construction for general families DCA$(4, 2m+1;2m)$ for some odd $m$}

In this subsection we give a general construction for a difference covering array DCA$(4, 2m+1;2m)$, for $m$ odd. The proof that such a difference covering array exists uses the results presented in the following lemma. Note that in this section unless otherwise stated all arithmetic is modulo $2m$.
In particular, for $i \not\equiv 2\; \rm{mod}\; 3$ a non-negative integer, and $k = 2i^2 + 7i + 6$, we present an infinite family of DCA$(4, 2m+1;2m)$ for $m = 2k + 1$.

\begin{lemma}\label{lem:gcd}
Let $f$ and $m$ be integers such that
$gcd(f, 2m)=2$,
 $gcd(f+2, 2m)=2$, and
$f^2+f+1\equiv m\mod 2m$.
Then
\begin{align}
{\rm gcd}(f,m)&=1,\label{gcd f}\\
{\rm gcd}(f+1,m)&=1,\label{gcd f+1}\\
{\rm gcd}(f-1,m)&=1,\label{gcd f-1}\\
{\rm gcd}(2f+1,m)&=1\label{gcd 2f+1}\\
mf&\equiv 0\mod 2m.\label{mf=0}
\end{align}
\end{lemma}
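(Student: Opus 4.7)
The plan is to first pin down the parity of $m$, which will make three of the five conclusions essentially immediate, and then handle the two remaining gcd claims by a short prime-by-prime argument in which the only genuinely delicate case is the prime $3$.

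First I would observe that the hypothesis $\gcd(f,2m)=2$ forces $f$ to be even; hence $f^2+f+1$ is odd, and since $f^2+f+1\equiv m\pmod{2m}$ implies in particular $f^2+f+1\equiv m\pmod 2$, we conclude $m$ is odd. Because $m$ is odd we have $\gcd(2,m)=1$, so $\gcd(f,2m)=\gcd(f,2)\gcd(f,m)=2\gcd(f,m)$, and the hypothesis $\gcd(f,2m)=2$ gives \eqref{gcd f} at once. The congruence $mf\equiv 0\pmod{2m}$ in \eqref{mf=0} follows immediately from $f$ even, since then $f=2a$ and $mf=2am$.

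Next, the congruence $f^2+f+1\equiv m\pmod{2m}$ reduces modulo $m$ to $f^2+f+1\equiv 0\pmod m$. For \eqref{gcd f+1}, let $p$ be a prime dividing $\gcd(f+1,m)$; then $f\equiv-1\pmod p$, so $f^2+f+1\equiv 1\pmod p$, forcing $p\mid 1$, a contradiction. For \eqref{gcd f-1} and \eqref{gcd 2f+1}, the same kind of substitution gives $f^2+f+1\equiv 3\pmod p$ (when $p\mid f-1$) and, after multiplying $f^2+f+1$ by $4$ to obtain $(2f+1)^2+3$, also $3\equiv 0\pmod p$ (when $p\mid 2f+1$). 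In both cases the only possible prime is $p=3$.

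The heart of the proof, and the step I expect to be the main obstacle (although it is still short), is ruling out $p=3$ using the second hypothesis $\gcd(f+2,2m)=2$, which has not been used so far. The plan is: if $3\mid m$, then $3\mid 2m$, so both $\gcd(f,2m)=2$ and $\gcd(f+2,2m)=2$ force $3\nmid f$ and $3\nmid f+2$; modulo $3$ this leaves only $f\equiv 2\pmod 3$. But $p=3$ in \eqref{gcd f-1} would require $f\equiv 1\pmod 3$, and $p=3$ in \eqref{gcd 2f+1} would require $2f\equiv -1\pmod 3$, i.e.\ $f\equiv 1\pmod 3$, each contradicting $f\equiv 2\pmod 3$. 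This finishes \eqref{gcd f-1} and \eqref{gcd 2f+1} and thereby completes the lemma.
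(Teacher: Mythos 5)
Your proof is correct. The first half (parity of $m$, \eqref{gcd f}, \eqref{mf=0}, and essentially \eqref{gcd f+1}) matches the paper's reasoning: both arguments hinge on $f$ even forcing $m$ odd, and on reducing $f^2+f+1\equiv m\pmod{2m}$ to $f^2+f+1\equiv 0\pmod m$. Where you genuinely diverge is in \eqref{gcd f-1} and \eqref{gcd 2f+1}. The paper proceeds by algebraic identities modulo $m$: it writes $f+1\equiv -f^2$, $f-1\equiv f(f+2)$, and $2f+1\equiv -f(f-1)\pmod m$, so that each new quantity is exhibited as a product of factors already known to be coprime to $m$ (using $\gcd(f+2,m)=1$, which comes straight from the hypothesis $\gcd(f+2,2m)=2$). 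You instead run a prime-by-prime analysis: a common prime divisor $p$ of $m$ and $f-1$ (or $2f+1$, via $4(f^2+f+1)=(2f+1)^2+3$) must divide $3$, and you then eliminate $p=3$ by observing that if $3\mid m$ the two gcd hypotheses force $f\equiv 2\pmod 3$, incompatible with $3\mid f-1$ or $3\mid 2f+1$. Your route is more mechanical and localizes all the difficulty to the single prime $3$, at the cost of a case analysis; the paper's route is shorter once the identities are found, but those identities are the nonobvious step. Both uses of the hypothesis $\gcd(f+2,2m)=2$ are legitimate, just deployed differently. One small point worth making explicit in your \eqref{gcd 2f+1} argument: multiplying by $4$ is harmless only because $m$ is odd, so any prime $p\mid\gcd(2f+1,m)$ is odd — you have established this, but it deserves a mention where the factor $4$ is introduced.
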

\begin{proof}

Eq \ref{gcd f}: \quad Note that since $f$ is even, $f^2+f+1$ is odd implying $m$ is odd and hence ${\rm gcd}(f,m)=1$.

Eq \ref{gcd f+1}: \quad Since $f+1  \equiv -f^2\mod m$ and ${\rm gcd}(f,m)=1$ we have
$1={\rm gcd}(f,m)={\rm gcd}(f^2,m)={\rm gcd}(f+1,m).$

Eq \ref{gcd f-1}: \quad Since
$
f-1 = (-f^2-f-1)+f^2+2f  \equiv f(f+2) \mod m$, ${\rm gcd}(f,m)=1$ and ${\rm gcd}(f+2,m)=1$, we have
$1= {\rm gcd}(f(f+2),m)={\rm gcd}(f-1,m).$

Eq \ref{gcd 2f+1}: \quad Since $2f+1 \equiv -f(f-1)\mod m$, ${\rm gcd}(f,m)=1$ and ${\rm gcd}(f-1,m)=1$,
$1={\rm gcd}(f(f-1),m)={\rm gcd}(2f+1,m).$

Eq \ref{mf=0}:  This follows from the fact that $f$ is even.
\end{proof}

 For a suitable choice of $f$, we divide the domain of $a$ into the subintervals $[0,m+f]$, $[m+f+1,m-1]$, $[m,m-f-1]$ and $[m-f, 2m-1]$ where all endpoints  are included.

\begin{example}
To aid understanding we begin with an example where
$m=13$ and $f=16$ and give the transpose of the difference covering array DCA$(4, 27;26)$. The key to understanding the proof is to recognise that within the
subintervals $I_1 =[0,\dots, 3], I_2 = [4, \dots, 12], I_3=[13, \dots, 22]$ and $I_4 = [23, \dots, 25]$, the value of $a$, $b(a)$ and $c(a)$ increases by a constant ``jump'', respectively $1$, $f=16$ and $-(f+1)=9$. This implies that the differences will also increase by a constant. By carefully choosing the start value on each subinterval it is possible to obtain the required values in ${\mathbb Z_{2m}}$. The value $m=13$ is boldfaced in the differences.

{\scriptsize
$$
\begin{array}{r|cccc|ccccccccc|cccccccccc|ccc}
&\multicolumn{4}{|c|}{I_1}&\multicolumn{9}{|c|}{I_2}&\multicolumn{10}{|c|}{I_3}&\multicolumn{3}{|c}{I_4}\\
\hline
 a  & 0 & 1 & 2 &
  3 & 4 & 5 &
  6 & 7 & 8 &
  9 & 10 & 11 &
  12 & 13 & 14 &
  15 & 16 & 17 &
  18 & 19 & 20 &
  21 & 22 & 23 &
  24 & 25 \\
 b(a)  & 13 & 3 &
  19 & 9 & 25 &
  15 & 5 & 21 &
  11 & 1 & 17 &
  7 & 23 & 2 &
  18 & 8 & 24 &
  14 & 4 & 20 &
  10 & 0 & 16 &
  6 & 22 & 12 \\
 c(a)  & 15 & 24 & 7 &
  16 & 12 & 21 &
  4 & 13 & 22 &
  5 & 14 & 23 &
  6 & 0 & 9 &
  18 & 1 & 10 &
  19 & 2 & 11 &
  20 & 3 & 25 &
  8 & 17 \\
 b(a)-a  & {\bf 13} & 2 & 17 & 6 &
  21 & 10 & 25 &
  14 & 3 & 18 &
  7 & 22 & 11 &
  15 & 4 & 19 &
  8 & 23 & 12 &
  1 & 16 & 5 &
  20 & 9 & 24 &
  {\bf 13}  \\
 c(a)-a  & 15 & 23 & 5 &
  {\bf 13} & 8 & 16 & 24 &
  6 & 14 & 22 &
  4 & 12 & 20 & {\bf 13} & 21 & 3 &
  11 & 19 & 1 &
  9 & 17 & 25 &
  7 & 2 & 10 &
  18     \\
 c(a)-b(a) & 2 & 21 & 14 &
  7 & {\bf 13} & 6 & 25 & 18 &
  11 & 4 & 23 &
  16 & 9 & 24 &
  17 & 10 & 3 &
  22 & 15 & 8 &
  1 & 20 & {\bf 13} & 19 & 12 &
  5 \\
\end{array}
$$
}
\end{example}

\begin{theorem} \label{QD(2m, 3)}
Let $f$ and $m$ be natural numbers such that
${\rm gcd}(f, 2m)=2$, ${\rm gcd}(f+2, 2m)=2$,
$f^2+f+1\equiv m\mod 2m$, and
$m+3\leq f\leq 2m-4.$
Then a cyclic DCA$(4, 2m+1;2m)$ satisfying P1 and P2 exists.
\end{theorem}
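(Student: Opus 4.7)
The proof will proceed by giving an explicit construction of the $2m \times 3$ array $Q=[q(i,j)]$ (to which we later append a trailing zero row and column to obtain the DCA$(4,2m+1;2m)$). The example with $m=13$, $f=16$ suggests the right piecewise formulas, so the plan is to formalize what the example exhibits.

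First, partition the row index set $[0,2m-1]$ into four intervals (working modulo $2m$): $I_1=[0,f-m]$, $I_2=[f-m+1,m-1]$, $I_3=[m,3m-f-1]$, $I_4=[3m-f,2m-1]$. The hypothesis $m+3 \leq f \leq 2m-4$ ensures each $I_j$ is nonempty. On each $I_j$ define $a\mapsto a$ (so column $0$ is the identity permutation), and define $b(a)$ and $c(a)$ to be arithmetic progressions with common differences $f$ and $-(f{+}1)$ respectively; the four starting values $b(0),b(f{-}m{+}1),b(m),b(3m{-}f)$ and the corresponding four values of $c$ will be chosen (following the example) so that the pieces glue together. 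Concretely, one picks starting values of the form $b(0)=m$ and $c(0)=f-1$ (up to unit adjustment), with the starts on $I_2,I_3,I_4$ determined by shifting by carefully selected residues, and then verifies the claims below.

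The main verifications are as follows. (i) \emph{Permutation columns.} Since $\gcd(f,2m)=2$, iterating $+f$ produces a progression lying in a single coset of the index-$2$ subgroup $2\mathbb{Z}_{2m}$; applying Lemma~\ref{basic} to each subinterval shows that the $b$-values on $I_1\cup I_2$ fill the even coset of some shift and the $b$-values on $I_3\cup I_4$ fill the odd coset, giving a permutation of $\mathbb{Z}_{2m}$. Because $f+1$ is odd and $\gcd(f+1,m)=1$ by (\ref{gcd f+1}), we have $\gcd(f+1,2m)=1$, so iterating $-(f{+}1)$ on the full index range already covers all of $\mathbb{Z}_{2m}$; checking that the four starting $c$-values of the respective arithmetic progressions produce disjoint residues completes the permutation property. (ii) \emph{Difference sets.} The three difference sequences are governed by jumps $f-1$, $-(f+2)$, and $-(2f+1)$. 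By (\ref{gcd f-1}), (\ref{gcd f+1}) applied to $f+2\equiv -(f-1)\cdot f^{-1}$-style manipulation, and (\ref{gcd 2f+1}), each of these jumps is coprime to $m$; combined with parity (each jump being even, odd, or odd respectively, matching $\gcd(\cdot,2m)\in\{1,2\}$), Lemma~\ref{basic} shows that on each subinterval the differences form a uniform progression. The starting differences are chosen so that $b(a)-a$, $c(a)-a$, and $c(a)-b(a)$ each take every value of $\mathbb{Z}_{2m}\setminus\{0\}$ exactly once, except the value $m$, which is attained twice (once at the end of $I_1\cup I_2$ and once at the end of $I_3\cup I_4$, corresponding to the identity $f^2+f+1\equiv m\pmod{2m}$). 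Identity (\ref{mf=0}) is what guarantees that the two occurrences of $m$ are the \emph{only} collisions: after the jump across a subinterval boundary, the accumulated shift is a multiple of $mf\equiv 0\pmod{2m}$.

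The hard part will be step (ii): showing that the starting values can be chosen so that the $m$-offsets produced by the hypothesis $f^2+f+1\equiv m\pmod{2m}$ sit precisely at the two boundary rows $a=0$ and $a=2m-1$ of each of the three difference sequences, and nowhere else. Once this is verified, properties P1 and P2 follow: P1 holds because column $k-1=3$ (appended as zeros) has all entries $0$, while the other columns are permutations of $\mathbb{Z}_{2m}$ and hence contain $0$ exactly once in the body plus once in the appended row; P2 holds because the Lemma~\ref{diff} analysis tells us the repeated non-zero difference is forced to be $m=n/2$, which is exactly what the construction produces. Appending the all-zero row and all-zero column recovers a cyclic DCA$(4, 2m+1; 2m)$ satisfying P1 and P2, completing the proof.
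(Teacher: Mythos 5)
Your setup is the paper's: the same four subintervals (your $[0,f-m]$, $[f-m+1,m-1]$, $[m,3m-f-1]$, $[3m-f,2m-1]$ are the paper's $I_1,\dots,I_4$ read modulo $2m$), the same column jumps $f$ and $-(f+1)$, the same difference jumps $f-1$, $-(f+2)$, $-(2f+1)$, and even the same seeds $b(0)=m$, $c(0)=f-1$. But the proposal stops exactly where the proof begins. You write that the starting values on $I_2,I_3,I_4$ are ``determined by shifting by carefully selected residues'' and that ``the hard part will be step (ii)'' --- yet exhibiting those constants and checking, for each of the five sequences $b$, $c$, $b-a$, $c-a$, $c-b$, that the pieces concatenate (in some reordering of the subintervals) into progressions whose values are exactly $\mathbb{Z}_{2m}$, respectively $(\mathbb{Z}_{2m}\setminus\{0\})\cup\{m\}$, \emph{is} the theorem. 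Nothing in the hypotheses guarantees in the abstract that consistent seeds exist; one must write them down (as in the paper's Figure giving $b(a)=af+m$ on $I_1\cup I_2$, $(a+1)f+m-1$ on $I_3\cup I_4$, and the four-case formula for $c$) and compute the boundary jumps.

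Moreover, the uniform picture you give of step (ii) is wrong for one of the three pairs, so executing your sketch as stated would fail. You claim each difference sequence is a union of two length-$m$ progressions with the repeated value $m$ appearing ``at the end of $I_1\cup I_2$ and at the end of $I_3\cup I_4$.'' That is roughly right for $b(a)-a$ (jump $f-1$, odd and coprime to $2m$; the repeat is at $a=0$ and $a=2m-1$, and $0$ is skipped by a double jump between $a=m-1$ and $a=m$). But for $c(a)-a$ the jump $-(f+2)$ has $\gcd(f+2,2m)=2$, so each piece is confined to a parity class; the paper must pair the intervals as $I_3\cup I_1$ (length $m+1$, odd values, forcing the single collision at the odd value $m$) against $I_4\cup I_2$ (length $m-1$, even values, omitting $0$), an asymmetric split your description does not accommodate. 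There are also small slips ($f-1$ is odd, not even, since $\gcd(f,2m)=2$; and $mf\equiv 0$ is what makes a length-$m$ progression with jump $f$ close up for the column $b$, not what rules out extra collisions in the difference multisets --- that is the job of the gcd identities (\ref{gcd f-1}), (\ref{gcd 2f+1}) and the coset counts). As it stands the proposal is a correct reverse-engineering of the construction from the worked example, but not a proof.
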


\begin{proof}
The proof is by construction with  the values of DCA$(4, 2m+1;2m)$ as given in Figure \ref{values-2m1}, with the 3 columns of $Q=[q(i,j)]$ given by $q(a,0)=a$, $q(a, 1)=b(a)$, $q(a, 2)=c(a)$. We will show that $Q$ has the required properties.

\begin{figure}

\begin{center}
{\scriptsize
\begin{tabular}{|c|c|c|c|c|}
\hline
Intervals& $I_1$&$I_2$&$I_3$&$I_4$\\
\hline
 $q(a,0)=a$& $[0,m+f]$&$[m+f+1,m-1]$&$[m,m-f-1]$&$[m-f, 2m-1]$\\\hline
$q(a, 1)=b(a)$  & $af+m $&$af+m $&$(a+1)f$&$(a+1)f$\\
  & &&$+m-1$&$+m-1 $\\\hline
$q(a, 2)=c(a)$  &$-(a-1)(f+1)  $&$-(a-1)(f+1)$&$-a(f+1) $&$-a(f+1)  $\\
 &$-2$&$+m-2$&$+m $&\\\hline \hline
$b(a)-a$&  $a(f-1) $&$a(f-1) $&$(a+1)(f-1)       $&$(a+1)(f-1)         $\\
&$+m $&$+m $&$+m       $&$+m         $\\
\hline
%& &&&\\
$c(a)-a$&$-(a-1)(f+2) $&$-(a-1)(f+2)$&$-a(f+2) $&$-a(f+2)$\\
&$-3$&$+m-3$&$+m $&$  $\\
\hline
$c(a)-b(a)$&$-a(2f+1)$&$-a(2f+1)$&$-a(2f+1) $&$-a(2f+1)  $\\
&$+f-1+m$&$+f-1$&$-(f-1) $&$-(f-1)+m  $\\
\hline
\end{tabular}
}
\end{center}
\caption{Entries are elements of ${\mathbb Z}_{2m}$, where $q(a,0)=a,$  $q(a, 1)=b(a)$ and $q(a, 2)=c(a)$ in the array $Q=[q(i,j)]$. Rows 5 to 7 give the differences.}\label{values-2m1}
\end{figure}

If $m+3\leq f\leq 2m-4$ and $f$ is even then, working modulo $2m$, $3\leq m+f\leq m-4$ and so   the intervals $[0,m+f]$ and $[m+f+1,m-1]$ are non-empty. Further, $m+4\leq m-f \leq 2m-3$ and so the intervals $[m,m-f-1]$ and $[m-f, 2m-1]$ are non-empty.

Given that  $f$ is even and $m$ is odd,  by Lemma \ref{basic}
$$ af+m\equiv 1\mod 2 \Longrightarrow \{b(a)\mid a\in I_1\cup I_2\}=\{2g+1\mid 0\leq g \leq m-1\},$$
$$(a+1)f+m-1\equiv 0\mod 2 \Longrightarrow \{b(a)\mid a\in I_3\cup I_4\}=\{2g\mid 0\leq g \leq m-1\},$$
and so $\{b(a)\mid 0\leq a\leq 2m-1\}=[2m]$.

On each of the subintervals $c(a)$ takes the form $ag+h$, where $g=-(f+1)$,  so the ``jump'' size is $-(f+1)$ and
$$\begin{array}{rcl}
c(m)&=&0,\\
c(m+f+1)-c(m-f-1)&=&%-(m+f+1-1)(f+1)+m-2+(m-f-1)(f+1)-m)\\
-m-f(f+1)+m-2+m\\
&&-f(f+1)-(f+1)-m\\
&=&-2f^2-2f-2-(f+1)=-(f+1),\\
c(0)-c(m-1)&=&%&f+1-2-(-(m-1-1)(f+1)+m-2)\\
%&=&f+1 +m-2(f+1)-m=
-(f+1),\\
c(m-f)-c(m+f)&=&%&-(m-f)(f+1)-(-(m+f-1)(f+1)-2)\\
%&=& -m+f(f+1) +m+f(f+1)-(f+1)+2\\
%&=&2f^2+2f+2-(f+1)=
-(f+1),\\
c(m)-c(2m-1)&=&%&0-(-(2m-1)(f+1))=
-(f+1).
\end{array}$$
Thus by reordering the subintervals as  $I_3,I_2,I_1,I_4$, and noting for instance, $c(m-f-1)-(f+1)=c(m+f+1)$, we get $\{c(a)\mid 0\leq a\leq 2m-1\}=[2m]$.

For $b(a)-a$, $c(a)-a$ and $c(a)-b(a)$ we are required to show that for $a\in[2m]$ the differences cover the multiset $\{1, 2,\dots,m-1,m,m,m+1,\dots, 2m-1\}=([2m]\setminus \{0\})\cup\{m\}$.

For $b(a)-a$, the subintervals are taken in natural order $I_1,I_2,I_3,I_4$. Starting at $a=0$ and finishing at $a=2m-1$, we have $b(0)-0=m=(2m-1+1)(f-1)+m= b(2m-1)-(2m-1)$, so the difference $m$ occurs twice.
Further, the ${\rm gcd}(f-1, 2m)=1$ implies  that $a(f-1)+m$,  $0\leq a\leq m-1$, are all distinct, as are $(a+1)(f-1)+m$, $m\leq a\leq 2m-1$ and
 \begin{align*}
 b(m)-m&=(m+1)(f-1)+m=f-1,\\
 b(m-1)-(m-1)&=(m-1)(f-1)+m=-(f-1).
\end{align*}
Thus there is a jump of $-2(f-1)$ between $a=m-1$ and $a=m$ and the difference 0 is omitted, implying  $\{b(a)-a\mid0\leq a\leq 2m-1\}=([2m]\setminus \{0\})\cup\{m\}$.

For $c(a)-a$, since $f+2$ is even and ${\rm gcd}(f+2,m)=1$,  these values are all distinct on each of the subintervals, $|I_3\cup I_1|=m+1$, $|I_4\cup I_2|=m-1$, and
\begin{align*}\hspace*{-1cm}
c(m)-m&=-m(f+2)+m=m,\\
c(m+f)-(m+f)&=-(m+f-1)(f+2)-3=m,\\
%&=-f(f+2)+(f+2)-3=m\\
(c(0)-0)-(c(m-f-1)-(m-f-1))&=%(f+2)-3-(-(m-f-1)(f+2)+m)=-(f+2),\\
m-f(f+2)-3=-(f+2),\\
c(2m-1)-(2m-1)&=-(2m-1)(f+2)=f+2,\\
c(m+f+1)-(m+f+1)&=%-(m+f+1-1)(f+2)+m-3
-f^2-2f+m-3
=-(f+2).
%c(2m-1)-(2m-1)-(c(m+f+1)-(m+f-1))&=2(f+2),
\end{align*}
Thus $-(a-1)(f+2)-3,-a(f+2)+m\equiv 1\mod 2$, hence,
$$\{c(a)-a\mid a\in I_3\cup I_1\}=\{2g+1\mid 0\leq g\leq m-1\}\cup\{m\}.$$
In addition, $-a(f+2)+m-1,-a(f+2)\equiv 0\mod 2$ implies that
$$\{c(a)-a\mid a\in I_4\cup I_2\}=\{2g\mid 1\leq g\leq m-1\},$$
giving $\{c(a)-a\mid0\leq a\leq 2m-1\}=([2m]\setminus \{0\})\cup\{m\}$.

For $c(a)-b(a)$, since  ${\rm gcd}(2f+1, 2m)=1$, these values are all distinct on the subintervals, $c(m+f+1)-b(m+f+1)=-2f^2-2f-2+m=m=m+2f^2+2f+2=
c(m-f-1)-b(m-f-1)$, and
\begin{align*}
c(0)-b(0)-(c(m-1)-b(m-1))&%=(f-1+m)-(-(m-1)(2f+1)+f-1)\\
=-(2f+1),\\
c(m+f)-b(m+f)&=%-(m+f)(2f+1)+f-1+m\\
%&=&-2f^2-1=-2(m-f-1)-1
2f+1,\\
c(m-f)-b(m-f)&%=&-(m-f)(2f+1)-(f-1)+m\\
%&=&2f^2+1=2(m-f-1)+1
=-(2f+1).
%(c(m-f)-b(m-f))-(c(m+f)+b(m-f))&=-2(2f+1)\\
%(c(m)-b(m))-(c(2m-1)-b(2m-1))&%=&-m(2f+1)-(f-1)-(-(2m-1)(2f+1)-(f-1)+m)\\
%&=&-m - f+1-2f-1+f-1-m
%= -(2f+1).
\end{align*}
Thus when the subintervals are reordered to $I_2,I_1,I_4,I_3$ we may verify that
 $\{c(a)-b(a)\mid 0\leq a\leq 2m-1\}=([2m]\setminus \{0\})\cup\{m\}$. Note that the values of $c(a)-b(a)$ start and finish on $m$ and the value $0$ is omitted between $a=m+f$ and $a=m-f$.
\end{proof}

\begin{corollary}\label{cor-2m}
Let $i \not\equiv 2\; \rm{mod}\; 3$ be a non-negative integer, $k = 2i^2 + 7i + 6$ and $m = 2k + 1$.
 Then there exists an infinite family of  cyclic DCA$(4, 2m+1;2m)$'s satisfying P1 and P2.
 \end{corollary}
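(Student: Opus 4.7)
The plan is to apply Theorem~\ref{QD(2m, 3)} with the explicit choice $f=4(i+2)^{2}$. Writing $m=4i^{2}+14i+13$ and $f=4i^{2}+16i+16$, the range bounds are immediate: $f-(m+3)=2i\ge 0$ and $(2m-4)-f=4i^{2}+12i+6>0$ for all $i\ge 0$. So I need only verify the three algebraic hypotheses $\gcd(f,2m)=2$, $\gcd(f+2,2m)=2$, and $f^{2}+f+1\equiv m\pmod{2m}$.

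For the congruence, I would set $\alpha=2i+4$, so that $f=2k+\alpha$ and $\alpha(\alpha-1)=(2i+4)(2i+3)=4i^{2}+14i+12=2k$. Expanding gives
\begin{equation*}
f^{2}+f+1-m=4k^{2}+4k\alpha+\alpha^{2}+\alpha,
\end{equation*}
and reducing modulo $2m=4k+2$ using $4k\equiv -2$ collapses this to $\alpha^{2}-\alpha-2k=\alpha(\alpha-1)-2k=0$. (The choice of $\alpha$ is forced by $8k+1=(4i+7)^{2}$, which is the only nonobvious arithmetic input: that $8k+1$ is a perfect square is precisely what lets us hit the congruence exactly rather than merely modulo $2m$.)

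For the gcd conditions, first note that $m$ is odd. Setting $j=i+2$, substitution yields $m=4j^{2}-2j+1\equiv 1\pmod j$, so $\gcd(j,m)=1$; since $f=4j^{2}$ and $m$ is odd this gives $\gcd(f,2m)=2\gcd(j^{2},m)=2$. For the harder condition on $f+2=2(2j^{2}+1)$, suppose an odd prime $p$ divides $\gcd(2j^{2}+1,m)$. Then $p\mid 2(2j^{2}+1)-(4j^{2}-2j+1)=2j+1$, and the identity $m=(2j+1)(2j-2)+3$ then forces $p\mid 3$. So $3$ is the only possible bad prime.

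The hypothesis $i\not\equiv 2\pmod 3$ is exactly what rules out $p=3$: if $i\equiv 0\pmod 3$ then $m\equiv 13\equiv 1\pmod 3$, while if $i\equiv 1\pmod 3$ then $j\equiv 0\pmod 3$ and so $2j^{2}+1\equiv 1\pmod 3$; in both cases $3\nmid\gcd(2j^{2}+1,m)$. (In the excluded case $i\equiv 2\pmod 3$ both $m$ and $2j^{2}+1$ are divisible by~$3$, which is precisely why that residue class must be omitted.) Thus $\gcd(f+2,2m)=2$, every hypothesis of Theorem~\ref{QD(2m, 3)} is met, and the theorem produces the required cyclic DCA$(4,2m+1;2m)$ satisfying P1 and P2. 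The only genuine obstacle is identifying the correct $f$; once $f=4(i+2)^{2}$ is guessed, the verifications are mechanical.
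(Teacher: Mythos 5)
Your proof is correct and takes essentially the same route as the paper: your choice $f=4(i+2)^2$ is exactly the paper's $f=m+3+2i$, and you verify the same hypotheses of Theorem~\ref{QD(2m, 3)} with only cosmetic differences in how the congruence and gcd computations are organized (your substitution $j=i+2$ and $\alpha=2i+4$ versus the paper's direct Euclidean reductions). Both arguments isolate $3$ as the only possible obstruction, which is precisely the source of the condition $i\not\equiv 2\pmod 3$.
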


\begin{proof}
Taking $f =  m + 3 + 2i$, then $f$ is even.
In addition $m + 3 \leq f \leq 2m -4 $, since $ i \geq 0$ and $2m - 4 = (m + 3) + (m - 7) = (m + 3) + ( 4i^2 + 14 i + 6 ) \geq m + 3 + 2i = f$. Now
\begin{align*}
f^2 + f + 1 %& = (m + 3 + 2i)^2 + (m + 3 + 2i) + 1\\
                   & \equiv 4i^2 + 14i + 13 \; \rm{mod}\; 2m\\
                & = 2(2i^2 + 7i + 6 ) + 1 = m \; \rm{mod}\; 2m.
\end{align*}
Further, applying Lemma \ref{basic0} repeatedly,
\begin{align*}
\gcd(f, 2m) &= 2(\gcd(2i^2+ 8i + 8, (2i^2+ 8i + 8) + 2i^2 + 6i + 5))\\
                %& = 2(\gcd((2i^2+ 6i + 5) + 2i + 3, 2i^2+ 6i + 5))\\
                 & = 2(\gcd(2i + 3, 2i^2+ 4i + 2 + (2i + 3) ))\\
                 & = 2(\gcd(2i + 3, 2(i + 1)^2))= 2(\gcd(2i + 3, i + 1))=2  \\
               % & = 2 (\gcd(i + 2 + i + 1, i + 1) )\\
               % & = 2(\gcd(i + 1 + 1, i + 1))=2
                %& = 2(\gcd( 1, i + 1 ))\\
                % = 2.
\end{align*}
Also,
\begin{align*}
\gcd(f + 2, 2m) &= 2(\gcd(2i^2+ 8i + 9, (2i^2+ 8i + 9) + 2i^2 + 6i + 4))\\
                % & = 2(\gcd((2i^2+ 6i + 4) + 2i + 5, 2i^2+ 6i + 4))\\
                % & = 2(\gcd(2i + 5, 2i^2+ 6i + 4 ))\\
                 & = 2(\gcd(2i + 5, 2(i + 2)(i + 1)))\\
                 & = 2(\gcd(2i + 5, (i + 2)(i + 1))).
\end{align*}
Now $\gcd(2i + 5, i + 2) = \gcd(2(i + 2) + 1, i + 2)=1.$
Whereas
\begin{align*}
  \gcd(2i + 5, i + 1) & = \gcd(2(i + 1) + 3, i + 1) )= \gcd(3, i + 1)\\
                            & \neq 1 \;\;\mbox{when} \;\; i + 1 \equiv 0\; \rm{mod}\; 3 \; \mbox{or equivalently}\;  i \equiv 2\; \rm{mod}\; 3.
\end{align*}

Thus taking $i \not\equiv 2 \; \rm{mod}\; 3$ we can construct a DCA$(4, 2m+1;2m)$ as per the Theorem \ref{QD(2m, 3)}.
\end{proof}

\subsection{Construction of difference covering arrays DCA$(4, 4m+1;4m)$}

In this subsection we give a general construction for a difference covering array DCA$(4, 4m+1;4m)$. It will be shown that for all non-negative integers $k$, such that $3 \!\nmid \!(2k + 1)$, this construction gives an infinite family of DCA$(4, 16k+9;16k+8)$. The proof that such a difference covering array exists uses the results presented in the following lemma. Note that in this section unless otherwise stated  all arithmetic is modulo $4m$.

\begin{lemma}\label{lem:gcd2}
Let $f$ and $m$ be natural numbers such that $m\equiv 2\mod 4$,
${\rm gcd}(f,$ \\$4m)=2$,
${\rm gcd}(f-1, 4m)=1$,
and
$f^2+f-2\equiv 2m\mod 4m$.
Then
\begin{align}
{\rm gcd}(2m+2-f, 4m)&=4,\label{gcd 2m+2-f}\\
{\rm gcd}(2m-f+1, 4m)&=1,\label{gcd 2m-f+1}\\
{\rm gcd}(2m-2f+2, 4m)&=2\label{gcd 2m-2f+2},\\
  mf&\equiv 2m\mod 4m.\label{gcd mf}
\end{align}
\end{lemma}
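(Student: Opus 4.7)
The plan is to reduce all four conclusions to a single explicit residue for $f$ modulo $4m$, after which each gcd becomes a one-line calculation.

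First I would rewrite the hypothesis $f^{2}+f-2\equiv 2m\pmod{4m}$ as $(f-1)(f+2)\equiv 2m\pmod{4m}$. Since $\gcd(f-1,4m)=1$, the element $f-1$ has an inverse $u\in\mathbb{Z}_{4m}^{\times}$; in particular $u$ is odd. Multiplying through gives $f+2\equiv 2mu\pmod{4m}$, and writing $u=1+2v$ collapses the right side: $2mu=2m+4mv\equiv 2m\pmod{4m}$. This yields the key identity
\[
f\equiv 2m-2\pmod{4m},
\]
which is the only substantive step in the proof; everything else is routine substitution.

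Plugging $f\equiv 2m-2$ into the first and fourth claims is immediate. For (2.4), $2m+2-f\equiv 4\pmod{4m}$, and since $4\mid 4m$, $\gcd(2m+2-f,4m)=\gcd(4,4m)=4$. For (2.7), $mf\equiv m(2m-2)=2m^{2}-2m\equiv -2m\equiv 2m\pmod{4m}$, because $m\equiv 2\pmod 4$ forces $m/2\in\mathbb{Z}$ so that $2m^{2}=4m\cdot(m/2)\equiv 0\pmod{4m}$.

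For (2.5) and (2.6) I first need the auxiliary fact $\gcd(3,m)=1$. Because $f-1\equiv 2m-3\pmod{4m}$ is odd, the hypothesis $\gcd(f-1,4m)=1$ reduces to $\gcd(2m-3,m)=\gcd(3,m)=1$. Then (2.5) follows from $2m-f+1\equiv 3\pmod{4m}$, giving $\gcd(2m-f+1,4m)=\gcd(3,4m)=1$. Finally, $2m-2f+2\equiv 6-2m=-2(m-3)\pmod{4m}$, and $\gcd\bigl(2(m-3),4m\bigr)=2\gcd(m-3,2m)$; using $2m\equiv 6\pmod{m-3}$ and $m-3$ odd, this reduces to $2\gcd(m-3,6)=2\gcd(m,3)=2$, proving (2.6).

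The only conceptual obstacle is recognising that the hypothesis $\gcd(f-1,4m)=1$ plays a dual role: it permits the inversion in the key step, and it simultaneously encodes the numerical constraint $3\nmid m$ that is needed to prove (2.5) and (2.6). Everything else is arithmetic, and one notes in passing that the assumption $\gcd(f,4m)=2$ is automatically forced by the derived congruence $f\equiv 2m-2\pmod{4m}$ together with $m\equiv 2\pmod 4$, so it need not be invoked explicitly.
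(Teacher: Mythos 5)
Your proof is correct, but it takes a genuinely different route from the paper's. The paper never solves for $f$: it proves each claim locally from the quadratic congruence, e.g.\ rewriting $2m+2-f\equiv f^2\pmod{4m}$ and using $\gcd(f,4m)=2$ to get $\gcd(f^2,4m)=4$, and handling the second and third claims by a common-odd-divisor argument ($x\mid 4m$ and $x\mid 2m-f+1$ force $x\mid m$, hence $x\mid f-1$, hence $x=1$). You instead factor $f^2+f-2=(f-1)(f+2)$, invert the unit $f-1$ (whose inverse is odd since $4m$ is even), and use $2mu\equiv 2m\pmod{4m}$ for odd $u$ to pin down $f\equiv 2m-2\pmod{4m}$; every conclusion then becomes a substitution, with the auxiliary fact $\gcd(3,m)=1$ extracted from $\gcd(2m-3,4m)=1$. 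Each step checks out. What your approach buys is structural: it shows the hypotheses determine $f$ uniquely modulo $4m$, which explains why Corollary \ref{cor-4m+1} takes exactly $f=2m-2$, and it exposes that the hypothesis $\gcd(f,4m)=2$ is redundant (it follows from $f\equiv 2m-2$ and $m\equiv 2\pmod 4$). The paper's argument is shorter per item and does not require noticing the factorization, but it hides this rigidity.
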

\begin{proof}

Eq \ref{gcd 2m+2-f}: \quad
Rewriting $2m+2-f = f^2+f-2+2-f = f^2 \mod 4m$ and assuming ${\rm gcd}(f, 4m)=2$ gives ${\rm gcd}(f^2, 4m)=4$.

Eq \ref{gcd 2m-f+1}: \quad Since $2m-f+1$ is odd,  the ${\rm gcd}(2m-f+1, 4m)$ is odd. Assume there exists an odd $x$ such that $x|4m$ and $x|(2m-f+1)$,  then $x|m$ and so $x|(f-1)$. But the ${\rm gcd}(f-1, 4m)=1$, so $x=1$.

Eq \ref{gcd 2m-2f+2}: \quad Assume that there exists $x$ such that $x|(m-f+1)$ and $x|2m$. Since $m-f+1$ is odd, $x$ is odd and so $x|m$. Consequently $x|(f-1)$ and $x|4m$, implying $x=1$.

Eq \ref{gcd mf}: \quad It follows that $  mf\equiv m(2m-f^2+2)= 2m^2-mf^2+2m\equiv 2m\mod 4m.$
\end{proof}

\begin{theorem} \label{QD(4m, 3)}
Let $f$ be a natural number and $m=4k+2$,  where $k$ is a non-negative integer, such that
${\rm gcd}(f, 4m)=2$,
${\rm gcd}(f-1, 4m)=1$,
and
$f^2+f-2\equiv 2m\mod 4m$.
Then a  cyclic DCA$(4, 4m+1;4m)$ satisfying P1 and P2 exists.
\end{theorem}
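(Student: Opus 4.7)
The plan is to mimic the construction used in Theorem \ref{QD(2m, 3)}, extending it to a domain of $4m$ elements. I would partition $\{0, 1, \ldots, 4m-1\}$ into four consecutive subintervals $I_1, I_2, I_3, I_4$ whose endpoints are linear expressions in $m$ and $f$, and on each subinterval define $b(a)$ and $c(a)$ as linear functions of the form $a \cdot g + h_\ell$, where the ``jump'' $g$ is the same across all four subintervals (one value for $b$, a different one for $c$), and only the offsets $h_\ell$ change when $a$ crosses an interval boundary. The columns of the array $Q$ would then be $q(a,0) = a$, $q(a,1) = b(a)$, $q(a,2) = c(a)$, exactly as in the earlier proof, and the task reduces to checking that each column is a permutation of $\mathbb{Z}_{4m}$ and that each of the three pairwise-difference sequences realises the multiset $(\mathbb{Z}_{4m}\setminus\{0\}) \cup \{2m\}$ (since for $n=4m$ the forced repeated difference from Lemma \ref{diff} is $n/2 = 2m$).

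The choice of jumps is dictated by Lemma \ref{lem:gcd2}. If $b$ has jump $f$ (so $b(a)-a$ has jump $f-1$), then $\gcd(f-1,4m)=1$ and Lemma \ref{basic} ensures $\{b(a)-a\}$ runs through all of $\mathbb{Z}_{4m}$ on a single arithmetic progression, with the repeat at $2m$ forced by $mf \equiv 2m \pmod{4m}$. For $c$, the natural candidate is a jump whose effect makes $c(a)-a$ have jump $2m-f+1$ (gcd $1$ with $4m$) and $c(a)-b(a)$ have jump $2m-2f+2$ (gcd $2$ with $4m$). Because $\gcd(f,4m)=2$, the sequence $\{b(a)\}$ itself only cycles through half of $\mathbb{Z}_{4m}$ on one progression, and likewise for $\{c(a)\}$ and for $c(a)-b(a)$; hence the interval $[0,4m-1]$ must be split into \emph{two} progressions of evens and odds via the offsets $h_\ell$ chosen on $I_1 \cup I_2$ versus $I_3 \cup I_4$, exactly mirroring the parity split in the proof of Theorem \ref{QD(2m, 3)}. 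Reordering the intervals when reading off $c(a)$ and $c(a)-b(a)$ (in the spirit of the $I_3,I_2,I_1,I_4$ re-ordering there) will glue these progressions into the full set $\mathbb{Z}_{4m}$.

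The main obstacle will be the combinatorial bookkeeping: locating the correct interval boundaries so that each linear piece wraps around $\mathbb{Z}_{4m}$ without collision, tuning the constants $h_\ell$ so that the progressions concatenate to give a permutation on each column and the ``missing'' difference is exactly $0$ while the ``doubled'' difference is exactly $2m$, and verifying that the hypothesis $m\equiv 2\pmod 4$ (together with $f^2+f-2\equiv 2m\pmod{4m}$) makes all of the consistency conditions hold at the boundary indices. I would organise the verification in a table analogous to Figure \ref{values-2m1}, listing the four intervals with the formulas for $b(a)$, $c(a)$, and each of the three differences, and then check in turn: (i) that each interval is non-empty under the constraint $m=4k+2$; (ii) that each of $b$, $c$ yields $\mathbb{Z}_{4m}$ via the two parity progressions using Lemma \ref{basic}; and (iii) that each of the three difference sequences has jumps with the gcds from Lemma \ref{lem:gcd2}, covers the correct progressions on each sub-union $I_1\cup I_2$ and $I_3\cup I_4$, and repeats precisely at $2m$ thanks to the identity $mf\equiv 2m \pmod{4m}$. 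The step I expect to be most delicate is identifying offsets that simultaneously realise all three required multisets for the differences without producing a spurious repeat outside $2m$.
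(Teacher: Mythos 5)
Your plan is the paper's own approach: partition $\mathbb{Z}_{4m}$ into four consecutive intervals, let $b$ and $c$ be piecewise arithmetic progressions with fixed jumps ($f$ for $b$, and a jump making $c(a)-a$ progress by $2m-f+1$ and $c(a)-b(a)$ by $2m-2f+2$), invoke Lemma \ref{lem:gcd2} for the gcd's, split into residue classes where the jump is not coprime to $4m$, and use $mf\equiv 2m \pmod{4m}$ to force the doubled difference to be $2m$. The paper does exactly this with the four equal intervals $[0,m-1]$, $[m,2m-1]$, $[2m,3m-1]$, $[3m,4m-1]$, with $b(a)=(a+1)f-1$ on the first two and $af$ on the last two, and $c(a)$ an arithmetic progression with jump $2m-f+2$ and four interval-dependent offsets.

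However, the proposal stops at the point where the actual mathematical content lies: you never exhibit the interval boundaries or the offsets $h_\ell$, and you explicitly flag their existence as ``the step I expect to be most delicate.'' The theorem is precisely the assertion that such constants exist, so a proof must display them and verify the three difference multisets; asserting that suitable offsets can be tuned is not a proof. There is also a concrete inaccuracy in the plan as stated: you say that, like $\{b(a)\}$, the sequence $\{c(a)\}$ cycles through \emph{half} of $\mathbb{Z}_{4m}$ per progression and hence needs a two-way parity split. In fact the relevant jump for the $c$ column satisfies $\gcd(2m-f+2,4m)=4$ (Eq.~\ref{gcd 2m+2-f} of Lemma \ref{lem:gcd2}), so each progression covers only a quarter of $\mathbb{Z}_{4m}$ and four offsets, one per interval and one per residue class modulo $4$, are needed — this is why the hypothesis $m\equiv 2\pmod 4$ enters, and a two-way split would leave half the symbols uncovered in the third column.
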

\begin{proof}

The proof is by construction with the values of DCA$(4, 4m+1;4m)$ as given in Figure \ref{values-4m2}, with the 3 columns of $Q=[q(i,j)]$ given by $q(a,0)=a$, $q(a, 1)=b(a)$, $q(a, 2)=c(a)$. We will show that $Q$ has the required properties.

\begin{figure}
\begin{center}
{\scriptsize
\begin{tabular}{|c|c|c|c|c|}
\hline
Intervals& $I_1$&$I_2$&$I_3$&$I_4$\\
\hline
 $q(a,0)=a$& $[0,m -1]$&$[m , 2m-1]$&$[2m, 3m -1]$&$[3m , 4m-1]$\\
\hline
$q(a, 1)=b(a)$  & $(a+1)f-1 $&$(a+1)f-1  $&$af$&$af $\\
\hline
$q(a, 2)=c(a)$  &$(a+1)(2m-f+2) $&$a(2m-f+2)$&$(a+1)(2m-f+2)$&$a(2m-f+2)$\\
  &$-1 $&$-m $&$+m  -1$&\\
\hline\hline
$b(a)-a$&  $(a+1)(f-1) $&$(a+1)(f-1)  $&$a(f-1)      $&$a(f-1)          $\\
\hline
$c(a)-a$&$(a+1)(2m-f+1) $&$a(2m-f+1) $&$(a+1)(2m-f+1) $&$a(2m-f+1) $\\
&&$-m $&$+m $&\\
\hline
$c(a)-b(a)$&$(a+1)(2m-2f+2)$&$a(2m-2f+2)$&$a(2m-2f+2)$&$a(2m-2f+2) $\\
&&$-f-m +1$&$+3m -f+1$&\\
\hline
\end{tabular}
}
\end{center}
\caption{Entries are elements of ${\mathbb Z}_{4m}$, where $q(a,0)=a,$  $q(a, 1)=b(a)$ and $q(a, 2)=c(a)$ in the array $Q=[q(i,j)]$.}\label{values-4m2}
\end{figure}

For  $b(a)$, since $f$ is even,  Lemma \ref{basic} implies that
  \begin{align*}
(a+1)f-1&\equiv 1\mod 2 \Longrightarrow \{b(a)\mid a\in I_1\cup I_2\}=\{2g+1\mid 0\leq g \leq 2m-1\},\\
af&\equiv 0\mod 2 \Longrightarrow  \{b(a)\mid a\in I_3\cup I_4\}=\{2g\mid 0\leq g \leq 2m-1\},
\end{align*}
and $\{b(a)\mid 0\leq a\leq 4m-1\} ={\mathbb Z}_{4m}$.

For $c(a)$, since ${\rm gcd}(2m-f+2, 4m)=4$, Lemma \ref{basic} implies that
$$\begin{array}{rl}
(a+1)(2m-f+2)+m -1\equiv 1 \mod 4 \Longrightarrow &\{c(a)\mid a\in I_3\}=\{4g+1\mid 0\leq \\
& g\leq m -1\},\\
a(2m-f+2)\equiv 0 \mod 4  \Longrightarrow &\{c(a)\mid a\in I_4\}=\{4g\mid 0\leq g\leq\\ & m -1\},\\
(a+1)(2m-f+2)-1\equiv 3 \mod 4  \Longrightarrow &\{c(a)\mid a\in I_1\}=\{4g+3\mid 0\leq \\ &g\leq m -1\},\\
a(2m-f+2)-m \equiv 2 \mod 4 \Longrightarrow &\{c(a)\mid a\in I_2\}=\{4g+2\mid 0\leq \\&g\leq m -1\}.
\end{array}$$
Thus the set of values $\{c(a)\mid a\in {\mathbb Z}_{4m}\}={\mathbb Z}_{4m}$.

For $b(a)-a$, $c(a)-a$ and $c(a)-b(a)$ we are required to show that for $a\in{\mathbb Z}_{4m}$ the  differences cover the multiset $\{1, 2,\dots, 2m-1, 2m, 2m, 2m+1,\dots, 4m-1\}=({\mathbb Z}_{4m}\setminus \{0\})\cup\{2m\}$.

For $b(a)-a$, the ${\rm gcd}(f-1, 4m)=1$, and
 \begin{align*}
b(2m)-2m & =2m(f-1)= 2m,\\
b(2m-1)-(2m-1) & =(2m-1+1)f-1-(2m-1)=(2m)(f-1)=2m,\\
 b(4m-1)-(4m-1)&= -(f-1),\\
 b(0)-0&=f-1.
 %b(0)-0-(b(4m-1)-(4m-1))&\equiv 2(f-1)
\end{align*}
So  using a ``jump'' of  $f-1$ and ordering the subintervals as $I_3,I_4,I_1,I_2$ we obtain the difference $2m$ twice and   the difference 0 is omitted between $a=4m-1$ and $a=0$ implying that  $\{b(a)-a\mid0\leq a\leq 4m-1\}=({\mathbb Z}_{4m}\setminus \{0\})\cup\{2m\}$.

For $c(a)-a$, the ${\rm gcd}(2m-f-1, 4m)=1$, and
$$\begin{array}{rcl}
c(m )-m  & = & m (2m-f+1)-m =2m,\\
c(3m -1)-(3m -1) & = &(3m -1+1)(2m-f+1)+m\\& =&2m,\\
c(3m )-3m -(c(2m-1)-(2m-1))&=&(m +1)(2m-f+1)+m \\ &=& 2m-f+1,\\
c(0)-0&=&2m-f+1,\\
c(4m-1)-(4m-1)&=&(4m-1)(2m+f-1)\\ &=&-(2m-f+1),\\
%c(0)-0-(c(4m-1)-(4m-1))&=&2(2m-f+1)\\
c(2m)-2m-(c(m -1)-(m -1))&=&(m +1)(2m-f+1)+m \\ &=& 2m-f+1.
\end{array}$$
So using a ``jump'' of  $2m-f+1$ and ordering the subintervals as $I_2,I_4,I_1,I_3$ we obtain the difference $2m$ twice and   the difference 0 is omitted between $a=4m-1$ and $a=0$, implying   $\{c(a)-a\mid0\leq a\leq 4m-1\}=({\mathbb Z}_{4m}\setminus \{0\})\cup\{2m\}$.

For $c(a)-b(a)$,   and
 \begin{align*}
c(3m )-b(3m ) & =m(2m-2f+2)=2m,\\
c(m-1)-b(m-1) & = (m-1+1)(2m-2+2)=2m,\\
c(4m-1)-b(4m-1)&=
%(4m-1)(2m-2f+2)=
-(2m-2f+2),\\
c(0)-b(0)&=2m-2f+2,\\
c(2m)-b(2m)-(c(2m-1)-b(2m-1))&=%2m(2m-2f+2)+3m -f+1\\
%&&-(2m-1)(2m-2f+2)+f+m -1\\
2m-2f+2.
\end{align*}
Then since
$$\begin{array}{rcl}
2m-2f-2\equiv 0\mod 2 \Longrightarrow \{c(a)-b(a)\mid a\in I_4\cup I_1\}&=&\{2g\mid 1\leq g \leq\\&& 2m-1\}\cup\{2m\}, \\
-f+1\equiv 1\mod 2 \Longrightarrow \{c(a)-b(a)\mid a\in I_2\cup I_3\}&=&\{2g+1\mid 0\leq g \leq \\&&2m-1\},
\end{array}$$
implying $\{c(a)-b(a)\mid0\leq a\leq 4m-1\}=({\mathbb Z}_{4m}\setminus \{0\})\cup\{2m\}$.
\end{proof}

\begin{corollary}\label{cor-4m+1}
For $k\geq 0$ such  that $k \not\equiv 1 \mod 3$ a cyclic DCA$(4, 4m+1;4m)$ satisfying P1 and P2
can be constructed as described in Theorem \ref{QD(4m, 3)}.
\end{corollary}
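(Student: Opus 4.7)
The plan is to exhibit a specific value of $f$ (as a function of $k$) that satisfies the three hypotheses of Theorem \ref{QD(4m, 3)} and then simply quote that theorem. Since $m = 4k+2$ and we are looking for an even $f$ with $\gcd(f,4m)=2$ satisfying the quadratic congruence $f^2+f-2 \equiv 2m \bmod 4m$, a natural first attempt is to try $f$ close to $2m$. Indeed, the key observation to make is that $f = 2m-2$ satisfies the quadratic congruence identically:
\[
f^2+f-2 = (2m-2)^2+(2m-2)-2 = 4m^2 - 6m \equiv -6m \equiv 2m \pmod{4m},
\]
so the third hypothesis of Theorem \ref{QD(4m, 3)} is automatic with this choice, for any $k \geq 0$.

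Next I would verify the two gcd conditions. For $\gcd(f,4m)=\gcd(2(m-1),4m)$, the standard reduction in Lemma \ref{basic0} gives $\gcd(m-1,2m) = \gcd(m-1,2)$, which equals $1$ because $m = 4k+2$ forces $m-1$ odd; hence $\gcd(f,4m)=2$ as required. For $\gcd(f-1,4m) = \gcd(2m-3,4m)$, another application of Lemma \ref{basic0} reduces this to $\gcd(2m-3,6)$; since $2m-3$ is odd this further reduces to $\gcd(2m-3,3)$. This is where the parity-type restriction on $k$ will enter: reducing modulo $3$ gives $2m-3 \equiv 2(4k+2) \equiv 2k+1 \pmod 3$, which is nonzero precisely when $k \not\equiv 1 \pmod 3$.

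Assembling these steps, under the hypothesis $k \not\equiv 1 \bmod 3$ all three conditions of Theorem \ref{QD(4m, 3)} hold for the choice $f = 2m-2$, so the theorem immediately yields a cyclic DCA$(4,4m+1;4m)$ satisfying P1 and P2. I do not anticipate a genuine obstacle here; the only creative step is guessing the correct family for $f$, which is motivated by the factorisation $f^2+f-2 = (f-1)(f+2)$ and the desire to have $f$ large enough that the congruence modulo $4m$ simplifies to a single linear term. The remaining work is purely verifying divisibility conditions via Lemma \ref{basic0}, which is routine.
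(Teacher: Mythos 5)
Your proposal is correct and takes essentially the same route as the paper: both choose $f=2m-2$ and verify the three hypotheses of Theorem \ref{QD(4m, 3)}, with the condition $k\not\equiv 1 \bmod 3$ emerging from $\gcd(f-1,4m)$ in exactly the same way. Your gcd reductions (working symbolically in $m$ via Lemma \ref{basic0} rather than substituting $k$ throughout) are a slightly cleaner presentation of the identical computation.
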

\begin{proof}
 Given $m=4k+2$, take $f=2m-2$. Then $f=8k+2$, and
\begin{align*}
 \gcd(f, 4m) & = 2(\gcd(4k + 1, 8k + 4)) = 2(\gcd(4k + 1, 2(4k + 1) + 2))= 2.
\end{align*}
In addition
\begin{align*}
\gcd(f-1, 4m) & = \gcd(2m - 3, 4m) = \gcd(8k + 1, 16k + 8)\\
                   %& = \gcd(8k + 1, 8(2k + 1)\\
                   & = \gcd(8k + 1, 2k + 1) \;\;\rm{since} \;\;2 \!\nmid\! (8k + 1)\\
                   & = \gcd(6k, 2k + 1)
                 % & = \gcd(3(2k), 2k + 1)\\
                   = 1, \;\rm{if}\; 3\!\nmid\! (2k + 1).
\end{align*}
 Also
\begin{align*}
f^2+f-2 & = 64k^2+32k+4+8k+2-2 = 4k(16k+8)+8k+4 \equiv 2m\; \rm{mod}\; 4m.
\end{align*}

Hence, $f=2m-2$ satisfies the assumptions of Theorem \ref{QD(4m, 3)} and we can construct a  DCA$(4, 16k + 9;16k+8)$ for $k$ such  that $3 \!\nmid \!(2k + 1)$ as described in this theorem.
\end{proof}

\subsection{Construction of difference covering arrays  DCA$(4, 2m+1;2m)$, where $m=3\mu + 2$}

In this subsection we give a general construction for a difference covering array DCA$(4, 2m+1;2m)$, where $m=3\mu + 2$. The proof that such a difference covering array exists uses the result presented in the following lemma. Note that in this section unless otherwise stated all arithmetic is modulo $12k+10$, $k\geq 0$.

\begin{theorem}\label{thm-6mu+4}
Let  $\mu$ be an odd positive integer.
Then there exists a cyclic DCA$(4, 6\mu+5;6\mu+4)$ satisfying P1 and P2.
\end{theorem}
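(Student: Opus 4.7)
The plan is to follow exactly the strategy used in Theorems \ref{QD(2m, 3)} and \ref{QD(4m, 3)}: write down an explicit piecewise-linear recipe for the three columns of the array, then verify that (a) each column is a permutation of $\mathbb{Z}_{2m}$ and (b) each of the three pairwise difference sets equals $(\mathbb{Z}_{2m}\setminus\{0\})\cup\{m\}$ counted with multiplicity. Here $m=3\mu+2$ and, since $\mu$ is odd, $m$ is odd; in particular $2m=6\mu+4$ has $\gcd(2,2m)=2$ and $\gcd(3,2m)=1$, which will constrain the admissible ``slopes'' in the construction.

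I would take $q(a,0)=a$ and define $q(a,1)=b(a)$, $q(a,2)=c(a)$ by partitioning $[0,2m-1]$ into a small number of subintervals $I_1,\dots,I_r$ and specifying, on each $I_j$, affine formulas $b(a)=\alpha_j a+\beta_j$, $c(a)=\gamma_j a+\delta_j$ whose slopes $\alpha_j,\gamma_j$ are chosen from a fixed small set of parameters (analogous to $f$ and $-(f+1)$ in Theorem \ref{QD(2m, 3)}) so that the relevant coprimality conditions modulo $2m$ force $b(a)-a$, $c(a)-a$, $c(a)-b(a)$ to be arithmetic progressions of full length on each subinterval. The intercepts $\beta_j,\delta_j$ are then chosen so that the images of $b$ and $c$ on distinct subintervals sit in distinct residue classes modulo the relevant gcd, and so that the three ``boundary values'' between consecutive subintervals fit together into a single progression of length $2m$.

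The verification then splits into two routine parts. First, for each of $b$ and $c$, Lemma \ref{basic} shows that the image on $I_j$ is an arithmetic progression of common difference $\gcd(\alpha_j,2m)$ (respectively $\gcd(\gamma_j,2m)$); by placing these progressions in disjoint residue classes modulo that gcd one gets $\{b(a)\mid a\in[0,2m-1]\}=\{c(a)\mid a\in[0,2m-1]\}=\mathbb{Z}_{2m}$. Second, for each of the three difference columns one computes slope and starting value on every subinterval (in a table analogous to Figure \ref{values-2m1}), observes that the differences are distinct within each subinterval, and checks that a suitable reordering of the subintervals concatenates them into an arithmetic progression covering $\mathbb{Z}_{2m}\setminus\{0\}$ together with one extra copy of $m$ sitting at the two endpoints of the reordered sequence, with $0$ as the unique omitted value.

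The main obstacle, and the only non-mechanical step, is the choice of slopes and intercepts that simultaneously satisfy all of the parity/gcd conditions and all three boundary matchings when $m=3\mu+2$ with $\mu$ odd. Unlike Theorem \ref{QD(2m, 3)}, where a single even $f$ with $f^2+f+1\equiv m\pmod{2m}$ drove everything, no such $f$ need exist when $m\equiv 2\pmod 3$, so one must use a different algebraic identity tailored to this residue class. I would try slopes of the form involving $2\mu+1$ or $3\mu+1$ (so that the gcd with $2m=6\mu+4$ is forced to be $1$ or $2$) and expect to need more subintervals than in the $2m$ case, with lengths depending on $\mu\bmod 4$. Once a correct piecewise formula is written down, comparing the values of the three differences at consecutive subinterval boundaries is entirely mechanical.
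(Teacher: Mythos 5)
There is a genuine gap: what you have written is a strategy, not a proof. You correctly identify that the whole content of the theorem is an explicit piecewise-affine construction together with mechanical verification, but you then defer exactly the step that carries all the mathematical content --- ``the choice of slopes and intercepts that simultaneously satisfy all of the parity/gcd conditions and all three boundary matchings'' --- and only offer a guess (``I would try slopes of the form involving $2\mu+1$ or $3\mu+1$ \dots and expect to need more subintervals''). Without the actual formulas there is nothing to check, and the verification you call routine cannot even begin. For comparison, the paper's proof produces the explicit data (Figure \ref{values-3}): six subintervals $I_1,\dots,I_6$ of $[0,6\mu+3]$, first column $a(\alpha)=3\alpha+\mathrm{const}$, second column $b(\alpha)=3\alpha(\mu+1)+\mathrm{const}$, third column $c(\alpha)=\alpha(3\mu+4)+5\mu+4$, giving difference slopes $3\mu$, $3\mu+1$ and $1$; the algebraic identity that drives it is $(3\mu+2)^2\equiv 3\mu+2 \pmod{6\mu+4}$, i.e.\ $(n/2)^2\equiv n/2$, together with $\gcd(3,6\mu+4)=\gcd(3\mu,6\mu+4)=\gcd(3\mu+4,6\mu+4)=1$. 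None of this is recoverable from your outline.

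A secondary point: your setup normalises the first column to the identity, $q(a,0)=a$, with $b$ and $c$ affine on \emph{intervals} of $a$-values, as in Theorems \ref{QD(2m, 3)} and \ref{QD(4m, 3)}. The paper's construction for $2m=6\mu+4$ does not have this form: all three columns are nontrivial piecewise-affine functions of the row index $\alpha$, and the subintervals on which $b$ and $c$ are affine map under $a(\cdot)$ to arithmetic progressions of common difference $3$, not to intervals. So even granting your template, it is not clear that a construction of the shape you describe exists, and you give no evidence that it does. To close the gap you would need to exhibit concrete formulas (in either normalisation) and carry out the boundary and gcd checks; as it stands the proposal does not establish the theorem.
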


\begin{proof}  Since $\mu\geq 1$, $n=6\mu+4\geq 10$, since $3$ is prime, ${\rm gcd}(3, 6\mu+4)=1$.
%From Lemma \ref{basic0},  ${\rm gcd}(6, 6\mu+4)=2$, hence ${\rm gcd}(3, 6\mu+4)\neq 3$, thus ${\rm gcd}(3, 6\mu+4)=1$.
%${\rm gcd}(3, 6\mu+4)={\rm gcd}(3\mu, 3\mu+2)={\rm gcd}(3\mu+1, 3\mu+2)={\rm gcd}(3\mu+3, 3\mu+2)={\rm gcd}(3\mu+4, 3\mu+2)=1$.
Hence ${\rm gcd}(3\mu+4, 6\mu+4)={\rm gcd}(3\mu, 6\mu+4)=1$.

Let $\mu=2k+1$, $k\geq 0$, then
\begin{align*}
\mu(3\mu+2)&=(2k+1)(6k+5)=3\mu+2,\mbox{ and}\\
(3\mu+2)^2&=3\mu+2.
\end{align*}
That is, $(n/2)^2\equiv n/2 \mod n$.

The proof is by construction with the values for DCA$(4, 6\mu+5;6\mu+4)$ as given  in Figure \ref{values-3}, with the three columns of  $Q=[q(i,j)]$  given by $q(\alpha,0)=a(\alpha)$, $q(\alpha, 1)=b(\alpha)$  and $q(\alpha, 2)=c(\alpha)$.

\begin{figure}
\begin{center}
\begin{tabular}{|c|c|c|c|}
\hline
Intervals for& $I_1$&$I_2$&$I_3$\\
 $\alpha$&$[0,\mu-1]$&$[\mu, 2\mu]$&$[2\mu+1, 3\mu+1]$\\
\hline
$q(\alpha,0)=a(\alpha)$&$3\alpha+3\mu+4$&$3\alpha+2$&$3\alpha  +3\mu+4$\\
\hline
$q(\alpha, 1)=b(\alpha)$&$3\alpha(\mu+1)+2\mu+2$&$3\alpha(\mu+1)+2\mu+2$ &$3\alpha(\mu+1)+2\mu+2$ \\
\hline
$q(\alpha, 2)=c(\alpha)$&$\alpha(3\mu+4) +5\mu+4$&$\alpha(3\mu+4) +5\mu+4$ &$\alpha(3\mu+4) +5\mu+4$\\
\hline\hline
$b(\alpha)-a(\alpha)$&$3\alpha\mu-\mu-2$&$3\alpha\mu+2\mu$&$3\alpha\mu-\mu-2$\\
\hline
$c(\alpha)-a(\alpha)$&$\alpha(3\mu+1)+2\mu$&$\alpha(3\mu+1) +5\mu+2$&$\alpha(3\mu+1) +2\mu$ \\
\hline
$c(\alpha)-b(\alpha)$&$\alpha+3\mu+2$&$\alpha+3\mu+2$&$\alpha+3\mu+2$\\
\hline
\hline
\hline
Intervals for& $I_4$&$I_5$&$I_6$\\
 $\alpha$&$[3\mu+2, 4\mu+2]$&$[4\mu+3, 5\mu+2]$&$[5\mu+3, 6\mu+3]$\\
\hline
$q(\alpha,0)=a(\alpha)$&$3\alpha+3\mu+3$&$3\alpha+1$&$3\alpha+3\mu+3$\\
\hline
$q(\alpha, 1)=b(\alpha)$&$3\alpha(\mu+1)+2\mu+1$&$3\alpha(\mu+1)+2\mu+1$ & $3\alpha(\mu+1)+2\mu+1$ \\
\hline
$q(\alpha, 2)=c(\alpha)$&$\alpha(3\mu+4) +5\mu+4$&$\alpha(3\mu+4) +5\mu+4$ &$\alpha(3\mu+4) +5\mu+4$ \\
\hline\hline
$b(\alpha)-a(\alpha)$&$3\alpha\mu-\mu-2$&$3\alpha\mu+2\mu$&$3\alpha\mu-\mu-2$\\
\hline
$c(\alpha)-a(\alpha)$&$\alpha(3\mu+1)+2\mu+1$&$\alpha(3\mu+1)+5\mu+3$&$\alpha(3\mu+1)+2\mu+1$
  \\
\hline
$c(\alpha)-b(\alpha)$&$\alpha+3\mu+3$&$\alpha+3\mu+3$&$\alpha+3\mu+3$\\
\hline
\end{tabular}
\end{center}
\caption{Entries are elements of ${\mathbb Z_{6\mu+4}}$, where $q(\alpha,0)=a(\alpha),$  $q(\alpha, 1)=b(\alpha)$ and $q(\alpha, 2)=c(\alpha)$ in the array $Q=[q(i,j)]$.}\label{values-3}
\end{figure}

Since ${\rm gcd}(3,n)=1$,   $\{3\alpha\mid 0\leq \alpha \leq n-1\}={\mathbb Z}_n$, by Lemma \ref{basic}.
Further $a(3\mu+2)=3(3\mu+2)+3\mu+3=1$ and $  a(2\mu)=6\mu+2$ and there is a ``jump'' of $3$ between  $a(4\mu+2)$ and $a(0)$;
$a(\mu-1)$ and $a(5\mu+3)$;  $a(6\mu+3)$ and $a(2\mu+1)$; $a(3\mu+1)$ and $a(4\mu+3)$;
 $a(5\mu+2)$ and $  a(\mu)$, respectively.
%\begin{align*}
%a(0)-a(4\mu+2)&=3\mu +4+3(4\mu+2)+3\mu+3\equiv 3\mod 6\mu+4\\
%a(5\mu+3)-a(\mu-1)&=&3(5\mu+3)+3\mu+3-(3(\mu-1)+3\mu+4)\equiv 3\mod 6\mu+4\\
% a(2\mu+1)-a(6\mu+3)&=&3(2\mu+1)+3\mu+4-(3(6\mu+3)+3\mu+3)\equiv 3\mod 6\mu+4\\
% a(4\mu+3)-a(3\mu+1)&=&3(4\mu+3)+1-(3(3\mu+1)+3\mu+4)\equiv 3\mod 6\mu+4\\
%  a(\mu)-a(5\mu+2)&=&3\mu+2-(3(5\mu+2)+1)\equiv 3\mod 6\mu+4\\
%  a(2\mu)&=&3(2\mu)+2=6\mu+2,
%  \end{eqnarray*}
Thus  reordering the subintervals as $I_4,I_1,I_6,I_3,I_5,I_2$ gives $\{a(\alpha)\mid 0\leq \alpha\leq  n-1\}={\mathbb Z}_n$.

For $b(\alpha)$,
$$\begin{array}{rl}
3\alpha(\mu+1)+2\mu+2\equiv 0 \mod 2 \Longrightarrow &
\{b(\alpha)\mid  \alpha \in I_1 \cup I_2 \cup I_3\}\\&=\{2g\mid 0\leq g\leq 3\mu+2\}\\
3\alpha(\mu+1)+2\mu+1\equiv 1 \mod 2 \Longrightarrow &
\{b(\alpha)\mid  \alpha \in I_4 \cup I_5 \cup I_6\}\\&=\{2g+1\mid 0\leq g\leq 3\mu+1\}
\end{array}$$

For $c(\alpha)$, since ${\rm gcd}(3\mu+4, 6\mu+4)=1$  Lemma \ref{basic} implies that
$\{c(\alpha)\mid 0\leq \alpha\leq 6\mu+3\}={\mathbb Z_{6\mu+4}}$.

For $b(\alpha)-a(\alpha)$,
since ${\rm gcd}(3\mu, 6\mu+4)=1$,
 \begin{align*}
b(\mu)-a(\mu) & =3\mu+2,\\
b(4\mu+2)-a(4\mu+2) & = 3\mu+2,\\
b(5\mu+3)-a(5\mu+3)-(b(2\mu)-a(2\mu))&=%3(5\mu+3)\mu-\mu-2-(3(2\mu)\mu+2\mu)\\
%&=&9\mu^2+6\mu-2=3\mu(3\mu+2)-2+3\mu^2\equiv
3\mu\\
%b(\mu-1)-a(\mu-1)&=% 3(\mu-1)\mu-\mu-2\equiv
%-3\mu\\
b(4\mu +3)-a(4\mu+3)-(b(\mu-1)-a(\mu-1))&=%& 3(4\mu+3)\mu+2\mu-(3(\mu-1)\mu-\mu-2)\\
 6\mu \\
b(2\mu+1)-a(2\mu+1)-(b(5\mu +2)-a(5\mu+2))&=%& 3(2\mu+1)\mu-\mu-2-(3(5\mu+2)\mu+2\mu)\\
%&=& -9\mu^2-6\mu-2\equiv
3\mu\\
b(0) - a(0) -(b(6\mu + 3) - a(6\mu + 3) & = 3\mu.
\end{align*}
So using a ``jump'' of  $3\mu$ and ordering the subintervals as $I_2,I_6,I_1,I_5,I_3, I_4,$ we obtain the difference $3\mu+2$ twice and  since there is $6\mu$ between $b(\alpha)-a(\alpha)$ for $\alpha=\mu-1$ and $\alpha=4\mu+3$ the difference 0 is omitted  implying that  $\{b(\alpha)-a(\alpha)\mid 0\leq \alpha \leq 6\mu+3\}=({\mathbb Z}_n\setminus \{0\})\cup\{n/2\}$.

For $c(\alpha)-b(\alpha)$, since ${\rm gcd}(3\mu+1, 6\mu+4)=2$, we have \\
 and
  \begin{align*}
c(5\mu+3)-a(5\mu+3) & = 3\mu+2,\\
c(2\mu)-a(2\mu) & = 3\mu+2,\\
c(3\mu+2)-a(3\mu+2)-(c(6\mu+3)-a(6\mu+3))&=%-(3\mu+2)(3\mu+1)-2\mu-1-(-(6\mu+3)(3\mu+1)-2\mu-1)\\
 -(3\mu+3)\\
c(\mu)-a(\mu)-(c(4\mu+2)-a(4\mu+2))&=%-\mu(3\mu+1)-5\mu-2-(-(4\mu+2)(3\mu+1)-2\mu-1)\\
-(3\mu+3)\\
c(2\mu+1)-a(2\mu+1)&=3\mu+1\\
c(5\mu+2)-c(5\mu+2)&=3\mu+3\\
c(0)-a(0)-(c(3\mu+1)-a(3\mu+1))&%=&-2\mu-(-(3\mu+1)(3\mu+1)-2\mu)\\
=-(3\mu+3)\\
c(\mu-1)-a(\mu-1)-(c(4\mu+3)-a(4\mu+3))&=%&-(4\mu+3)(3\mu+1)-5\mu-3)-(-(mu-1)(3\mu+1)-2\mu)\\
 -(3\mu+3).
\end{align*}
Reordering the intervals as $I_6,I_4,I_2$ and $I_3,I_1,I_5$ and  using a regular ``jump'' of $-(3\mu+3)$, with the jump of $6\mu+2$ between $\alpha=2\mu+1$ and $\alpha=5\mu+2$, being the exception, we have the difference $3\mu+2$ twice and the difference $0$ omitted, thus $\{c(\alpha)-a(\alpha)\mid 0\leq \alpha\leq 6\mu+3\}=({\mathbb Z_{6\mu+4}}\setminus\{0\})\cup \{3\mu+2\}$ with repetition retained.

For  $c(\alpha)-b(\alpha)$, we note that
\begin{align*}
c(0)-b(0)&= 3\mu+2,\\
c(3\mu+1)-b(3\mu +1)&=- 1,
\end{align*}
and so the values of $c(\alpha)-a(\alpha)$ on the subinterval $I_1\cup I_2\cup I_3$ cover the set $\{3\mu+2,\dots,-1\}$. Also
\begin{align*}
c(3\mu+2)-b(3\mu+2)&=1,\\
c(6\mu+3)-b(6\mu +3)&= 3\mu+2,
\end{align*}
and so the values of $b(\alpha)-c(\alpha)$ on the subinterval $I_4\cup I_5\cup I_6$ cover the set $\{3\mu+2,\dots, 6\mu-1\}$. Consequently  $\{b(\alpha)-c(\alpha)\mid 0\leq \alpha\leq 6\mu+3\}=([6\mu+4]\setminus\{0\})\cup \{3\mu+2\}$ with repetition retained.
\end{proof}

\subsection{Infinite families}
The construction of Theorem \ref{thm-6mu+4} constructs sets of three MNOLS of orders $10, 22, 34, 46 \mod 48$.  The construction of Corollary \ref{cor-4m+1} constructs sets of three MNOLS of orders $8, 40\mod 48$.
Combined with the constructions of \cite{DDK}, there is a construction of three MNOLS for $8, 10, 14, 22, 34,$
$38, 40, 46\mod 48$.  There are  infinite families constructed from Corollary \ref{cor-2m} and from results of Li and van Rees \cite{LvR}, but these cannot be described mod 48. It is an open question as to why 48 features in many of the constructions.

\end{document}